\iflatexml\providecommand{\headers}[2]{}
\providecommand{\funding}[1]{\\\textbf{Funding:} #1}
\providecommand{\newsiamremark}[2]{}
\providecommand{\apptocmd}[4]{}
\newenvironment{MSCcodes}{\textbf{MSC codes:}}{}
\newenvironment{DOI}{\textbf{DOI:}}{}
\let\orig@newsiamremark\newsiamremark{}
\renewcommand{\newsiamremark}[2]{%
  \orig@newsiamremark{#1}{#2}%
  \AddToHook{env/#1/begin}{\crefalias{theorem}{#1}}%
}%
\crefname{lemma}{Lemma}{Lemmata}
\newcommand{\bbone}{\boldsymbol{1}}
\newcommand{\C}{\mathbb{C}}
\newcommand{\R}{\mathbb{R}}
\newcommand{\V}{\mathrm{V}}
\newcommand{\D}{\mathrm{D}}
\DeclareMathOperator{\Div}{Div}
\DeclareMathOperator{\DIV}{DIV}
\renewcommand{\div}{\operatorname{div}}
\newcommand{\dev}{\operatorname{dev}}
\newcommand{\sym}{\operatorname{sym}}
\newcommand{\stf}{\operatorname{stf}}
\DeclareMathOperator{\Stf}{Stf}
\DeclareMathOperator{\Dev}{Dev}
\DeclareMathOperator{\Sym}{Sym}
\DeclareMathOperator{\tr}{tr}
\newcommand{\lebe}{\operatorname{L}}
\newcommand{\sobo}{\operatorname{W}}
\newcommand{\hil}{\operatorname{H}}
\newcommand{\norm}[1]{\left\lVert#1\right\rVert}
\newcommand{\skalarProd}[2]{#1\boldsymbol{\cdot}#2}
\newcommand{\abs}[1]{\lvert#1\rvert}
\title{
  Well-Posedness of the Linear Regularized 13-Moment Equations Using Tensor-Valued Korn Inequalities%
  \thanks{%
  This preprint reflects the status as of \today{} and is subject to change.
  Modifications resulting from the publishing process, such as peer review, editing, and formatting, may not be incorporated here. Please refer to the final published version for the most accurate content.%
  \funding{Supported by the German Research Foundation (DFG), project 442047500 (LT/MT).}%
  }%
}
\author{
Peter Lewintan\thanks{%
  Institute for Analysis, Karlsruhe Institute of Technology, Englerstr.\ 2, 76131 Karlsruhe, Germany (\email{peter.lewintan@kit.edu})
}
\and
Lambert Theisen\thanks{%
  Corresponding author. Applied and Computational Mathematics, RWTH Aachen University, Schinkelstr.\ 2, 52062 Aachen, Germany (\email{lambert.theisen@rwth-aachen.de}, \url{https://thsn.dev})
}
\and
Manuel Torrilhon\thanks{%
  Applied and Computational Mathematics, RWTH Aachen University, Schinkelstr.\ 2, 52062 Aachen, Germany (\email{mt@acom.rwth-aachen.de})
}
}
\begin{document}

\maketitle

\begin{abstract}
  In this paper, we finally prove the well-posedness of the linearized R13 moment model, which describes, e.g., rarefied gas flows. As an extension of the classical fluid equations, moment models are robust and have been frequently used, yet they are challenging to analyze due to their additional equations. By effectively grouping variables, we identify a 2-by-2 block structure, allowing us to analyze well-posedness within the abstract LBB framework for saddle point problems. Due to the unique tensorial structure of the equations, in addition to an interesting combination of tools from Stokes' and linear elasticity theory, we also need new coercivity estimates for tensor fields. These Korn-type inequalities are established by analyzing the symbol map of the symmetric and trace-free part of tensor derivative fields. Together with the corresponding right inverse of the tensorial divergence, we obtain the existence and uniqueness of weak solutions. This result also serves as the basis for future numerical analysis of corresponding discretization schemes.
\end{abstract}

\begin{keywords}
  regularized 13-moment equations, well-posedness, Korn inequalities, coercivity estimates, ellipticity, saddle point problem
\end{keywords}

\begin{MSCcodes}
  76P05, 65N30, 26D10, 35Q35, 35A23, 65K10, 35A01
\end{MSCcodes}

\begin{DOI}
  Preprint
\end{DOI}


\section{Introduction}\label{s_intro}
In this paper, we propose and analyze a mixed formulation for the linear regularized 13-moment equations (R13)~\cite{struchtrupRegularizationGrads132003} on bounded Lipschitz domains \(\Omega \subset \mathbb{R}^3\) with boundary \(\Gamma \coloneqq \partial \Omega\), i.e., $\Omega$ is an open, non-empty, connected set whose boundary $\Gamma$ can be locally expressed as the graph of a Lipschitz continuous function. The R13 equations are a system of partial differential equations that describe the evolution of the macroscopic quantities in, e.g., rarefied gases or microfluids. In these flows, the length of the mean free path between particle collisions becomes significant when compared to a reference length scale of the process. The ratio between these length scales is typically given by the \emph{Knudsen number} \(\operatorname{Kn} >0\), which serves as a parameter in the equations. Let \(m:\Omega \to \mathbb{R}\), \(\boldsymbol{b}:\Omega \to \mathbb{R}^3\), and \(r:\Omega \to \mathbb{R}\) be a mass source, a body force, and an energy source. Similar to the Stokes equations combined with Fourier's law of heat conduction, we are interested in the fields for the pressure \(p : \Omega \to \mathbb{R}\), the velocity \(\boldsymbol{u} : \Omega \to \mathbb{R}^3\), and the temperature \(\theta : \Omega \to \mathbb{R}\), given by
\begin{equation}\label{eq_balance1}
  \left\{
  \begin{aligned}
    \operatorname{div}\boldsymbol{u} &= m, \\
    \boldsymbol{\nabla} p + \operatorname{Div}\boldsymbol{\sigma} &= \boldsymbol{b}, \\
    \operatorname{div}\boldsymbol{u} + \operatorname{div}\boldsymbol{s} &= r,
  \end{aligned}
  \quad \textnormal{in } \Omega,
  \right.
\end{equation}
where we distinguish between \(\operatorname{div} \boldsymbol{u}: \Omega \to \mathbb{R}\), \(\operatorname{div} \boldsymbol{u} \coloneqq \sum_j \partial_j u_j\), and \(\operatorname{Div} \boldsymbol{\sigma}: \Omega \to \mathbb{R}^3, (\operatorname{Div} \boldsymbol{\sigma})_i \coloneqq \sum_j \partial_j \sigma_{ij}\), acting on vectors and matrices (row-wise), respectively. A detailed explanation of the notation is provided in \cref{subsec:notation}. Instead of using closure relations for the heat flux vector \(\boldsymbol{s} : \Omega \to \mathbb{R}^3\) and the symmetric and trace-free stress tensor \(\boldsymbol{\sigma} : \Omega \to \mathbb{R}^{3 \times 3}_{\operatorname{stf}} \coloneqq \{\boldsymbol{A} \in \mathbb{R}^{3 \times 3} : \boldsymbol{A}=\boldsymbol{A}^\top, \operatorname{tr}\boldsymbol{A}=0\}\) in \cref{eq_balance1}, we solve their corresponding evolution equations given by 
\begin{equation}\label{eq_balance2}
  \left\{
  \begin{aligned}
    \tfrac{4}{5} \operatorname{stf}\textnormal{D} \boldsymbol{s} + 2 \operatorname{stf} \textnormal{D} \boldsymbol{u}
     + \operatorname{DIV}\boldsymbol{m} &= - \tfrac{1}{\operatorname{Kn}} \boldsymbol{\sigma},
    \\
    \tfrac{5}{2} \boldsymbol{\nabla} \theta + \operatorname{Div} \boldsymbol{\sigma} + \tfrac{1}{2} \operatorname{Div} \boldsymbol{R} + \tfrac{1}{6} \boldsymbol{\nabla} \triangle &= - \tfrac{1}{\operatorname{Kn}} \tfrac{2}{3} \boldsymbol{s},
  \end{aligned}
  \quad \textnormal{in } \Omega.
  \right.
\end{equation}
In \cref{eq_balance2}, \(\boldsymbol{m} : \Omega \to \mathbb{R}^{3 \times 3 \times 3}_{\operatorname{Stf}}\), \(\boldsymbol{R}: \Omega \to \mathbb{R}^{3 \times 3}_{\operatorname{stf}}\), and \(\triangle : \Omega \to \mathbb{R}\) are the \emph{highest-order moments}; \((\operatorname{DIV} \boldsymbol{m})_{ij} \coloneqq \sum_{k} \partial_k m_{ijk}\) defines the matrix field \(\operatorname{DIV} \boldsymbol{m}: \Omega \to \mathbb{R}^{3 \times 3}\); and \(\operatorname{stf}: \mathbb{R}^{3 \times 3} \to \mathbb{R}^{3 \times 3}_{\operatorname{stf}}\) with \(\operatorname{stf}\boldsymbol{A} \coloneqq \tfrac{1}{2}(\boldsymbol{A}+\boldsymbol{A}^\top) - \tfrac{\operatorname{tr} \boldsymbol{A}}{3} \bbone_3\) returns the symmetric, trace-free part of a matrix.  
For the highest-order moments, we use the regularized closure relations given by
\begin{equation}\label{eq_closures}
  \boldsymbol{m} = - 2 \operatorname{Kn} \operatorname{Stf} \textnormal{D} \boldsymbol{\sigma}
  ,\quad
  \boldsymbol{R} = - \tfrac{24}{5} \operatorname{Kn} \operatorname{stf}\textnormal{D} \boldsymbol{s}
  ,\quad
  \triangle = - 12 \operatorname{Kn} \operatorname{div} \boldsymbol{s}
  ,
\end{equation}
where \(\textnormal{D} \boldsymbol{\sigma}: \Omega \to \mathbb{R}^{3 \times 3 \times 3}\) is defined by \((\textnormal{D} \boldsymbol{\sigma})_{ijk} \coloneqq \partial_k \sigma_{ij}\), and \(\operatorname{Stf}: \mathbb{R}^{3 \times 3 \times 3} \to \mathbb{R}^{3 \times 3 \times 3}_{\operatorname{Stf}}\) by \((\operatorname{Stf} \boldsymbol{T})_{ijk} \coloneqq T_{(ijk)} - \tfrac{1}{5}(T_{(ill)} \delta_{jk} + T_{(ljl)} \delta_{ik} + T_{(llk)} \delta_{ij})\)~\cite{struchtrupMacroscopicTransportEquations2005}, where, with \(S_3\) denoting the symmetric group, \(T_{(ijk)} \coloneqq \frac{1}{6} \sum_{\pi \in S_3} T_{\pi(i)\pi(j)\pi(k)}\) defines the symmetric part of \(\boldsymbol{T}\). The coefficients in \cref{eq_balance2,eq_closures}, which may appear unconventional at first glance, result from the assumption of so-called \emph{Maxwell molecules}\footnote{See, e.g.,~\cite{struchtrupRegularized13Moment2013} for other molecular potentials that yield different coefficients.} in the underlying kinetic derivation (see the next \cref{subsec:sota}) and are, therefore, part of the model.
\par
With the boundary-aligned moving $3$-frame \((\boldsymbol{t}_1,\boldsymbol{t}_2,\boldsymbol{n})\) (i.e., unit outer normal \(\boldsymbol{n} \in \mathbb{R}^3\) and \(\boldsymbol{t}_1, \boldsymbol{t}_2 \in \mathbb{R}^3\) spanning the tangential plane), let \(u_n \coloneqq \sum_{i} u_i n_i\), \(R_{nn} \coloneqq \sum_{i,j} R_{ij} n_i n_j\), \(m_{nnn} \coloneqq \sum_{i,j,k} m_{ijk} n_i n_j n_k\), and other components be defined analogously. Then, we use Onsager boundary conditions (BCs) (see~\cite{ranaThermodynamicallyAdmissibleBoundary2016} with the thermodynamic adaptation made in~\cite{torrilhonHierarchicalBoltzmannSimulations2017}) given by  
\begin{equation}\label{bcs_3d}
\begin{aligned}
  \left(u_n - u_n^{\mathrm{w}}\right) &= \epsilon^{\mathrm{w}} \tilde{\chi} \left( (p-p^{\mathrm{w}}) + \sigma_{nn} \right),
  \\
  \sigma_{nt_{i}} &= \tilde{\chi} \left( (u_{t_i}-u_{t_i}^{\mathrm{w}}) + \tfrac{1}{5} s_{t_i} + m_{nnt_{i}} \right) \quad (i \in \{1,2\}),
  \\
  R_{nt_{i}} &= \tilde{\chi} \left( -(u_{t_i}-u_{t_i}^{\mathrm{w}}) + \tfrac{11}{5} s_{t_i} - m_{nnt_{i}} \right) \quad (i \in \{1,2\}),
  \\
  s_n &= \tilde{\chi} \left( 2(\theta-\theta^{\mathrm{w}}) + \tfrac{1}{2} \sigma_{nn} + \tfrac{2}{5} R_{nn} + \tfrac{2}{15} \triangle \right),
  \\
  m_{nnn} &= \tilde{\chi} \left( - \tfrac{2}{5} (\theta-\theta^{\mathrm{w}}) + \tfrac{7}{5} \sigma_{nn} - \tfrac{2}{25} R_{nn} - \tfrac{2}{75} \triangle \right),
  \\
  \left( \tfrac{1}{2} m_{nnn} + m_{nt_1t_1} \right) &= \tilde{\chi} \left( \tfrac{1}{2} \sigma_{nn} + \sigma_{t_1t_1} \right),
  \\
  m_{nt_1t_2} &= \tilde{\chi} \sigma_{t_1t_2}
  ,
\end{aligned}
\end{equation}
in which \(u_n^{\mathrm{w}}, u_{t_1}^{\mathrm{w}}, u_{t_2}^{\mathrm{w}}, p^{\mathrm{w}}, \theta^{\mathrm{w}} \in \mathbb{R}\) are the velocity components, the pressure, and the temperature on \(\Gamma\), \(\epsilon^{\mathrm{w}} \ge 0\)\label{ieq:epsilonw} is a parameter controlling the velocity prescription strength, and \(\tilde{\chi} > 0\) is the modified accommodation factor~\cite{westerkampFiniteElementMethods2019}. The (linear) Onsager BCs~\cref{bcs_3d} are the moment-level counterpart~\cite{torrilhonBoundaryConditionsRegularized2008} of the \emph{Maxwell accommodation} boundary model from kinetic theory: they relate normal fluxes at the wall (e.g., \(u_n,\sigma_{nt_i},R_{nt_i},s_n,m_{nnn},\dots\)) to the corresponding thermodynamic driving forces (jumps between gas and wall state) through a symmetric, positive map, consistent with the \emph{Curie principle} (only quantities of equal tensorial rank are coupled). In this form, the boundary entropy production is non-negative, so the second law of thermodynamics is respected; see~\cite{ranaThermodynamicallyAdmissibleBoundary2016,struchtrupTheoremRegularizationBoundary2007}. While the concrete coefficients depend on the molecular and accommodation models, other thermodynamically admissible (possibly simplified) boundary closures could change the coefficients and/or the coupling structure.
\par
Eliminating the highest-order moments by inserting the closures \cref{eq_closures} into \cref{eq_balance1,eq_balance2}, while prescribing \cref{bcs_3d}, yields the R13 boundary-value problem, which we analyze in this work.  

\vspace{0.07cm}
\subsection{State of the art, context, and our contribution}\label{subsec:sota}
We start with an example that motivates the modeling of rarefied gases in bounded domains \textendash\ namely, thermal transpiration-based gas pumps which use a naturally occurring zeolite~\cite{GuptaGianGasPump} to generate a gas flow. The nanopores in this mineral enable free-molecular flow even at atmospheric pressure. Unlike in the continuum gas flow regime, in the free-molecular regime, the gas molecules bounce against the channel walls much more frequently than they bounce against each other. Under these conditions, wall interactions dominate, causing the molecules to drift from the cold end to the warm end; see~\cite{GuptaGianGasPump} for more details. The main advantage of this pump is that it has no moving parts and solely relies on temperature differences. Hence, it could provide reliable, precise control of gas flow for a variety of applications, such as gas-sensing breath analyzers or in satellite control; see also~\cite{SatelliteControl} for the latter. For further examples and applications of the so-called Knudsen pumps, we refer the reader to, e.g.,~\cite{KnudsenPump1,KnudsenPump2} and the references therein.
\par
At a high level, the system \cref{eq_balance1,eq_balance2,eq_closures,bcs_3d} results from multiple modeling and approximation steps. Starting from kinetic theory in the \(6\)-dimensional phase space, a Galerkin approximation to a lower-dimensional subspace yields a set of nonlinearly coupled, time-dependent moment equations~\cite{abdelmalikMomentClosureApproximations2016}. Using a Hermite basis~\cite{gradKineticTheoryRarefied1949} leads to such a model hierarchy (including, e.g., the R13, R26, and G45 models~\cite{ranaHtheoremBoundaryConditions2021,struchtrupRegularizationGrads132003,guHighorderMomentApproach2009}) that allows for systematic model improvement while still recovering the well-established Navier--Stokes and Fourier models in the limit \(\operatorname{Kn} \to 0\)~\cite{torrilhonModelingNonequilibriumGas2016,bungerStructuredDerivationMoment2023}. However, with an increased number of moments, the number of solution variables \textendash\ and at the same time \textendash\ the tensorial rank increases, and thus the complexity of the equations. Nevertheless, especially in the moderate Knudsen number regime, moment models have been shown to be an essential tool for simulating \textit{rarefaction effects}, i.e., phenomena that cannot be captured by the classical Navier--Stokes--Fourier equations~\cite{torrilhonModelingNonequilibriumGas2016}.
\par
The R13 model is a good compromise between accuracy and complexity~\cite{torrilhonModelingNonequilibriumGas2016}, and the inflow, outflow, and thermal boundary conditions enable the study of a wide range of applications, including microchannel flows~\cite{caiLinearRegularized13Moment2024,taheriCouettePoiseuilleMicroflows2009,linTimedependentRegularised13moment2025}, lid-driven cavities~\cite{ranaRobustNumericalMethod2013}, flow past an obstacle~\cite{westerkampFiniteElementMethods2019}, thermally induced flows~\cite{himanshiGeneralizedFundamentalSolution2026}, thermal edge flows~\cite{theisenFenicsR13TensorialMixed2021}, and phase transitions with moving interfaces~\cite{wenigerConformingInterfaceApproach2025}. However, exact solutions are known only for certain domains~\cite{martinAcousticScatteringRarefied2020,caiLinearRegularized13Moment2024,taheriCouettePoiseuilleMicroflows2009}. Numerical solutions, on the other hand, have been obtained, e.g., by using finite difference (FD) schemes~\cite{ranaRobustNumericalMethod2013}, finite volume (FV) methods~\cite{guComputationalStrategyRegularized2007}, discontinuous Galerkin (DG) approaches~\cite{torrilhonHierarchicalBoltzmannSimulations2017}, the method of fundamental solution (MFS)~\cite{lockerbyFundamentalSolutionsMoment2016}, and finite element methods (FEM)~\cite{westerkampContinuousInteriorPenalty2017,westerkampFiniteElementMethods2019,theisenFenicsR13TensorialMixed2021}. For the latter, the open-source solver developed in~\cite{theisenFenicsR13TensorialMixed2020} can simulate the model equations of this work, i.e., the steady-state, linearized, and dimensionless case in three dimensions on arbitrary geometries. However, even with these simplifications, the mathematical analysis of the R13 equations still lags, and even fundamental questions remain unclear: \emph{Do the R13 equations even have a weak solution?} If so, \emph{is this weak solution unique and continuously dependent on the data?} In this work, we answer both questions affirmatively, meaning that the equations are well-posed (see \cref{thm:mainR13}). The key idea is to consider the weak formulation within the abstract-saddle-point framework~\cite{brezziMixedHybridFinite1991} of mixed formulations. This well-posedness result is not only interesting in itself but also serves as the foundation for numerical efforts.
\par
The analysis requires new theoretical tools due to the equations' increased tensor rank. We combine estimates from the context of Stokes' equations, Poisson's equation, and, interestingly, linear elasticity. In particular, by applying the Nečas framework~\cite{necasNormesEquivalentesDans1966}, we will derive tensor-valued Korn inequalities that can be used for coercivity estimates in matrix equations (see our \cref{lem:CelliptSTF,lem:tensorKorn}). Indeed, Korn's inequalities are the key tool in (linear) elasticity and fluid mechanics, since they provide the necessary coercivity of the arising bilinear forms and yield well-posedness and regularity results; cf.~\cite{kornUberEinigeUngleichungen1909, horganKornsInequalitiesTheir1995, ciarletKornsInequality2010,duvautInequationsMecaniquePhysique1972} for an incomplete list and the discussion in the \cref{sec:Korn} below.
Korn inequalities have been generalized to many different contexts, e.g., to the geometrically nonlinear counterpart~\cite{frieseckeTheoremGeometricRigidity2002}, to the case of non-constant coefficients~\cite{neffKornsFirstInequality2002,pompeKornsFirstInequality2003}, or to the case of incompatible tensor fields~\cite{lewintanKornInequalitiesIncompatible2021,gmeinederKornMaxwellSobolevInequalities2021,gmeinederKornMaxwellSobolev2024,gmeinederLimitingKornMaxwellSobolevInequalities2024}; cf.\ also with the references in~\cite{lewintanKornInequalitiesIncompatible2021, gmeinederKornMaxwellSobolev2024}.
In particular, Korn--Maxwell--Sobolev, i.e., Korn inequalities for incompatible tensor fields, have been derived in the framework of gradient plasticity with plastic spin~\cite{garroniGradientTheoryPlasticity2010} or in extended continuum-type models like the relaxed micromorphic model~\cite{neffUnifyingPerspectiveRelaxed2014}, again to prove coercivity-type estimates.

\subsection{The connection to Stokes' and Poisson's problem}\label{subsec:stokes_poisson_connection}
Since the R13 equations \cref{eq_balance1,eq_balance2,eq_closures} appear quite complicated, we would like to explain their structure in further detail. In fact, the R13 equations are an extension to the classical Stokes and Poisson problems coupled together with additional moment terms that are higher-order in the Knudsen number \(\operatorname{Kn}\).
To see that, we set \(m=0\) in~\cref{eq_balance1}, such that the velocity field \(\boldsymbol{u}\) is divergence-free, and the heat equation simplifies to \(\operatorname{div} \boldsymbol{s} = r\). A multiplication of the two equations in \cref{eq_balance2} by \(\operatorname{Kn}\), while using that the all higher-order moments from \cref{eq_closures} are \(\mathcal{O}(\operatorname{Kn})\), yields \(\boldsymbol{\sigma} = - \operatorname{Kn} (2 \operatorname{stf} \textnormal{D} \boldsymbol{u} + \tfrac{4}{5} \operatorname{stf} \textnormal{D} \boldsymbol{s} + \mathcal{O}(\operatorname{Kn}))\) and \(\boldsymbol{s} = - \tfrac{3}{2}\operatorname{Kn} (\tfrac{5}{2} \boldsymbol{\nabla} \theta + \operatorname{Div} \boldsymbol{\sigma} + \mathcal{O}(\operatorname{Kn}))\). These two relations, in turn, imply that \(\boldsymbol{\sigma}\) and \(\boldsymbol{s}\), and therefore, also \(\operatorname{Div} \boldsymbol{\sigma}\) and \(\operatorname{stf} \textnormal{D} \boldsymbol{s}\) are of order \(\mathcal{O}(\operatorname{Kn})\), such that the coupling between \(\boldsymbol{\sigma}\) and \(\boldsymbol{s}\) is of order \(\mathcal{O}(\operatorname{Kn}^2)\) in total. Thus, the R13 equations further simplify to
\begin{equation}\label{eq_stokes_and_poisson_first_order}
  \left\{
  \begin{aligned}
    \boldsymbol{\nabla} p + \operatorname{Div} \boldsymbol{\sigma} &= \boldsymbol{b}
    , \\
    \operatorname{div} \boldsymbol{u} &= 0
    , \\
    \mathcal{O}(\operatorname{Kn}^2) + \boldsymbol{\sigma} &= -2 \operatorname{Kn} \operatorname{stf} \textnormal{D} \boldsymbol{u}
    ,
  \end{aligned}
  \right.
  \quad
  \left\{
    \begin{aligned}
      \operatorname{div} \boldsymbol{s} &= r
      , \\
      \mathcal{O}(\operatorname{Kn}^2) + \boldsymbol{s} &= - \tfrac{15}{4} \operatorname{Kn} \boldsymbol{\nabla} \theta
      .
    \end{aligned}
  \right.
\end{equation}
After neglecting the \(\mathcal{O}\left(\operatorname{Kn}^2\right)\)-terms in \cref{eq_stokes_and_poisson_first_order}, we obtain the Stokes problem in first-order form with the Navier--Stokes law as closure for \(\boldsymbol{\sigma}\) as well as the Poisson (or Darcy) problem with Fourier's law as closure for \(\boldsymbol{s}\). Note that for the Stokes problem, with \(\operatorname{div}\boldsymbol{u} = 0\), we can simplify\footnote{Using \(\operatorname{Div}\operatorname{stf} \textnormal{D} \boldsymbol{u} = \operatorname{Div}[\tfrac{1}{2} \text{D} \boldsymbol{u} + \tfrac{1}{2} (\text{D} \boldsymbol{u})^{\top} - \tfrac{\operatorname{div} \boldsymbol{u}}{3}\bbone_3]\), \(\operatorname{Div} [(\text{D} \boldsymbol{u})^{\top}] = \boldsymbol{\nabla} (\operatorname{div} \boldsymbol{u})\), and \(\operatorname{Div} \text{D}\boldsymbol{u} = \Delta \boldsymbol{u}\).} \(-2 \operatorname{Div}\operatorname{stf} \textnormal{D} \boldsymbol{u} = - \Delta \boldsymbol{u}\), resembling the classical notation. On the boundary \(\Gamma\), with \(\tilde{\chi} = 1\) for simplicity, taking only a subset of the full BCs \cref{bcs_3d} as
\begin{equation}\label{bcs_3d_subset_stokes}
  \begin{aligned}
    \left(u_n - u_n^{\mathrm{w}}\right) = \epsilon^{\mathrm{w}} \left((p-p^{\mathrm{w}}) + \sigma_{nn}\right)
    ,
    \quad \textnormal{ and } \quad
    \sigma_{nt_{i}} = u_{t_i}-u_{t_i}^{\mathrm{w}} \quad (i \in \{1,2\})
    ,
  \end{aligned}
\end{equation}
for the Stokes problem yields a slip (or no-slip if \(\epsilon^{\textnormal{w}} = 0\)) condition for \(u_n\) and a (Navier- or friction-type) slip condition for the tangential stress. For the Poisson problem, the subset
\begin{equation}\label{bcs_3d_subset_poisson}
  \begin{aligned}
    s_n &= 2(\theta-\theta^{\mathrm{w}})
    ,
  \end{aligned}
\end{equation}
of \cref{bcs_3d} resembles a Robin-type condition. In short, the R13 equations extend the classical Stokes and Poisson problems to cases where the \(\mathcal{O}(\operatorname{Kn}^2)\)-contributions are non-negligible and further coupling effects in \(\Omega\), as well as on \(\Gamma\), play a role.

\subsection{Additional notation}\label{subsec:notation}
We will denote by $\skalarProd{}{}$, $ \boldsymbol{:}$, and $\pmb{\because}$ the scalar product of vectors, matrices, and $3$-tensors, respectively. We annotate all scalar products and norms if they are not clear from the context (e.g., \(\|a\|\) denotes the operator norm of the bilinear form $a$). To emphasize the slightly different action of the divergence operator on vector, matrix, and $3$-tensor fields, we will use the notations $\div$, $\Div$, and $\DIV$, respectively. However, the derivative (the Jacobian matrix) will always be denoted by $\D\cdot=\cdot\otimes\boldsymbol{\nabla}$. For a square matrix, $\boldsymbol{P}\in\R^{3\times3}$, we denote by $\boldsymbol{P}^\top$ its transpose, by $\tr \boldsymbol{P}\coloneqq \boldsymbol{P}\boldsymbol{:}\bbone_3$ its trace, by $\dev \boldsymbol{P} \coloneqq \boldsymbol{P} - \frac{\tr \boldsymbol{P}}{3}\bbone_3$ its deviatoric or trace-free part, by $\sym \boldsymbol{P}\coloneqq \frac12(\boldsymbol{P}+\boldsymbol{P}^\top)$ its symmetric part, and by $\stf \boldsymbol{P} \coloneqq \dev\sym \boldsymbol{P}$ its symmetric and trace-free part. Thus, in particular, we are interested in orthogonal projections $\mathscr{A}:\R^{3\times 3} \to \R^{3\times 3}$, e.g., $\mathscr{A}\in\{\dev,\sym,\stf\}$, and the corresponding subspaces $\R^{3\times 3}_{\mathscr{A}}\coloneqq\{\boldsymbol{P}\in\R^{3\times3}:\mathscr{A}[\boldsymbol{P}]=\boldsymbol{P}\}$. Similarly, we introduce the deviatoric part and explicitly write out the previously mentioned symmetric part as well as the symmetric, trace-free part of the $3$-tensor $\boldsymbol{P} = (P_{ijk})_{i,j,k = 1}^{3} \in\R^{3\times3\times3}$ in coordinate-wise notation as
\begingroup{}%
\allowdisplaybreaks[1]%
\begin{subequations}\begin{align}
  {(\Dev \boldsymbol{P})}_{ijk}
   &\coloneqq
   P_{ijk} - \tfrac{1}{5} \left( P_{ill} \delta_{jk} + P_{ljl} \delta_{ik} + P_{llk} \delta_{ij}\right),\\
  {(\Sym \boldsymbol{P})}_{ijk}
   &\coloneqq
   P_{(ijk)}
   \coloneqq\tfrac16\left(P_{ijk}+P_{jki}+P_{kij}+P_{jik}+P_{ikj}+P_{kji}\right)
    ,\\
   {(\Stf \boldsymbol{P})}_{ijk}
   &\coloneqq
   P_{\langle ijk \rangle}
   \coloneqq
   P_{(ijk)} - \tfrac{1}{5} \left( P_{(ill)} \delta_{jk} + P_{(ljl)} \delta_{ik} + P_{(llk)} \delta_{ij}\right).
\end{align}\end{subequations}
\endgroup{}%


\section{A mixed formulation for the linear R13 equations}\label{s_mixedform}
For the analysis, recognizing the saddle point structure is crucial. Therefore, let us first introduce the Hilbert spaces as
\begin{align}\label{eq_functionSpaceV}
  \mathrm{V} \coloneqq \hil^1(\Omega;\mathbb{R}^{3 \times 3}_{\operatorname{stf}}) \times \hil^1(\Omega;\mathbb{R}^3) \times \tilde{\operatorname{H}}{}^1(\Omega;\mathbb{R})
  ,\quad
  \mathrm{Q} \coloneqq \lebe^2(\Omega;\mathbb{R}^3) \times \lebe^2(\Omega;\mathbb{R})
  ,
\end{align}
together with their canonical norms \({\|\cdot\|}_\mathrm{V}\) and \({\|\cdot\|}_\mathrm{Q}\), whereas we distinguish between \(\tilde{\operatorname{H}}{}^1(\Omega;\mathbb{R}) \coloneqq \left\{p \in \hil^1(\Omega;\mathbb{R}) : \int_{\Omega} p \,\mathrm{d} \boldsymbol{x} = 0\right\}\) if \(\epsilon^{\textnormal{w}} = 0\) (with \(\epsilon^{\textnormal{w}}\) from \cref{bcs_3d}) and \(\tilde{\operatorname{H}}{}^1(\Omega;\mathbb{R}) \coloneqq \operatorname{H}^1(\Omega;\mathbb{R})\) if \(\epsilon^{\textnormal{w}} > 0\).
\par
We can then introduce for all solution fields \((\boldsymbol{\sigma},\boldsymbol{s},p,\boldsymbol{u},\theta)\) from \cref{s_intro}, corresponding test functions \((\boldsymbol{\psi},\boldsymbol{r},q,\boldsymbol{v},\kappa)\), which we group into two sets of variables
\begin{alignat}{2}
  \boldsymbol{\mathcal{U}} &= (\boldsymbol{\sigma},\boldsymbol{s},p) \in \mathrm{V}
  , \quad &
  \boldsymbol{\mathcal{P}} &= (\boldsymbol{u},\theta) \in \mathrm{Q}
  ,\label{eq_grouping1}
  \\
  \boldsymbol{\mathcal{V}} &= (\boldsymbol{\psi},\boldsymbol{r},q) \in \mathrm{V}
  , \quad &
  \boldsymbol{\mathcal{Q}} &= (\boldsymbol{v},\kappa) \in \mathrm{Q}
  .\label{eq_grouping2}
\end{alignat}
To derive the weak form, we integrate and test \cref{eq_balance1,eq_balance2} with their corresponding test functions, apply integration by parts, insert the boundary conditions \cref{bcs_3d}, and use the closure relations \cref{eq_closures}. As the calculations are very lengthy, we refer to~\cite{theisenFenicsR13TensorialMixed2021} for the detailed derivation in the synthetic two-dimensional case (using a slab geometry) with a straightforward extension to three dimensions. We, thus, obtain the abstract mixed formulation: Given \(\mathcal{F} \in \mathrm{V}', \mathcal{G} \in \mathrm{Q}'\), find \(\boldsymbol{\mathcal{U}} \in \mathrm{V}, \boldsymbol{\mathcal{P}} \in \mathrm{Q}\) such that
\begin{alignat}{2}\label{eq_mixedFormAbstract1a}
  \mathcal{A}(\boldsymbol{\mathcal{U}},\boldsymbol{\mathcal{V}})+ \mathcal{B}(\boldsymbol{\mathcal{V}},\boldsymbol{\mathcal{P}}) &= \mathcal{F}(\boldsymbol{\mathcal{V}}) \quad &\forall\, \boldsymbol{\mathcal{V}}{} &\in \mathrm{V},
  \\
  \label{eq_mixedFormAbstract2a}
  \mathcal{B}(\boldsymbol{\mathcal{U}},\boldsymbol{\mathcal{Q}}) &= \mathcal{G}(\boldsymbol{\mathcal{Q}}) \quad &\forall\, \boldsymbol{\mathcal{Q}} &\in \mathrm{Q}.
\end{alignat}
In \cref{eq_mixedFormAbstract1a,eq_mixedFormAbstract2a}, the grouped bilinear forms are given by
\begin{align}\label{eq_bilinearFormCalA}
  \mathcal{A}(\boldsymbol{\mathcal{U}},\boldsymbol{\mathcal{V}})
  &=
  a(\boldsymbol{s},\boldsymbol{r})
  +
  c(\boldsymbol{s},\boldsymbol{\psi})
  -
  c(\boldsymbol{r},\boldsymbol{\sigma})
  +
  d(\boldsymbol{\sigma},\boldsymbol{\psi}) + f(p,\boldsymbol{\psi}) + f(q,\boldsymbol{\sigma}) + h(p,q)
  ,
  \\
  \label{eq_bilinearFormCalB}
  \mathcal{B}(\boldsymbol{\mathcal{U}},\boldsymbol{\mathcal{Q}})
  &=
  -
  e(\boldsymbol{v},\boldsymbol{\sigma})
  -
  g(p,\boldsymbol{v})
  -
  b(\kappa, \boldsymbol{s})
  ,
\end{align}
which are defined, with \(\tilde{\chi}\) and \(\epsilon^{\textnormal{w}}\) from \cref{bcs_3d}, by the bilinear forms
\begingroup{}%
\allowdisplaybreaks[1]%
\begin{subequations}
\begin{align}
  a(\boldsymbol{s},\boldsymbol{r})
  &= \begin{multlined}[t]
  \frac{24}{25} \operatorname{Kn} \int_\Omega \operatorname{sym} \textnormal{D}\boldsymbol{s} \boldsymbol{:} \operatorname{sym} \textnormal{D}\boldsymbol{r} \,\textnormal{d} \boldsymbol{x}
  +
  \frac{12}{25} \operatorname{Kn} \int_\Omega \operatorname{div}(\boldsymbol{s}) \operatorname{div}(\boldsymbol{r}) \,\textnormal{d} \boldsymbol{x}
  \\
  +
  \frac{4}{15} \frac{1}{\operatorname{Kn}} \int_\Omega \boldsymbol{s} \boldsymbol{\cdot} \boldsymbol{r} \,\textnormal{d} \boldsymbol{x}
  + \frac{1}{2} \frac{1}{\tilde{\chi}} \int_\Gamma s_n r_n \,\textnormal{d} l
  + \frac{12}{25} \tilde{\chi} \sum_{i=1}^2 \int_\Gamma s_{t_i} r_{t_i} \,\textnormal{d} l
  ,
  \end{multlined}
  \\
  c(\boldsymbol{r},\boldsymbol{\sigma})
  &=
  \frac{2}{5} \int_\Omega \boldsymbol{\sigma} \boldsymbol{:} \textnormal{D} \boldsymbol{r} \,\textnormal{d} \boldsymbol{x}
  - \frac{3}{20} \int_\Gamma \sigma_{nn} r_n \,\textnormal{d} l
  - \frac{1}{5} \sum_{i=1}^2 \int_\Gamma \sigma_{nt_{i}} r_{t_i} \,\textnormal{d} l
  ,
  \\
  d(\boldsymbol{\sigma},\boldsymbol{\psi})
  &= \begin{multlined}[t]
  \operatorname{Kn} \int_\Omega \operatorname{Stf} \textnormal{D} \boldsymbol{\sigma} \pmb{\because} \operatorname{Stf} \textnormal{D} \boldsymbol{\psi} \,\textnormal{d} \boldsymbol{x}
  +
  \frac{1}{2} \frac{1}{\operatorname{Kn}} \int_\Omega \boldsymbol{\sigma} \boldsymbol{:} \boldsymbol{\psi} \,\textnormal{d} \boldsymbol{x}
  \\
  +
  \frac{9}{8} \tilde{\chi} \int_\Gamma \sigma_{nn} \psi_{nn} \,\textnormal{d} l
  +
  \tilde{\chi} \int_\Gamma \left( \sigma_{t_{1}t_{1}} + \frac{1}{2} \sigma_{nn} \right) \left( \psi_{t_{1}t_{1}} + \frac{1}{2} \psi_{nn} \right) \,\textnormal{d} l
  \\
  +
  \tilde{\chi} \int_\Gamma \sigma_{t_{1}t_{2}} \psi_{t_{1}t_{2}} \,\textnormal{d} l
  +
  \frac{1}{\tilde{\chi}}\sum_{i=1}^2  \int_\Gamma \sigma_{nt_{i}} \psi_{nt_{i}} \,\textnormal{d} l
  +
  \epsilon^{\textnormal{w}} \tilde{\chi} \int_\Gamma \sigma_{nn} \psi_{nn} \,\textnormal{d} l
  \label{eq:bilinaerform_d}
  ,
  \end{multlined}
  \\
  b(\theta, \boldsymbol{r})
  &=
  \int_\Omega \theta \, \operatorname{div}(\boldsymbol{r}) \,\textnormal{d} \boldsymbol{x}
  ,\quad
  e(\boldsymbol{u},\boldsymbol{\psi})
  =
  \int_\Omega \operatorname{Div}(\boldsymbol{\psi}) \boldsymbol{\cdot} \boldsymbol{u} \,\textnormal{d} \boldsymbol{x}
  ,
  \\
  f(p,\boldsymbol{\psi})
  &=
  \epsilon^{\textnormal{w}} \tilde{\chi} \int_\Gamma p \psi_{nn} \,\textnormal{d} l
  ,\quad
  g(p,\boldsymbol{v})
  =
  \int_\Omega \boldsymbol{v} \boldsymbol{\cdot} \boldsymbol{\nabla} p \,\textnormal{d} \boldsymbol{x}
  \label{eq:bilinaerform_fg}
  ,
  \\
  h(p,q)
  &=
  \epsilon^{\textnormal{w}} \tilde{\chi}  \int_\Gamma p q \,\textnormal{d} l
  \label{eq:bilinaerform_h}
  .
\end{align}
\end{subequations}
\endgroup{}%
The linear functionals in \cref{eq_mixedFormAbstract1a,eq_mixedFormAbstract2a}, on the other hand, read
\begin{align}
  \mathcal{F}(\boldsymbol{\mathcal{V}})
  &=
  l_1(\boldsymbol{r})
  +
  l_3(\boldsymbol{\psi})
  +
  l_5(q)
  ,
  \\
  \mathcal{G}(\boldsymbol{\mathcal{Q}})
  &=
  -
  l_2(\kappa)
  -
  l_4(\boldsymbol{v})
  ,
\end{align}
with the functionals
\begin{subequations}\begin{align}
  l_1(\boldsymbol{r}) &= - \int_\Gamma \theta^{\textnormal{w}} r_n \,\textnormal{d} l
  ,\quad
  l_2(\kappa) = \int_\Omega \left( r - m \right) \kappa \,\textnormal{d} \boldsymbol{x}
  ,
  \\
  l_3(\boldsymbol{\psi}) &= - \int_\Gamma \left( \sum_{i=1}^2 u_{t_i}^{\textnormal{w}} \psi_{nt_{i}} + \left( u_n^{\textnormal{w}} - \epsilon^{\textnormal{w}} \tilde{\chi} p^{\textnormal{w}} \right) \psi_{nn} \right) \,\textnormal{d} l
  ,\quad
  l_4(\boldsymbol{v}) = \int_\Omega \boldsymbol{b} \boldsymbol{\cdot} \boldsymbol{v} \,\textnormal{d} \boldsymbol{x}
  ,
  \\
  l_5(q) &= \int_\Omega m q \,\textnormal{d} \boldsymbol{x} - \int_\Gamma \left( u_n^{\textnormal{w}} - \epsilon^{\textnormal{w}} \tilde{\chi}  p^{\textnormal{w}} \right) q \,\textnormal{d} l
  .
\end{align}\end{subequations}%
\par
\begin{remark}
  To simplify notation, later on, adding \(d(\boldsymbol{\sigma}, \boldsymbol{\psi})\), \(f(p,\boldsymbol{\psi})\), \(f(q,\boldsymbol{\sigma})\), and \(h(p,q)\) from \cref{eq:bilinaerform_d,eq:bilinaerform_fg,eq:bilinaerform_h} allows us to define \(\bar{d} : [\hil^1(\Omega;\mathbb{R}^{3 \times 3}_{\operatorname{stf}}) \times \tilde{\operatorname{H}}{}^1(\Omega;\mathbb{R})] \times [\hil^1(\Omega;\mathbb{R}^{3 \times 3}_{\operatorname{stf}}) \times \tilde{\operatorname{H}}{}^1(\Omega;\mathbb{R})] \to \mathbb{R}\) as
  \begin{equation}
    \begin{multlined}[t]
    \bar{d}((\boldsymbol{\sigma}, p), (\boldsymbol{\psi}, q))
    =
    \operatorname{Kn} \int_\Omega \operatorname{Stf} \textnormal{D} \boldsymbol{\sigma} \pmb{\because} \operatorname{Stf} \textnormal{D} \boldsymbol{\psi} \,\textnormal{d} \boldsymbol{x}
    +
    \frac{1}{2} \frac{1}{\operatorname{Kn}} \int_\Omega \boldsymbol{\sigma} \boldsymbol{:} \boldsymbol{\psi} \,\textnormal{d} \boldsymbol{x}
    \\
    +
    \frac{9}{8} \tilde{\chi} \int_\Gamma \sigma_{nn} \psi_{nn} \,\textnormal{d} l
    +
    \tilde{\chi} \int_\Gamma \left( \sigma_{t_{1}t_{1}} + \frac{1}{2} \sigma_{nn} \right) \left( \psi_{t_{1}t_{1}} + \frac{1}{2} \psi_{nn} \right) \,\textnormal{d} l
    \\
    +
    \tilde{\chi} \int_\Gamma \sigma_{t_{1}t_{2}} \psi_{t_{1}t_{2}} \,\textnormal{d} l
    +
    \frac{1}{\tilde{\chi}}\sum_{i=1}^2  \int_\Gamma \sigma_{nt_{i}} \psi_{nt_{i}} \,\textnormal{d} l
    +
    \epsilon^{\textnormal{w}} \tilde{\chi} \int_\Gamma (p + \sigma_{nn}) (q + \psi_{nn}) \,\textnormal{d} l
    .
  \end{multlined}
    \label{eq:bilinaerform_dbar}
  \end{equation}
  Comparing \(\bar{d}\) to \(d\) from \cref{eq:bilinaerform_d}, only the last term \(\epsilon^{\textnormal{w}} \tilde{\chi} \int_\Gamma (p + \sigma_{nn}) (q + \psi_{nn}) \,\textnormal{d} l\) differs and now contains the \emph{total pressure} \(p+\sigma_{nn}\). This term highlights the positive semidefiniteness of \(\bar{d}\) on the \((\boldsymbol{\sigma},p)\)-variables. Then, \(\mathcal{A}\) from \cref{eq_bilinearFormCalA} is rewritten as
  \begin{equation}
    \mathcal{A}(\boldsymbol{\mathcal{U}},\boldsymbol{\mathcal{V}}) = a(\boldsymbol{s},\boldsymbol{r}) + c(\boldsymbol{s},\boldsymbol{\psi}) - c(\boldsymbol{r},\boldsymbol{\sigma}) + \bar{d}((\boldsymbol{\sigma}, p), (\boldsymbol{\psi}, q))
    .
  \end{equation}
\end{remark}
Using the Cauchy--Bunyakovsky--Schwarz (CBS) inequality, we conclude that both bilinear forms \(\mathcal{A}\) and \(\mathcal{B}\) are continuous (see Appendix~\ref{sec:Appendix}). At the same time, \(\mathcal{A}\) is not entirely symmetric as \(\mathcal{A}(\boldsymbol{\mathcal{U}},\boldsymbol{\mathcal{V}}) = \mathcal{A}(\boldsymbol{\mathcal{V}},\boldsymbol{\mathcal{U}}) + 2 (c(\boldsymbol{r},\boldsymbol{\sigma}) - c(\boldsymbol{s},\boldsymbol{\psi}))\), as only  \(a\) and \(\bar{d}\) are symmetric. We also introduce corresponding linear continuous operators \(A : \mathrm{V} \to \mathrm{V}'\), defined by \({\langle A \boldsymbol{\mathcal{U}} , \boldsymbol{\mathcal{V}} \rangle}_{\mathrm{V}' \times \mathrm{V}} \coloneqq \mathcal{A}(\boldsymbol{\mathcal{U}},\boldsymbol{\mathcal{\mathrm{V}}}) ,\forall\, \boldsymbol{\mathcal{U}}, \boldsymbol{\mathcal{V}} \in \mathrm{V}\), and \(B : \mathrm{V} \to \mathrm{Q}'\), defined by \({\langle B \boldsymbol{\mathcal{V}} , \boldsymbol{\mathcal{Q}} \rangle}_{\mathrm{Q}' \times \mathrm{Q}} \coloneqq \mathcal{B}(\boldsymbol{\mathcal{V}},\boldsymbol{\mathcal{Q}}) ,\forall\, \boldsymbol{\mathcal{V}} \in \mathrm{V}, \forall \boldsymbol{\mathcal{Q}} \in \mathrm{Q}\), which are needed later on.
\begin{figure}[t]%
  \centering%
  \includegraphics{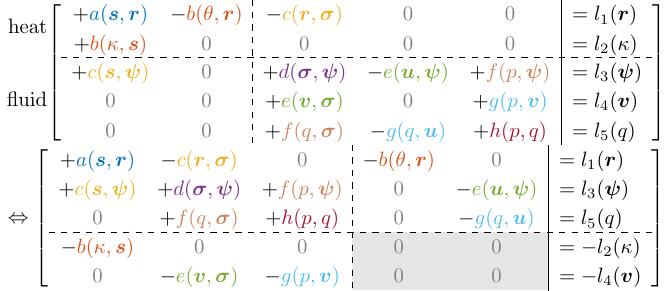}%
  \caption{%
    Visualization of the weak equation structure, in which for the first system (top), the two equations of the heat system only couple through the bilinear form \(c(\boldsymbol{r},\boldsymbol{\sigma})\) to the three fluid equations. A reordering according to trivial diagonal terms yields the saddle point structure in the second system (bottom).%
  }\label{fig:saddlePointStructure}%
\end{figure}%
\par
The choice of \(\mathcal{A}\) and \(\mathcal{B}\) reorders the physical heat (\(\theta,\boldsymbol{s}\)) and fluid (\(p,\boldsymbol{u},\boldsymbol{\sigma}\)) variables into a saddle point system; see \cref{fig:saddlePointStructure} for a sketch of the process. This grouping allows us to describe the system as compactly as possible. Still, this may not be the only possible strategy, and the separate analysis of the heat and fluid equations, coupled only with the bilinear form \(c\), might also be possible (see, e.g., the discussion in~\cite[Sec.~4.3.1]{boffiMixedFiniteElement2013}). Having established the saddle point formulation, we can proceed with the well-posedness analysis. For that, the two main ingredients are:
\begin{enumerate}
  \item the coercivity of \(\mathcal{A}\) on \(\ker B\) (see \cref{sec:CoerKern});
  \item and the inf--sup condition for \(\mathcal{B}\) (see \cref{sec:supCond}).
\end{enumerate}
Both of the above points rely on new ellipticity results for tensor fields, which we will derive in the next section.


\section{Coercivity and Korn-type inequalities}\label{sec:Korn}
Korn inequalities play a fundamental role in the study of variational principles in linear elasticity or fluid mechanics. In their most famous version, they show that the $\lebe^2$-norm of the full gradient of a vector field can be controlled by the $\lebe^2$-norm of only its symmetric part under suitable additional conditions, such as zero boundary values (see, e.g.,~\cite{ciarletKornsInequality2010,kornUberEinigeUngleichungen1909,payneKornsInequality1961}). More precisely, let us denote by $\Omega\subset\R^d$ a bounded Lipschitz domain, then there exists a constant $c=c(\Omega)>0$ such that for all $u\in\hil^1_0(\Omega;\R^d)$, we have
\begin{align}\label{eq:korn1}\tag{Korn-1}
 \norm{\D \boldsymbol u}_{\lebe^2(\Omega)}\le c\,\norm{\sym\D\boldsymbol u}_{\lebe^2(\Omega)}.
\end{align}
The complete classification of differential operators that might appear on the right-hand side of such types of inequalities is already contained in~\cite{aronszajnCoerciveIntegrodifferentialQuadratic1955}. We will refer to such coercivity estimates when zero boundary conditions are present as \emph{Korn inequalities of the first type} and include Aronszajn's characterization for completeness (only focusing on first-order differential operators):
\begin{lemma}[Aronszajn \cite{aronszajnCoerciveIntegrodifferentialQuadratic1955}, Korn inequalities of the first type]\label{lem:Aronszajn}%
  Let $d\ge2$, $\Omega\subset\R^d$ be a bounded Lipschitz domain, $q\in(1,\infty)$, $V$ and $\tilde{V}$ two finite-dimensional real inner product spaces, and $\mathbb{A}\coloneqq\sum_{j=1}^d\mathbb{A}_j\partial_j$ a linear homogeneous differential operator of first order with constant coefficients $\mathbb{A}_j\in L(V,\tilde{V})$. Then the following are equivalent:
\begin{enumerate}
  \item There exists a constant $c=c(\mathbb{A},q,\Omega)>0$ such that the inequality
\begin{align}\tag{K-1}
\norm{\D u}_{\lebe^{q}(\Omega)}\le c\, \norm{\mathbb{A} u}_{\lebe^q(\Omega)}
,
\end{align}
holds for all $u\in\sobo^{1,q}_0(\Omega;V)$.
\item $\mathbb{A}$ is an $\mathbb{R}$-elliptic differential operator, meaning that \phantom{fix latexml on arxiv}  
\begin{align}\tag{\mbox{$\R$-ellipt.}}\label{eq:Rellipt}
 \forall\, \boldsymbol{\xi}\in\R^d\setminus\{\boldsymbol{0}\}: \Bigg(\sum_{j=1}^d\xi_j\mathbb{A}_jv\overset{!}{=} 0 \, \Rightarrow \, v=0 \Bigg).
\end{align}
\end{enumerate}
\end{lemma}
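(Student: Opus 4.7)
The plan is to prove each direction separately, noting that the easier converse $\text{(K-1)} \Rightarrow (\R\text{-ellipt.})$ follows by a quick oscillation argument, while the substantive content lies in $(\R\text{-ellipt.}) \Rightarrow \text{(K-1)}$, which I would handle via Fourier multiplier theory applied to the symbol of $\mathbb{A}$.

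For the implication $\text{(K-1)} \Rightarrow (\R\text{-ellipt.})$, I would argue by contraposition. Assume $\sum_{j} \xi_j \mathbb{A}_j v = 0$ for some $\boldsymbol{\xi} \in \R^d \setminus \{\boldsymbol{0}\}$ and some $v \in V \setminus \{0\}$. Picking a bump $\varphi \in \hold_c^{\infty}(\Omega; \R)$ with $\varphi \not\equiv 0$, consider the oscillating family $u_n(\boldsymbol{x}) := \varphi(\boldsymbol{x}) \cos(n \boldsymbol{\xi} \cdot \boldsymbol{x})\, v \in \sobo_0^{1,q}(\Omega; V)$. A direct computation gives
\begin{equation*}
\mathbb{A} u_n \;=\; \cos(n \boldsymbol{\xi} \cdot \boldsymbol{x}) \sum_{j=1}^d (\partial_j \varphi)\, \mathbb{A}_j v \;-\; n \sin(n \boldsymbol{\xi} \cdot \boldsymbol{x})\, \varphi \sum_{j=1}^d \xi_j \mathbb{A}_j v,
\end{equation*}
where the leading-order-in-$n$ term vanishes by the choice of $v$, so $\norm{\mathbb{A} u_n}_{\lebe^q}$ stays uniformly bounded. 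Meanwhile $\D u_n$ contains the term $-n \varphi \sin(n \boldsymbol{\xi} \cdot \boldsymbol{x})\, v \otimes \boldsymbol{\xi}$ of order $n$, so $\norm{\D u_n}_{\lebe^q} \to \infty$, contradicting (K-1).

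For the main direction $(\R\text{-ellipt.}) \Rightarrow \text{(K-1)}$, by density it suffices to bound the Korn quotient on $u \in \hold_c^{\infty}(\Omega; V)$ extended by zero to $\R^d$. The associated symbol $\widehat{\mathbb{A}}(\boldsymbol{\xi}) := i \sum_j \xi_j \mathbb{A}_j \in L(V, \tilde V)$ is injective for $\boldsymbol{\xi} \neq \boldsymbol{0}$ by hypothesis, so its Gram map $S(\boldsymbol{\xi}) := \widehat{\mathbb{A}}(\boldsymbol{\xi})^* \widehat{\mathbb{A}}(\boldsymbol{\xi})$ is self-adjoint, positive-definite on $V$, and $2$-homogeneous on $\R^d \setminus \{\boldsymbol{0}\}$. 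Consequently the left pseudo-inverse $M(\boldsymbol{\xi}) := S(\boldsymbol{\xi})^{-1} \widehat{\mathbb{A}}(\boldsymbol{\xi})^*$ is smooth away from the origin, $(-1)$-homogeneous, and satisfies $M(\boldsymbol{\xi}) \widehat{\mathbb{A}}(\boldsymbol{\xi}) = \mathrm{id}_V$. Then for each $k \in \{1, \ldots, d\}$ the symbol $m_k(\boldsymbol{\xi}) := i \xi_k M(\boldsymbol{\xi})$ is smooth on $\R^d \setminus \{\boldsymbol{0}\}$ and $0$-homogeneous, hence fulfills the Mihlin--H\"ormander multiplier condition. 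Using the identity $\widehat{\partial_k u}(\boldsymbol{\xi}) = m_k(\boldsymbol{\xi}) \widehat{\mathbb{A} u}(\boldsymbol{\xi})$ together with the vector-valued Mihlin multiplier theorem on $\lebe^q(\R^d)$, $q \in (1, \infty)$, one obtains $\norm{\partial_k u}_{\lebe^q(\R^d)} \le C(\mathbb{A}, q)\, \norm{\mathbb{A} u}_{\lebe^q(\R^d)}$; summing over $k$ and restricting to $\Omega$ yields (K-1).

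The main obstacle is the hard direction, where the algebra is routine but the harmonic-analytic step is not: one must verify Mihlin-type derivative bounds on $m_k$ uniformly on the sphere and invoke a matrix-valued multiplier theorem. Because $V$ and $\tilde V$ are finite-dimensional this reduces to the scalar Mihlin theorem applied entrywise, so no UMD considerations intrude. A minor subtlety worth flagging is that $\dim V$ may be strictly less than $\dim \tilde V$, which is precisely why a genuine left pseudo-inverse $M$ must replace a direct matrix inverse of $\widehat{\mathbb{A}}$.
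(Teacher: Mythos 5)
The paper does not prove this lemma: it is quoted as a classical result and the only justification given is the citation to Aronszajn (1955), so there is no internal proof to compare against. Your blind proof is nevertheless a correct and essentially the standard modern argument. The easy direction via the oscillating family $u_n=\varphi\cos(n\,\boldsymbol\xi\cdot\boldsymbol x)\,v$ is sound: when $\sum_j\xi_j\mathbb{A}_jv=0$ the $O(n)$ term in $\mathbb{A}u_n$ cancels while the $O(n)$ term $-n\varphi\sin(n\,\boldsymbol\xi\cdot\boldsymbol x)\,v\otimes\boldsymbol\xi$ in $\D u_n$ survives with $\lebe^q$-norm bounded below (an oscillation/Riemann--Lebesgue averaging shows $\norm{\varphi\sin(n\,\boldsymbol\xi\cdot\boldsymbol x)}_{\lebe^q}$ does not decay), so the quotient blows up and (K-1) fails. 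The hard direction via the left pseudo-inverse $M(\boldsymbol\xi)=S(\boldsymbol\xi)^{-1}\widehat{\mathbb{A}}(\boldsymbol\xi)^*$ and the $0$-homogeneous multipliers $m_k(\boldsymbol\xi)=i\xi_kM(\boldsymbol\xi)$ is also the accepted route: the $\R$-ellipticity hypothesis gives injectivity of the symbol, hence invertibility of the Gram map $S(\boldsymbol\xi)$ on the unit sphere, and smoothness plus $0$-homogeneity of $m_k$ yields the Mihlin--H\"ormander bounds on $\R^d\setminus\{\boldsymbol 0\}$ by compactness; your observation that finite-dimensionality of $V$, $\tilde V$ lets one apply the scalar Mihlin theorem entrywise correctly sidesteps UMD issues. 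The only point I would tighten is the final harmonic-analysis step: you assert the multiplier condition rather than verify it, and you should record explicitly that for $u\in\hold^\infty_c(\Omega;V)$ extended by zero the Plancherel-type identity $\widehat{\partial_ku}=m_k\widehat{\mathbb{A}u}$ holds in the Schwartz class before invoking the theorem, and then pass to $\sobo^{1,q}_0(\Omega;V)$ by density. These are bookkeeping matters, not gaps.
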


\begin{example}%
  Set $V=\R^d$, $\tilde{V}=\R^{d\times d}$, and consider the symmetrized gradient for vector fields, given by
  \begin{align}
    \mathbb{A}\boldsymbol{u}\coloneqq \sym \D \boldsymbol{u}.
  \end{align}
It is well known that the symmetrized gradient is $\R$-elliptic. Thus, by \cref{lem:Aronszajn}, we recover \cref{eq:korn1}.
\end{example}
\begin{example}%
 In many applications, the differential operator of interest has more structure and is given by applying an orthogonal projection on the gradient of a vector field as
 \begin{align}
  \mathbb{A}\boldsymbol u=\mathscr{A}[\D \boldsymbol u] \quad \text{with}\quad  \mathscr{A}:\R^{d\times d}\to\R^{d\times d} \quad \text{such that} \quad \mathscr{A}^2=\mathscr{A}.
 \end{align}
Classical examples are the symmetric part $\mathscr{A}=\sym$, the deviatoric (trace-free) part $\mathscr{A}=\dev$, or the symmetric trace-free part $\mathscr{A}=\stf=\sym\dev$, which all give $\R$-elliptic operators.
\end{example}

Later in \cref{sec:rightInverseOfDivergence}, we also need the following connection to the classical elliptic regularity theory:

\begin{remark}[Legendre--Hadamard ellipticity]\label{rem:LH}
 The projection $\mathscr{A}:\R^{d\times d}\to\R^{d\times d}$ induces an elliptic differential operator $\mathbb{A}\coloneqq\mathscr{A}[\boldsymbol{\cdot}\otimes\boldsymbol{\nabla}]$ if and only if for all $\boldsymbol{\xi}\in\R^d\setminus\{\boldsymbol{0}\}$, the corresponding symbol map $\mathbb{A}[\boldsymbol{\xi}] \coloneqq \mathscr{A}[\boldsymbol{\cdot}\otimes\boldsymbol{\xi}]:\R^d\to\R^{d\times d}$ is injective. In that case, since $\mathscr{A}$ is continuous and $\partial{B_1(\boldsymbol{0})}\times\partial{B_1(\boldsymbol{0})}$ is compact, we find a $\lambda>0$ such that for all $\boldsymbol{\eta},\boldsymbol{\xi}\in\R^d\setminus\{\boldsymbol{0}\}$, we have
 \begin{align}
  \lambda\le \left\lvert\mathscr{A}\left[\frac{\boldsymbol{\eta}}{\abs{\boldsymbol{\eta}}}\otimes\frac{\boldsymbol{\xi}}{\abs{\boldsymbol{\xi}}}\right]\right\rvert \quad \text{by just taking} \quad \lambda\coloneqq\min_{\substack{\abs{\boldsymbol{z}}=1\\\abs{\boldsymbol{y}}=1}}\abs{\mathscr{A}[\boldsymbol{z}\otimes \boldsymbol{y}]}
  ,
 \end{align}
while $\lambda>0$ holds by the injectivity of $\mathscr{A}[\boldsymbol{\cdot}\otimes\boldsymbol{\xi}]$. Thus, by the linearity of $\mathscr{A}$, we conclude that for all $\boldsymbol{\eta},\boldsymbol{\xi}\in\R^d$:
\begin{align}\label{eq:inject}
 \lambda\abs{\boldsymbol{\eta}}\abs{\boldsymbol{\xi}}\le\abs{\mathscr{A}[\boldsymbol{\eta}\otimes\boldsymbol{\xi}]}.
\end{align}
Since $\mathscr{A}$ is an orthogonal projection, we have
\begin{align}
 \mathscr{A}[\boldsymbol{\eta}\otimes\boldsymbol{\xi}]\boldsymbol{:}\boldsymbol{\eta}\otimes\boldsymbol{\xi}= \mathscr{A}[\boldsymbol{\eta}\otimes\boldsymbol{\xi}]\boldsymbol{:} \mathscr{A}[\boldsymbol{\eta}\otimes\boldsymbol{\xi}]=\abs{ \mathscr{A}[\boldsymbol{\eta}\otimes\boldsymbol{\xi}]}^2\overset{\mathclap{\eqref{eq:inject}}}{\ge}\lambda^2\abs{\boldsymbol{\eta}}^2\abs{\boldsymbol{\xi}}^2
 ,
\end{align}
which is the Legendre--Hadamard ellipticity condition.
\end{remark}
The well-posedness of our model strongly relies on a \emph{Korn inequality of the second type}, i.e., when the boundary conditions are not \emph{essentially} imposed in the function space but are instead \emph{naturally} imposed in the weak form. We recall the following classification of differential operators:
\begin{lemma}[Nečas {\cite[Thm.~5]{necasNormesEquivalentesDans1966}}, Korn inequalities of the second type]\label{lem:Necas}%
  Let $d\ge2$, $\Omega\subset\R^d$ be a bounded Lipschitz domain, $q\in(1,\infty)$, $V$, and $\tilde{V}$ two finite-dimensional real inner product spaces, and $\mathbb{A}\coloneqq\sum_{j=1}^d\mathbb{A}_j\partial_j$ a linear homogeneous differential operator of first order with constant coefficients $\mathbb{A}_j\in L(V,\tilde{V})$. Then the following are equivalent:
\begin{enumerate}
  \item There exists a constant $c=c(\mathbb{A},q,\Omega)>0$ such that the inequality
\begin{align}\tag{K-2}
\norm{u}_{\sobo^{1,q}(\Omega)}\le c\,(\norm{u}_{\lebe^q(\Omega)}+\norm{\mathbb{A} u}_{\lebe^q(\Omega)})
,
\end{align}
holds for all $u\in\sobo^{1,q}(\Omega;V)$.
\item $\mathbb{A}$ is a $\C$-elliptic differential operator, meaning that \phantom{fix latexml on arxiv abcdef}  
\begin{align}\tag{\mbox{$\C$-ellipt.}}\label{eq:Cellipt}
\forall\, \boldsymbol{\xi}\in\C^d\setminus\{\boldsymbol{0}\}: \Bigg(\sum_{j=1}^d\xi_j\mathbb{A}_jv+\mathrm{i}\,\xi_j\mathbb{A}_jw\overset{!}{=} 0 \, \Rightarrow \, v,w=0 \Bigg).
\end{align}
\end{enumerate}
\end{lemma}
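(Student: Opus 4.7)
The plan is to prove the two implications separately. The substantive direction is (2) $\Rightarrow$ (1), which rests on Nečas' negative-norm theorem combined with an algebraic consequence of $\C$-ellipticity; the converse direction (1) $\Rightarrow$ (2) follows by contraposition via a Rellich compactness argument.

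For (2) $\Rightarrow$ (1), I would first extract the algebraic content of $\C$-ellipticity. Since the symbol $\mathbb{A}[\boldsymbol{\xi}]\colon V_{\C} \to \tilde{V}_{\C}$ is injective for every $\boldsymbol{\xi} \in \C^d\setminus\{\boldsymbol{0}\}$, the variety cut out by its maximal minors reduces to $\{\boldsymbol{0}\}$; Hilbert's Nullstellensatz then supplies, for each $j\in\{1,\ldots,d\}$ and some common integer $K$, a polynomial matrix $M_j(\boldsymbol{\xi})$ with
\begin{equation*}
  \xi_j^{K}\, I_V = M_j(\boldsymbol{\xi})\,\mathbb{A}[\boldsymbol{\xi}]
  ,\quad\text{equivalently}\quad
  \partial_j^{K} u = M_j(\partial)\,\mathbb{A} u,
\end{equation*}
componentwise for smooth $u$. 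Next I invoke Nečas' negative-norm theorem on bounded Lipschitz domains,
\begin{equation*}
  \norm{v}_{\lebe^q(\Omega)} \le c\,\bigl(\norm{v}_{\sobo^{-1,q}(\Omega)} + \norm{\D v}_{\sobo^{-1,q}(\Omega)}\bigr),
\end{equation*}
and apply it componentwise to $v = \partial_k u$; since $\norm{\partial_k u}_{\sobo^{-1,q}} \le \norm{u}_{\lebe^q}$, this reduces (K-2) to a bound of the form $\norm{\partial_j\partial_k u}_{\sobo^{-1,q}(\Omega)} \le c\,(\norm{u}_{\lebe^q(\Omega)} + \norm{\mathbb{A} u}_{\lebe^q(\Omega)})$. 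After extending $u$ by a cutoff to $\R^d$ and inserting the polynomial identity, the map $\mathbb{A} u \mapsto \partial_j\partial_k u$ becomes a translation-invariant singular-integral operator whose Fourier symbol has only $\boldsymbol{\xi}=\boldsymbol{0}$ as singularity; Mihlin / Calderón--Zygmund theory then supplies the required $\sobo^{-1,q}$-bound via duality against test functions in $\sobo^{1,q'}_0(\Omega)$. Commutator terms from the cutoff are strictly of lower order and are absorbed into $\norm{u}_{\lebe^q(\Omega)}$, which is precisely the term permitted on the right of (K-2).

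For (1) $\Rightarrow$ (2), I argue by contraposition. If (K-2) holds, then $\ker \mathbb{A} \cap \lebe^q(\Omega;V)$ is finite-dimensional: any sequence $u_n$ in the kernel with $\norm{u_n}_{\lebe^q}=1$ is bounded in $\sobo^{1,q}(\Omega;V)$ by (K-2), so Rellich--Kondrachov yields a convergent subsequence in $\lebe^q$, forcing unit-ball compactness. On the other hand, if $\mathbb{A}$ is not $\C$-elliptic then the characteristic variety $\{\boldsymbol{\xi} \in \C^d\setminus\{\boldsymbol{0}\} : \mathbb{A}[\boldsymbol{\xi}]\text{ not injective}\}$ is a nonempty homogeneous algebraic cone and hence contains infinitely many complex directions. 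The corresponding family of real and imaginary parts of plane-wave solutions $v\,e^{\mathrm{i}\boldsymbol{\xi}\cdot\boldsymbol{x}}$ lies in $\ker\mathbb{A}$ and, by real-analytic continuation from $\Omega$ to $\R^d$, remains linearly independent when restricted to $\Omega$. This yields an infinite-dimensional subspace of $\ker\mathbb{A}\cap\lebe^q(\Omega;V)$, contradicting the finite-dimensional conclusion.

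The principal technical obstacle is the passage from the clean Fourier-multiplier identity, valid on $\R^d$, to an estimate on the bounded Lipschitz domain $\Omega$: the multiplier symbol is singular only at the origin and thus amenable to Calderón--Zygmund/Mihlin bounds, but localizing to $\Omega$ via cutoff and extension introduces commutators that must be shown to be of strictly lower order than the derivatives being estimated. Nečas' negative-norm theorem is precisely what permits the absorption of these commutators into $\norm{u}_{\lebe^q(\Omega)}$ on the right-hand side of (K-2), and this is also why the weak-boundary formulation of Korn inequalities takes the form (K-2) rather than (K-1).
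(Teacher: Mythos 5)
The paper does not prove this lemma; it is invoked as a known result of Nečas (1966), with the higher-order variant attributed to Smith (1970). So there is no in-paper proof to compare against — what follows is an assessment of your argument on its own terms.

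Your $(1)\Rightarrow(2)$ direction is sound once phrased as finite-dimensionality of $\ker\mathbb{A}\cap\sobo^{1,q}(\Omega;V)$ (not of $\ker\mathbb{A}\cap\lebe^q$, since (K-2) is only available for $\sobo^{1,q}$-functions): closedness of the kernel under the $\lebe^q$-topology follows from weak $\sobo^{1,q}$-compactness, Rellich gives strong $\lebe^q$-precompactness of its unit ball, Riesz yields finite dimension, and the countable family of smooth plane-wave kernel elements $\mathrm{Re}\,(v_0e^{\lambda\boldsymbol{\xi}_0\cdot\boldsymbol{x}})$, $\lambda\in\N$, linearly independent on $\Omega$ by real analyticity, gives the desired contradiction whenever $\C$-ellipticity fails.

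The $(2)\Rightarrow(1)$ direction, however, has a genuine gap. As written, the step ``extend $u$ by a cutoff, insert the polynomial identity, and apply Mihlin/Calderón--Zygmund'' would only ever invoke the degree-$0$ pseudo-inverse multiplier $\boldsymbol{\xi}\mapsto\xi_k\,\mathbb{A}[\boldsymbol{\xi}]^{\dagger}$, which is smooth and homogeneous on $\R^d\setminus\{\boldsymbol{0}\}$ already under $\R$-ellipticity. A cutoff localizes to $\R^d$ only on balls compactly contained in $\Omega$; on boundary balls the extension destroys the multiplier identity and one cannot absorb the resulting terms into $\norm{u}_{\lebe^q(\Omega)}$. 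If the argument went through as sketched, it would prove (K-2) from $\R$-ellipticity alone, which is false: $\dev\sym\D$ in $d=2$ (or the Cauchy--Riemann operator $\bar\partial$) is $\R$- but not $\C$-elliptic, and has an infinite-dimensional kernel of holomorphic type, so (K-2) fails. The actual content of $\C$-ellipticity — which your sketch invokes but then discards — is that the Nullstellensatz identity
\begin{equation*}
\boldsymbol{\xi}^{\alpha}\,\mathrm{id}_V=M_\alpha(\boldsymbol{\xi})\,\mathbb{A}[\boldsymbol{\xi}],\qquad |\alpha|=K,
\end{equation*}
is a \emph{polynomial} (not rational) matrix identity, i.e.\ $\partial^\alpha u=M_\alpha(\partial)\,\mathbb{A}u$ is a genuine differential-operator relation. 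This is what permits a boundary-free duality estimate: for $\varphi\in\hold^\infty_c(\Omega)$,
\begin{equation*}
\langle\partial^\alpha u,\varphi\rangle=(-1)^{K-1}\int_\Omega \mathbb{A}u:M_\alpha^{\ast}(\partial)\varphi\,\mathrm{d}\boldsymbol{x},
\end{equation*}
so that $\norm{\partial^\alpha u}_{\sobo^{-(K-1),q}(\Omega)}\lesssim\norm{\mathbb{A}u}_{\lebe^q(\Omega)}$ without any extension, cutoff, or multiplier theory near the boundary. The second, related, gap is that this requires the \emph{higher-order} negative-norm theorem $\norm{v}_{\lebe^q(\Omega)}\lesssim\norm{v}_{\sobo^{-(K-1),q}(\Omega)}+\sum_{|\beta|=K-1}\norm{\partial^\beta v}_{\sobo^{-(K-1),q}(\Omega)}$ (Smith, cited in the paper), not the $\sobo^{-1,q}$-level Nečas lemma, since the Nullstellensatz exponent $K$ is in general $>2$ even for first-order $\mathbb{A}$; your use of the first-order version implicitly assumes $K=2$, which happens to hold for $\sym\D$ but is not guaranteed in general.
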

A similar result is also valid for higher-order differential operators and norms of negative Sobolev spaces; see~\cite[Cor.\ 7.3]{smithFormulasRepresentFunctions1970}. It is worth noting that such results for $\C$-elliptic differential operators also hold on John domains (cf.~\cite{GmeinederDieningPotAna24}), which need not allow for a boundary trace operator.
\begin{example}%
 Set $V=\R^d$, $\tilde{V}=\R^{d\times d}$, and consider the symmetrized gradient for vector fields as
 \begin{align}
  \mathbb{A}\boldsymbol{u}\coloneqq \sym \D \boldsymbol{u}.
 \end{align}
It is well known that the symmetrized gradient is $\C$-elliptic. Thus, by \cref{lem:Necas}, we find a constant $c>0$, such that for all $u\in\hil^1(\Omega)$, we have
\begin{align}\label{eq:classKorn}\tag{Korn-2}
 \norm{\boldsymbol u}_{\hil^1(\Omega)}\le c\,(\norm{\boldsymbol u}_{\lebe^2(\Omega)}+\norm{\sym\D \boldsymbol u}_{\lebe^2(\Omega)})
 ,
\end{align}
which is the classical Korn inequality of the second type.
\end{example}
\begin{remark}
Notably, several approaches exist to prove coercivity estimates of the type in \cref{lem:Necas,lem:Aronszajn}. Among them, one particularly elegant and concise approach consists of the following three major ingredients:
\begin{enumerate}
  \item An algebraic identity, which in the context of \eqref{eq:classKorn} reads
  \begin{equation}
    \mathrm{D}^2 \boldsymbol u = L(\mathrm{D}\operatorname{sym}\mathrm{D}\boldsymbol u)
    ,
  \end{equation}
  meaning that all the entries of the second derivative of a vector field $\boldsymbol u$ can be expressed as linear combinations of the entries of the derivative of its symmetrized gradient, which is a key feature of $\mathbb{C}$-elliptic operators~\cite{smithFormulasRepresentFunctions1970}.
  \item A Lions lemma~\cite[Thm.\ III.3.2]{duvautInequationsMecaniquePhysique1972}, respectively, Nečas estimate~\cite{necasNormesEquivalentesDans1966}, which is a rather deep result and expresses an equivalence of norms.
  \item A compact embedding, which for \eqref{eq:classKorn} reads: $\operatorname{L}^2(\Omega)\subset\subset\operatorname{H}^{-1}(\Omega)$.
\end{enumerate}
Such an argumentation was first presented in~\cite[Sec.\ III.3.3]{duvautInequationsMecaniquePhysique1972} to prove \eqref{eq:classKorn}. It was later advocated in many different contexts, e.g., in the incompatible case, cf.~\cite{LewNeffKornRev,lewintanKornInequalitiesIncompatible2021} and the references therein.
A stronger version of this Lions lemma was given in~\cite{AmroucheGirault} with a Nečas inequality and the Peetre--Tartar lemma.  Indeed, the Lions lemma is a deep result and is connected to other fundamental results, see~\cite{AmroucheLionslemma}. Furthermore, the Peetre--Tartar lemma can be used
in order to establish the necessity of $\mathbb{C}$-ellipticity in such type of estimates since $\mathbb{C}$-elliptic differential operators can also be characterized by a finite-dimensional kernel, see e.g.~\cite{GmeinederDieningPotAna24}.
\end{remark}

\subsection{Tensor-valued Korn inequality}
For the R13 model in the previous notation, we have the dimension $d=3$, the integrability $q=2$, $V=\R^{3\times 3}_{\stf}\coloneqq\{\boldsymbol{P}\in\R^{3\times3}:\boldsymbol{P}=\operatorname{dev}\operatorname{sym}\boldsymbol{P}=\stf \boldsymbol{P}\}$, $\tilde{V}=\R^{3\times3\times3}$, and consider the differential operator $\mathbb{A}\coloneqq\Stf\circ\, \D=\Stf(\, \cdot\otimes\boldsymbol{\nabla})$, where we recall the symmetric and trace-free part of a $3$-tensor $\boldsymbol{P}\in\R^{3\times3\times3}$, defined by
 \begin{equation}\label{eq:stfFormula}
  {(\Stf \boldsymbol{P})}_{ijk}
  =
  P_{\langle ijk \rangle}
  =
  P_{(ijk)} - \tfrac{1}{5} \left( P_{(ill)} \delta_{jk} + P_{(ljl)} \delta_{ik} + P_{(llk)} \delta_{ij}\right).
\end{equation}
Thus, to check the $\C$-ellipticity of \(\mathbb{A}\), we start with a symmetric and trace-free tensor $\boldsymbol{T}$ and consider the vanishing symbol of its symmetric, trace-free gradient \cref{eq:stfFormula}, which simplifies with \(\tr \boldsymbol{T} = 0\) to
 \begin{align} \label{eq:symbolstfgradstf}
  \tfrac13\left(T_{ij}\xi_k+T_{ik}\xi_j+T_{jk}\xi_i\right)-\tfrac{2}{15}\Big(\sum_{l}T_{lk}\xi_l\delta_{ij}+\sum_{l}T_{lj}\xi_l\delta_{ik}+\sum_{l}T_{li}\xi_{l}\delta_{jk}\Big)=0,
 \end{align}
with arbitrary $\boldsymbol{\xi}\in\C^3\setminus\{\boldsymbol{0}\}$. Our goal is to show \cref{eq:Cellipt}, i.e., to conclude that this equation possesses only the trivial complex-valued solution $\boldsymbol{T}=0$ for all $\boldsymbol{\xi}\in\C^3\setminus\{\boldsymbol{0}\}$. A multiplication of \cref{eq:symbolstfgradstf} with $\xi_k$ and summation over the index $k$ yields
 \begin{multline}
  \tfrac13\Big(T_{ij}\sum_{k}\xi_k\xi_k+\sum_{k}T_{ik}\xi_k\xi_j+\xi_i\sum_{k}T_{jk}\xi_k\Big)
  \\
  -\tfrac{2}{15}\Big(\delta_{ij}\sum_{l,k}T_{lk}\xi_l\xi_k+\xi_i\sum_{l}T_{lj}\xi_l+\sum_{l}T_{li}\xi_{l}\xi_j\Big)=0,
 \end{multline}
which in coordinate-free notations reads as
\begin{align}\label{eq:firststep}
 \tfrac13\left(\boldsymbol{T}(\skalarProd{\boldsymbol{\xi}}{\boldsymbol{\xi}})+(\boldsymbol{T}\boldsymbol{\xi})\otimes\boldsymbol{\xi} +\boldsymbol{\xi}\otimes(\boldsymbol{T}\boldsymbol{\xi})\right)-\tfrac{2}{15}\left(\bbone_3(\skalarProd{(\boldsymbol{T}\boldsymbol{\xi})}{\boldsymbol{\xi}})+\boldsymbol{\xi}\otimes(\boldsymbol{T}\boldsymbol{\xi})+(\boldsymbol{T}\boldsymbol{\xi})\otimes\boldsymbol{\xi}\right)=0.
\end{align}
Note that we use the notation \(\skalarProd{\boldsymbol{\xi}}{\boldsymbol{\zeta}}\coloneqq\sum_{j=1}^3\xi_j\zeta_j\) for all \(\boldsymbol{\xi},\boldsymbol{\zeta}\in\C^3\), which is different from the Hermitian scalar product $\sum_{j=1}^3\xi_j\overline{\zeta_j}$, where $\overline{\,\cdot\,\vphantom{z}}$ denotes the complex conjugate. In particular, this $\skalarProd{}{}$ is \emph{not} non-negative on $\C^3$. Thus, \cref{eq:firststep} simplifies to
\begin{align}
 \tfrac13\boldsymbol{T}(\skalarProd{\boldsymbol{\xi}}{\boldsymbol{\xi}})+\tfrac15\left((\boldsymbol{T}\boldsymbol{\xi})\otimes\boldsymbol{\xi}+\boldsymbol{\xi}\otimes(\boldsymbol{T}\boldsymbol{\xi})\right)-\tfrac{2}{15}\bbone_3(\skalarProd{(\boldsymbol{T}\boldsymbol{\xi})}{\boldsymbol{\xi}})=0. \label{eq:coreCheck}
\end{align}
By multiplying by $\boldsymbol{\xi}^\top$ from the left and $\boldsymbol{\xi}$ from the right, we obtain that
\begin{align}
 (\skalarProd{\boldsymbol{\xi}}{\boldsymbol{\xi}})(\skalarProd{\boldsymbol{\xi}}{(\boldsymbol{T}\boldsymbol{\xi})})\left(\tfrac13+\tfrac25-\tfrac{2}{15}\right)=0.\label{eq:coreCheck1}
\end{align}
We distinguish two cases:
\par
 \textbf{1.\ case} ($\boldsymbol{\xi}\in\C^3\setminus\{\boldsymbol{0}\}$ satisfies $\skalarProd{\boldsymbol{\xi}}{\boldsymbol{\xi}}\neq0$): Then, by \cref{eq:coreCheck1}, we have \(\skalarProd{\boldsymbol{\xi}}{(\boldsymbol{T}\boldsymbol{\xi})}=0\), and multiplying \cref{eq:coreCheck} with $\boldsymbol{\xi}$ from the right yields
\begin{align}\label{eq:case1}
 (\skalarProd{\boldsymbol{\xi}}{\boldsymbol{\xi}})\left(\tfrac13+\tfrac15\right)\boldsymbol{T}\boldsymbol{\xi}=0 \quad \overset{\mathclap{\skalarProd{\boldsymbol{\xi}}{\boldsymbol{\xi}}\neq0}}{\Rightarrow} \quad \boldsymbol{T}\boldsymbol{\xi}=0
 ,
\end{align}
such that \cref{eq:coreCheck} simplifies to \(\boldsymbol{T}(\skalarProd{\boldsymbol{\xi}}{\boldsymbol{\xi}})=0\), and since $\skalarProd{\boldsymbol{\xi}}{\boldsymbol{\xi}}\neq0$, we conclude that \(\boldsymbol{T}=\boldsymbol{0}\).
\par
\textbf{2.\ case} ($\boldsymbol{\xi}\in\C^3\setminus\{\boldsymbol{0}\}$ satisfies $\skalarProd{\boldsymbol{\xi}}{\boldsymbol{\xi}}=0$ but $\abs{\boldsymbol{\xi}}^2\neq0$): Multiplying \cref{eq:coreCheck} with $\boldsymbol{\xi}$ from the right, we obtain
\begin{align}\label{eq:case2}
  \boldsymbol{\xi}(\skalarProd{(\boldsymbol{T}\boldsymbol{\xi})}{\boldsymbol{\xi}})\left(\tfrac15-\tfrac{2}{15}\right)=0
  ,
\end{align}
so that a multiplication with $\overline{\boldsymbol{\xi}}^\top$ from the left yields
\begin{align}
 \abs{\boldsymbol{\xi}}^2\skalarProd{(\boldsymbol{T}\boldsymbol{\xi})}{\boldsymbol{\xi}}=0 \quad \overset{\mathclap{\boldsymbol{\xi}\neq\boldsymbol{0}}}{\Rightarrow}\quad \skalarProd{(\boldsymbol{T}\boldsymbol{\xi})}{\boldsymbol{\xi}}=0.
\end{align}
Hence, \cref{eq:coreCheck} simplifies to
\begin{align} \label{eq:coreCheck2}
 (\boldsymbol{T}\boldsymbol{\xi})\otimes \boldsymbol{\xi} +\boldsymbol{\xi}\otimes (\boldsymbol{T}\boldsymbol{\xi})=0.
\end{align}
Multiplying now with $\overline{\boldsymbol{\xi}}$ results in
\begin{align}
 \abs{\boldsymbol{\xi}}^2\boldsymbol{T}\boldsymbol{\xi}+\boldsymbol{\xi}(\skalarProd{(\boldsymbol{T}\boldsymbol{\xi})}{\overline{\boldsymbol{\xi}}})=0.
\end{align}
Thus, since $\boldsymbol{\xi}\neq \boldsymbol{0}$, we find an $\alpha\in\C$, such that
\begin{align}\label{eq:parallelity}
 \boldsymbol{T}\boldsymbol{\xi}=\alpha\boldsymbol{\xi}.
\end{align}
Reinserting \cref{eq:parallelity} in \cref{eq:coreCheck2} results in \(2\alpha \boldsymbol{\xi}\otimes\boldsymbol{\xi} = 0\), and multiplication with $\overline{\boldsymbol{\xi}}^\top$ from the left and $\overline{\boldsymbol{\xi}}$ from the right yields
\begin{align}
  \alpha\abs{\boldsymbol{\xi}}^4=0\quad \overset{\mathclap{\boldsymbol{\xi}\neq\boldsymbol{0}}}{\Rightarrow}\quad  \alpha=0 \quad \overset{\mathclap{\eqref{eq:parallelity}}}{\Rightarrow}\quad \boldsymbol{T}\boldsymbol{\xi}=0.
\end{align}
Returning now to \cref{eq:symbolstfgradstf}, the three sums vanish since $\boldsymbol{T}\boldsymbol{\xi}=0$, and after a multiplication by $\overline{\xi}_k$ and summation over the index $k$, we obtain
\begin{align}\label{eq:3.21}
  \boldsymbol{T}|\boldsymbol\xi|^2+(\boldsymbol{T}\overline{\boldsymbol\xi})\otimes\boldsymbol{\xi}+\boldsymbol\xi\otimes(\boldsymbol{T}\overline{\boldsymbol\xi})=0.
\end{align}
Hence, a multiplication with $\overline{\boldsymbol\xi}$ from the right gives
\begin{align}\label{eq:3.22}
  2\,\boldsymbol{T}\overline{\boldsymbol\xi}\,|\boldsymbol\xi|^2+\boldsymbol\xi\,((\boldsymbol{T}\overline{\boldsymbol\xi})\boldsymbol\cdot\overline{\boldsymbol\xi})=0.
\end{align}
Multiplying by $\overline{\boldsymbol\xi}^\top$ from the left, we deduce
\begin{align}
  3\,((\boldsymbol{T}\overline{\boldsymbol\xi})\boldsymbol\cdot\overline{\boldsymbol\xi})\,|\boldsymbol\xi|^2=0 \quad \overset{\boldsymbol{\xi}\neq\boldsymbol{0}}{\Rightarrow}\quad
  (\boldsymbol{T}\overline{\boldsymbol\xi})\boldsymbol\cdot\overline{\boldsymbol\xi}=0.
\end{align}
Thus, by \cref{eq:3.22}, it holds $\boldsymbol{T}\overline{\boldsymbol\xi}=0$, and, returning to \cref{eq:3.21}, we conclude that $\boldsymbol{T}=0$. Summarizing, we have shown that for a symmetric trace-free tensor $\boldsymbol{T}$,
\begin{align}
  \tfrac13\left(T_{ij}\xi_k+T_{ik}\xi_j+T_{jk}\xi_i\right)-\tfrac{2}{15}\Big(\sum_{l}T_{lk}\xi_l\delta_{ij}+\sum_{l}T_{lj}\xi_l\delta_{ik}+\sum_{l}T_{li}\xi_{l}\delta_{jk}\Big)\overset{!}{=}0
  ,
\end{align}
for $\boldsymbol{\xi}\in\C^3\setminus\{\boldsymbol{0}\}$ implies that $\boldsymbol{T}=\boldsymbol{0}$, meaning that the symmetric trace-free gradient is $\C$-elliptic on symmetric trace-free tensor fields.

\begin{remark}
Note that skew-symmetric matrices are in the kernel of the symbol of the symmetric trace-free gradient (since, by definition \cref{eq:stfFormula}, we only consider the symmetric and trace-free parts), such that it is not even $\R$-elliptic on $\R^{3\times 3}$. Also, for just symmetric but not trace-free matrices, we do not have $\R$-ellipticity. Consider, for example, \(a \bbone_3 \in \mathbb{R}_{\operatorname{sym}}^{3 \times 3}\) (\(a \in \mathbb{R}\)) such that for \(\boldsymbol{P} = a \bbone_3 \otimes\boldsymbol{\xi}\) (\(\boldsymbol{\xi} \in \mathbb{R}^3\)), we have that \(P_{(ill)} = \tfrac{1}{3}(a \tr \bbone_3 + 2a \bbone_3) \xi_i = \tfrac{5}{3} a \xi_i\). Then, \({(\Stf (a \bbone_3 \otimes\boldsymbol{\xi}))}_{ijk}\) in \cref{eq:stfFormula} vanishes as
\begin{align}
  \tfrac{1}{3} a (\xi_k \delta_{ij} + \xi_j \delta_{ik} + \xi_i \delta_{jk})
  - \tfrac{1}{5} \left( \tfrac{5}{3} a \xi_i \delta_{jk} + \tfrac{5}{3} a \xi_j \delta_{ik} + \tfrac{5}{3} a \xi_k \delta_{ij} \right)
  =
  0
  \quad
  \forall\, \boldsymbol{\xi} \in \mathbb{R}^3
  .
\end{align}
\end{remark}

\begin{lemma}\label{lem:CelliptSTF}%
 The symmetric trace-free gradient on symmetric trace-free $2$-tensor fields in $\R^{d\times d}$ is $\C$-elliptic if and only if $d\geq3$.
\end{lemma}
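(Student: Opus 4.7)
The plan is to establish both directions: for $d \geq 3$, I extend the $d=3$ argument from the excerpt, and for $d = 2$, I exhibit an explicit counterexample. For the direction $d \geq 3 \Rightarrow \C$-elliptic, I would redo the excerpt's computation in dimension $d$. The only dimension-dependent change in the symbol equation is the replacement of the correction coefficient $\tfrac{2}{15}$ by $\tfrac{2}{3(d+2)}$. Contracting with $\boldsymbol\xi$ produces the analogue of \cref{eq:coreCheck} with coefficients $\tfrac{1}{3}$, $\tfrac{d}{3(d+2)}$, and $\tfrac{2}{3(d+2)}$, and in particular the scalar $\tfrac{1}{5} - \tfrac{2}{15} = \tfrac{1}{15}$ appearing in \cref{eq:case2} becomes $\tfrac{d}{3(d+2)} - \tfrac{2}{3(d+2)} = \tfrac{d-2}{3(d+2)}$, which is nonzero precisely when $d \geq 3$. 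Thus the case distinction of the excerpt goes through verbatim: case~1 ($\skalarProd{\boldsymbol\xi}{\boldsymbol\xi} \neq 0$) directly yields $\boldsymbol T = \boldsymbol 0$, while case~2 ($\skalarProd{\boldsymbol\xi}{\boldsymbol\xi} = 0$) only yields $\boldsymbol T \boldsymbol\xi = \boldsymbol 0$. The remaining implication $\boldsymbol T \boldsymbol\xi = \boldsymbol 0 \Rightarrow \boldsymbol T = \boldsymbol 0$, which the excerpt treats tacitly using $d=3$-specific linear algebra, requires a uniform argument.

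Once $\boldsymbol T \boldsymbol\xi = \boldsymbol 0$, the trace-correction term in $\Stf[\boldsymbol T \otimes \boldsymbol\xi] = \boldsymbol 0$ vanishes and the equation reduces to $T_{ij}\xi_k + T_{ik}\xi_j + T_{jk}\xi_i = 0$. Multiplying by $\overline{\xi_i}$ and summing over $i$ (using $\boldsymbol T = \boldsymbol T^{\top}$ and $\skalarProd{\overline{\boldsymbol\xi}}{\boldsymbol\xi} = \abs{\boldsymbol\xi}^2$) gives $\abs{\boldsymbol\xi}^2 \boldsymbol T = -\bigl((\boldsymbol T \overline{\boldsymbol\xi})\otimes\boldsymbol\xi + \boldsymbol\xi\otimes(\boldsymbol T \overline{\boldsymbol\xi})\bigr)$; applying this identity from the right to $\overline{\boldsymbol\xi}$ forces $\boldsymbol T \overline{\boldsymbol\xi} = \gamma\boldsymbol\xi$ for some $\gamma \in \C$, whence $\boldsymbol T = -\tfrac{2\gamma}{\abs{\boldsymbol\xi}^2}\boldsymbol\xi\otimes\boldsymbol\xi$. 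Computing $\boldsymbol T \overline{\boldsymbol\xi}$ from this last expression yields $-2\gamma\boldsymbol\xi$, which combined with $\boldsymbol T \overline{\boldsymbol\xi} = \gamma\boldsymbol\xi$ forces $\gamma = 0$ and hence $\boldsymbol T = \boldsymbol 0$.

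For the direction $d = 2$, the vanishing of the coefficient $d-2$ at the crucial step points to the null cone $\{\boldsymbol\xi \in \C^2 : \skalarProd{\boldsymbol\xi}{\boldsymbol\xi} = 0\}$ as the natural place to seek a counterexample. I propose
\begin{equation*}
  \boldsymbol\xi = \begin{pmatrix} 1 \\ i \end{pmatrix} \in \C^2 \setminus \{\boldsymbol 0\}
  ,
  \qquad
  \boldsymbol T = \begin{pmatrix} i & 1 \\ 1 & -i \end{pmatrix} = \begin{pmatrix} 0 & 1 \\ 1 & 0 \end{pmatrix} + i \begin{pmatrix} 1 & 0 \\ 0 & -1 \end{pmatrix}
  ,
\end{equation*}
whose real and imaginary parts are both nonzero elements of $\R^{2\times 2}_{\stf}$. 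With $\boldsymbol T \boldsymbol\xi = (2i, 2)^{\top}$ and the $d = 2$ trace-correction prefactor $\tfrac{2}{3(d+2)} = \tfrac{1}{6}$, a direct componentwise verification confirms $\Stf[\boldsymbol T \otimes \boldsymbol\xi] = \boldsymbol 0$, contradicting \cref{eq:Cellipt}. The main obstacle is thus the dimension-independent passage from $\boldsymbol T \boldsymbol\xi = \boldsymbol 0$ to $\boldsymbol T = \boldsymbol 0$ in the null-cone case, since the kernel-dimension reasoning available for $d=3$ does not extend to $d \geq 4$.
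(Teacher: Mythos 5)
Your proof is correct and follows essentially the same route as the paper: check $\C$-ellipticity of the symbol, track the $d$-dependent prefactors, split into the cases $\skalarProd{\boldsymbol\xi}{\boldsymbol\xi}\neq 0$ and $\skalarProd{\boldsymbol\xi}{\boldsymbol\xi}=0$, and exhibit for $d=2$ the same null-cone counterexample $\boldsymbol T=\bigl(\begin{smallmatrix} i & 1 \\ 1 & -i \end{smallmatrix}\bigr)$, $\boldsymbol\xi=(1,i)^\top$. The one place you genuinely add to the paper's argument is the end of case~2: after obtaining $\boldsymbol T\boldsymbol\xi=\boldsymbol 0$ (via $\alpha=0$), the paper's displayed chain jumps directly to $\boldsymbol T=\boldsymbol 0$ citing only \cref{eq:parallelity}, but this implication is not immediate --- a symmetric trace-free $\boldsymbol T$ can annihilate a null vector without vanishing (e.g., for $d=3$ and $\boldsymbol\xi=(1,i,0)^\top$, take $T_{11}=-T_{22}=-1$, $T_{12}=-i$, rest zero). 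One must return to the uncontracted symbol equation, which with $\boldsymbol T\boldsymbol\xi=\boldsymbol 0$ reduces to $T_{ij}\xi_k+T_{ik}\xi_j+T_{jk}\xi_i=0$, and your contraction with $\overline{\xi_k}$ followed by application to $\overline{\boldsymbol\xi}$ is a clean, dimension-independent way to finish; it is a valid repair of what the paper only compresses. (A small quibble: you attribute the omitted step to ``$d=3$-specific linear algebra'' in the paper, but the paper in fact gives no argument there at all; your remedy is exactly what is needed in every dimension $d\ge3$, including $d=3$.)
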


\begin{proof}
  Besides the case of \(d=3\), in which the previous calculations prove the result, we distinguish the following cases:
  \par
  \textbf{Case $d>3$}: The prefactors have to be changed to
  \begin{multline}
  \tfrac15 \mapsto \tfrac{1}{d+2} \textnormal{ in \cref{eq:stfFormula}}
  ,\quad
  \tfrac{2}{15} \mapsto \tfrac{2}{3(d+2)} \textnormal{ in \cref{eq:symbolstfgradstf}}
  ,\quad
  \tfrac13+\tfrac25-\tfrac{2}{15} \mapsto \tfrac{d}{d+2} \textnormal{ in \cref{eq:coreCheck1}}
  ,
  \\
  \tfrac13+\tfrac15 \mapsto \tfrac{2(d+1)}{3(d+2)} \textnormal{ in \cref{eq:case1}}
  ,\quad
  \tfrac15-\tfrac{2}{15} \mapsto \tfrac{d-2}{3(d+2)} \textnormal{ in \cref{eq:case2}}
  .
  \end{multline}
  None of these prefactors vanishes for $d\ge3$, meaning that we can basically follow the arguments from the case of \(d=3\) to conclude the $\C$-ellipticity of the symmetric trace-free gradient also on symmetric trace-free tensors with $d\ge3$.
  \par
  \textbf{Case $d=2$}: Here, the prefactor in the replacement of \cref{eq:case2} would vanish. In fact, consider, for example, the choice of
  \begin{align}
   \boldsymbol{T}=\begin{pmatrix} \mathrm{i} & 1 \\ 1 & \mathrm{-i} \end{pmatrix} \quad \text{and}\quad \boldsymbol{\xi}=\begin{pmatrix} 1 \\ \mathrm{i} \end{pmatrix},
  \end{align}
 for which we have, for all $i,j,k\in\{1,2\}$, that
  \begin{align}
  \tfrac13\left(T_{ij}\xi_k+T_{ik}\xi_j+T_{jk}\xi_i\right)-\tfrac{1}{6}\Big(\sum_{l}T_{lk}\xi_l\delta_{ij}+\sum_{l}T_{lj}\xi_l\delta_{ik}+\sum_{l}T_{li}\xi_{l}\delta_{jk}\Big)=0
  .
 \end{align}
 Thus, the symmetric trace-free gradient on symmetric trace-free tensors is not $\C$-elliptic in $2$ dimensions.
\end{proof}

\begin{remark}
 Note that the symmetric trace-free gradient on symmetric trace-free tensors is $\R$-elliptic in all dimensions $d\geq2$. For the proof, we follow the arguments above and stop after the first case. The second case is not relevant since \(\boldsymbol{\xi} \in \mathbb{R}^d\) when checking $\R$-ellipticity.
\end{remark}

\begin{remark}
  With the lack of $\C$-ellipticity for \(d=2\) in \cref{lem:CelliptSTF}, we encounter a phenomenon similar
  to the known missing $\C$-ellipticity of the symmetric trace-free gradient for vector fields in two dimensions (cf.~\cite{Reshetnyak,dainGeneralizedKornInequality2006a,GRVS}), though at one tensor rank higher.
\end{remark}

\begin{lemma}[Tensor-valued Korn inequality]\label{lem:tensorKorn}%
 Let $d\ge3$, $\Omega\subset\R^d$ be a bounded Lipschitz domain and $\boldsymbol\sigma\in\hil^1(\Omega;\R^{d\times d}_{\stf})$. There exists a constant $c=c(\Omega)>0$ such that
 \begin{align}
  \norm{\boldsymbol\sigma}_{\hil^1(\Omega)}\le c\left(\norm{\Stf\D\boldsymbol \sigma}_{\lebe^2(\Omega)}+\norm{\boldsymbol\sigma}_{\lebe^2(\Omega)}\right).
 \end{align}
\end{lemma}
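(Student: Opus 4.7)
The plan is to obtain this as a direct consequence of Nečas's characterization (\cref{lem:Necas}) applied to the differential operator $\mathbb{A} \coloneqq \Stf \circ \D$ restricted to symmetric trace-free tensor fields, once the appropriate $\C$-ellipticity has been identified. Concretely, I would take $V = \R^{d\times d}_{\stf}$, $\tilde V = \R^{d\times d\times d}$, and $q=2$ in \cref{lem:Necas}. The operator $\mathbb{A}$ is clearly a linear homogeneous first-order differential operator with constant coefficients, mapping $\hil^1(\Omega; \R^{d\times d}_{\stf})$ into $\lebe^2(\Omega;\R^{d\times d\times d})$, so the framework of Nečas's lemma applies without modification.

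The decisive input is the $\C$-ellipticity of $\mathbb{A}$ on $V$, which is precisely the content of \cref{lem:CelliptSTF} (established in full detail in the two-case analysis preceding it, and extended to $d>3$ by adjusting the prefactors). By \cref{lem:Necas}, $\C$-ellipticity is equivalent to the existence of a constant $c=c(\mathbb{A},\Omega)>0$ such that
\begin{equation*}
  \norm{\boldsymbol{\sigma}}_{\sobo^{1,2}(\Omega)} \le c\,\bigl(\norm{\boldsymbol{\sigma}}_{\lebe^2(\Omega)} + \norm{\mathbb{A}\boldsymbol{\sigma}}_{\lebe^2(\Omega)}\bigr)
\end{equation*}
for all $\boldsymbol{\sigma}\in\sobo^{1,2}(\Omega; V) = \hil^1(\Omega;\R^{d\times d}_{\stf})$. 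Since $\mathbb{A}\boldsymbol{\sigma} = \Stf \D \boldsymbol{\sigma}$ by definition, this is exactly the claimed inequality.

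Strictly speaking, Nečas's lemma is usually stated for a target space $\tilde V$ without further structure, so the only point worth verifying is that restricting both domain and codomain to the appropriate subspaces (namely $V = \R^{d\times d}_{\stf}$ and the image of $\Stf$ inside $\R^{d\times d\times d}$) does not alter the hypothesis: the $\C$-ellipticity condition \cref{eq:Cellipt} is a purely pointwise injectivity condition on the symbol map $\mathbb{A}[\boldsymbol{\xi}]\colon V_{\C}\to \tilde V_{\C}$, and this is precisely what was checked in \cref{lem:CelliptSTF}. No further step is needed and I do not foresee an obstacle; the only subtlety one must be careful with is that the statement is restricted to $d\ge 3$, matching the range for which $\C$-ellipticity holds (for $d=2$ the counterexample in the proof of \cref{lem:CelliptSTF} shows that the corresponding Korn-type inequality must fail as well).
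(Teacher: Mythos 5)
Your proof is correct and follows exactly the paper's own argument: invoke \cref{lem:Necas} with $V=\R^{d\times d}_{\stf}$, $\tilde V=\R^{d\times d\times d}$, $q=2$, and supply the $\C$-ellipticity of $\Stf\circ\D$ from \cref{lem:CelliptSTF}. Nothing is missing, and your remark about the $d\ge3$ restriction matching the range of $\C$-ellipticity is apt.
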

\begin{proof}
In the preceding calculations, we showed that the differential operator $\mathbb{A}\coloneqq\Stf\circ\,\D$ is $\C$-elliptic for symmetric, trace-free tensor fields. Thus, the condition \cref{eq:Cellipt} is satisfied, and we conclude by \cref{lem:Necas}.
\end{proof}
With the above estimates, we already obtain the coercivity of the bilinear forms \(a\) and \(\bar{d}\) from \cref{s_mixedform} since
\begin{equation}
\begin{aligned}
 a(\boldsymbol s,\boldsymbol s)
 &=
 \tfrac{24}{25}\operatorname{Kn} \norm{\sym\D \boldsymbol s}_{\lebe^2(\Omega)}^2+\tfrac{12}{25}\operatorname{Kn}\norm{\div \boldsymbol s}_{\lebe^2(\Omega)}^2+\tfrac{4}{15}\tfrac{1}{\operatorname{Kn}}\norm{\boldsymbol s}_{\lebe^2(\Omega)}^2\\
 &\quad
 +\tfrac12\tfrac{1}{\tilde{\chi}}\norm{s_n}_{\lebe^2(\Gamma)}^2
 +\tfrac{12}{25}\tilde{\chi} \textstyle{\sum_{i=1}^2} \norm{s_{t_i}}_{\lebe^2(\Gamma)}^2\\
 &\ge
 \min\left\{\tfrac{24}{25}\operatorname{Kn},\tfrac{4}{15}\tfrac{1}{\operatorname{Kn}}\right\}\left(\norm{\sym\D \boldsymbol s}_{\lebe^2(\Omega)}^2+\norm{\boldsymbol s}_{\lebe^2(\Omega)}^2\right) \\
 &\overset{\mathclap{\eqref{eq:classKorn}}}{\ge}
 c_0 \norm{\boldsymbol s}_{\hil^1(\Omega)}^2, \label{eq:forA}
\end{aligned}
\end{equation}
as well as
\begin{equation}
  \begin{aligned}
 \bar{d}((\boldsymbol\sigma,p),(\boldsymbol\sigma,p))
 &=
 \operatorname{Kn}\norm{\Stf\D\boldsymbol\sigma}_{\lebe^2(\Omega)}^2+\tfrac12\tfrac{1}{\operatorname{Kn}}\norm{\boldsymbol\sigma}_{\lebe^2(\Omega)}^2
 +\tfrac98\tilde{\chi}\norm{\sigma_{nn}}_{\lebe^2(\Gamma)}^2
 \\
 &\quad
 +\tilde{\chi}\norm{\sigma_{t_{1}t_{1}}+\tfrac12\sigma_{nn}}_{\lebe^2(\Gamma)}^2
 +\tilde{\chi}\norm{\sigma_{t_{1}t_{2}}}_{\lebe^2(\Gamma)}^2
 \\
 &\quad
 +\tfrac{1}{\tilde{\chi}} \textstyle{\sum_{i=1}^2} \norm{\sigma_{nt_{i}}}_{\lebe^2(\Gamma)}^2
 +\epsilon^w\tilde{\chi}\norm{p+\sigma_{nn}}_{\lebe^2(\Gamma)}^2
 \\
 &\ge \min\left\{\operatorname{Kn},\tfrac12\tfrac{1}{\operatorname{Kn}}\right\}\left(\norm{\Stf\D\boldsymbol\sigma}_{\lebe^2(\Omega)}^2+\norm{\boldsymbol\sigma}_{\lebe^2(\Omega)}^2\right)
 \\
 &\overset{\mathclap{\text{\cref{lem:tensorKorn}}}}{\ge}
 c_1 \norm{\boldsymbol\sigma}_{\hil^1(\Omega)}^2. \label{eq:forD}
\end{aligned}
\end{equation}

\subsection{Right inverse of the matrix-valued divergence operator}\label{sec:rightInverseOfDivergence}
The ellipticity property of the previous section also yields the second main theoretical tool regarding the inversion of the matrix-valued divergence. But first, we recall the classical result about vector fields (see, e.g.,~\cite[Lem.\ 11.2.3]{brennerMathematicalTheoryFinite2008} or~\cite[Satz 6.3]{Braess}).
\begin{lemma}\label{lem:Invdiv0}%
  Let $d\ge2$, and let $\Omega\subset\R^d$ be a bounded Lipschitz domain. Then there exists a constant $c=c(\Omega)>0$, such that for all $\kappa\in\lebe^2(\Omega;\R)$, we find a $\boldsymbol{t}\in\hil^1(\Omega;\R^{d})$ satisfying
  \begin{align}
   -\div \boldsymbol t = \kappa \quad \text{and}\quad \norm{\boldsymbol t}_{\hil^1(\Omega)}\le c\,\norm{\kappa}_{\lebe^2(\Omega)}.
  \end{align}
 \end{lemma}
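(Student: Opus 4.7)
The plan is to reduce this to the classical Bogovskii construction by splitting off the mean value of \(\kappa\). Since \(\boldsymbol{t}\) is not constrained on \(\Gamma\), the divergence theorem \(\int_\Omega \div \boldsymbol{t}\,\mathrm{d}\boldsymbol{x} = \int_\Gamma \boldsymbol{t}\boldsymbol{\cdot}\boldsymbol{n}\,\mathrm{d}l\) imposes no integral compatibility on \(\kappa\), so the mean-free and constant parts can be inverted independently and added.

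Concretely, set \(\bar\kappa \coloneqq |\Omega|^{-1}\int_\Omega \kappa\,\mathrm{d}\boldsymbol{x}\) and \(\tilde\kappa \coloneqq \kappa - \bar\kappa\), which has vanishing mean. Cauchy--Schwarz then gives \(|\bar\kappa| \le |\Omega|^{-1/2}\|\kappa\|_{\lebe^2(\Omega)}\) and \(\|\tilde\kappa\|_{\lebe^2(\Omega)} \le 2\|\kappa\|_{\lebe^2(\Omega)}\). For the mean-free piece I would invoke the classical Bogovskii/Nečas theorem on bounded Lipschitz domains (precisely the version given in the cited references), which yields a \(\boldsymbol{t}_1 \in \hil^1_0(\Omega;\R^d)\) with \(-\div\boldsymbol{t}_1 = \tilde\kappa\) and \(\|\boldsymbol{t}_1\|_{\hil^1(\Omega)} \le c_1\|\tilde\kappa\|_{\lebe^2(\Omega)}\). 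For the constant piece I would fix any \(\boldsymbol{x}_0 \in \Omega\) and choose the affine vector field \(\boldsymbol{t}_2(\boldsymbol{x}) \coloneqq -\tfrac{\bar\kappa}{d}(\boldsymbol{x} - \boldsymbol{x}_0)\), which satisfies \(-\div \boldsymbol{t}_2 = \bar\kappa\) pointwise and, because \(\Omega\) is bounded, \(\|\boldsymbol{t}_2\|_{\hil^1(\Omega)} \le c_\Omega\,|\bar\kappa|\).

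Setting \(\boldsymbol{t} \coloneqq \boldsymbol{t}_1 + \boldsymbol{t}_2\) then gives \(-\div \boldsymbol{t} = \tilde\kappa + \bar\kappa = \kappa\), and the triangle inequality combined with the three estimates above delivers the asserted bound with a constant \(c=c(\Omega)>0\). The only substantial step is the invocation of the Bogovskii right inverse on \(\hil^1_0\), whose proof is technical but classical --- one decomposes \(\Omega\) into finitely many star-shaped Lipschitz subdomains, constructs an explicit solution on each via Bogovskii's Calderón--Zygmund-type singular integral, and glues by a partition of unity; this is precisely the content of the cited references, so no additional work is needed. An equivalent abstract route would be to derive the inf-sup condition for \(\div:\hil^1_0 \to \lebe^2_0\) from Nečas's negative-norm estimate and conclude surjectivity by the closed range theorem, but this does not bypass the mean-value splitting needed to pass from \(\hil^1_0\) to the ambient \(\hil^1\).
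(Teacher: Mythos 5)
Your proof is correct. The paper does not actually prove \cref{lem:Invdiv0}; it cites Brenner--Scott and Braess, both of which give the right inverse of $\div$ from the mean-zero subspace $\lebe^2_0(\Omega)$ into $\hil^1_0(\Omega;\R^d)$. Your decomposition $\kappa = \tilde\kappa + \bar\kappa$ is exactly the step needed to pass from that statement to the one asserted here for arbitrary $\kappa\in\lebe^2(\Omega)$, and it is legitimate precisely because $\boldsymbol{t}$ carries no boundary condition, so the divergence theorem imposes no compatibility; the affine field $\boldsymbol{t}_2$ absorbs the constant part with the correct scaling, and all your bounds check out (in fact $\norm{\tilde\kappa}_{\lebe^2(\Omega)} \le \norm{\kappa}_{\lebe^2(\Omega)}$ directly by $\lebe^2$-orthogonality of $\tilde\kappa$ and the constants, though your factor $2$ is fine). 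It is worth contrasting with the strategy the paper uses for the tensor-valued analogue \cref{lem:InvDiv1}: there one solves a coercive variational problem $-\Div\mathscr{A}[\D\boldsymbol v]=\boldsymbol u$, $\boldsymbol v\in\hil^1_0$, by Lax--Milgram and sets $\boldsymbol\tau = \mathscr{A}[\D\boldsymbol v]$, invoking elliptic $\hil^2$-regularity of $\boldsymbol v$. The scalar analogue of that route would solve $-\Delta v = \kappa$, $v\in\hil^1_0(\Omega)$, and take $\boldsymbol t = \boldsymbol{\nabla}v$, which hinges on $v\in\hil^2(\Omega)$; on a mere Lipschitz domain that regularity is delicate, whereas your Bogovskii-plus-mean-value argument avoids it entirely at the cost of invoking the Bogovskii/Nečas machinery that the paper's citations supply. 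Both routes are valid for the present statement; yours is the one that directly closes the small gap between the cited results (mean-zero, $\hil^1_0$ range) and the lemma as stated.
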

Now, we extend this result to higher-order tensors. More precisely, we construct a right inverse of the matrix-valued divergence operator that depends continuously on the data. Since the divergence acts row-wise in
\begin{equation}
  \Div \boldsymbol\tau = \boldsymbol u,,
\end{equation}
one could solve the problem row by row using \cref{lem:Invdiv0}. However, the resulting matrix field would generally not satisfy any additional structure. The key point of the next lemma is that we construct a right inverse of the matrix divergence that still solves the equation while producing a matrix field with the desired structural constraints (e.g., symmetry or symmetry and vanishing trace).
\begin{lemma}\label{lem:InvDiv1}%
  Let $d\ge2$, $\Omega\subset\R^d$ be a bounded Lipschitz domain, and $\mathscr{A}:\R^{d\times d}\to\R^{d\times d}$ be an orthogonal projection, such that $\mathbb{A}\coloneqq\mathscr{A}[\boldsymbol{\cdot}\otimes\boldsymbol{\nabla}]$ is an elliptic differential operator from $\R^d$ to $\R^{d\times d}$. Let us further denote by $\R^{d\times d}_{\mathscr{A}}\coloneqq\{\boldsymbol{\tau}\in\R^{d\times d}:\mathscr{A}[\boldsymbol{\tau}]=\boldsymbol{\tau}\}$. Then there exists a constant $c=c(\mathscr{A},\Omega)>0$, such that for all $\boldsymbol u\in\lebe^2(\Omega;\R^d)$, we find a $\boldsymbol{\tau}\in\hil^1(\Omega;\R^{d\times d}_{\mathscr{A}})$ satisfying
  \begin{align}
   -\Div \boldsymbol\tau = \boldsymbol u \quad \text{and}\quad \norm{\boldsymbol\tau}_{\hil^1(\Omega)}\le c\,\norm{\boldsymbol u}_{\lebe^2(\Omega)}.
  \end{align}
 \end{lemma}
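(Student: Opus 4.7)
The plan is to adapt the classical proof of \cref{lem:Invdiv0} by solving an auxiliary linear elliptic system on a smooth enlargement of $\Omega$ and transferring the resulting potential back to $\Omega$. Choose an open ball $B\supset\overline{\Omega}$, and extend $\boldsymbol{u}\in\lebe^2(\Omega;\R^d)$ by zero to $\tilde{\boldsymbol{u}}\in\lebe^2(B;\R^d)$ with $\norm{\tilde{\boldsymbol{u}}}_{\lebe^2(B)}=\norm{\boldsymbol{u}}_{\lebe^2(\Omega)}$. The idea is to search for $\boldsymbol{\tau}$ in the form $\mathscr{A}[\D\boldsymbol{v}]$ for a suitable potential $\boldsymbol{v}\in\hil^1_0(B;\R^d)$; since $\mathscr{A}$ is a self-adjoint orthogonal projection, integration by parts against test fields $\boldsymbol{w}\in\hil^1_0(B;\R^d)$ identifies the governing equation on $B$ as the second-order system $-\Div\mathscr{A}[\D\boldsymbol{v}]=\tilde{\boldsymbol{u}}$.

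To obtain such a $\boldsymbol{v}$, I would apply Lax--Milgram to the symmetric bilinear form $a(\boldsymbol{v},\boldsymbol{w})\coloneqq\int_B\mathscr{A}[\D\boldsymbol{v}]\boldsymbol{:}\mathscr{A}[\D\boldsymbol{w}]\,\textnormal{d}\boldsymbol{x}$ on $\hil^1_0(B;\R^d)$. Continuity is immediate from the fact that $\mathscr{A}$ has operator norm at most one. For coercivity, the ellipticity hypothesis on $\mathbb{A}=\mathscr{A}[\boldsymbol{\cdot}\otimes\boldsymbol{\nabla}]$ is exactly the $\R$-ellipticity of \cref{lem:Aronszajn}, which yields $\norm{\D\boldsymbol{v}}_{\lebe^2(B)}\le c\,\norm{\mathscr{A}[\D\boldsymbol{v}]}_{\lebe^2(B)}$ on $\hil^1_0(B;\R^d)$; combined with the Poincaré inequality, this shows $a(\boldsymbol{v},\boldsymbol{v})\ge c\,\norm{\boldsymbol{v}}_{\hil^1(B)}^2$. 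Lax--Milgram then produces a unique $\boldsymbol{v}\in\hil^1_0(B;\R^d)$ with $a(\boldsymbol{v},\boldsymbol{w})=\int_B\tilde{\boldsymbol{u}}\boldsymbol{\cdot}\boldsymbol{w}\,\textnormal{d}\boldsymbol{x}$ for all $\boldsymbol{w}\in\hil^1_0(B;\R^d)$ and $\norm{\boldsymbol{v}}_{\hil^1(B)}\le c\,\norm{\boldsymbol{u}}_{\lebe^2(\Omega)}$.

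To upgrade $\boldsymbol{v}\in\hil^1_0(B;\R^d)$ to $\hil^2$-regularity, I would invoke \cref{rem:LH}: the $\R$-ellipticity of $\mathbb{A}$ implies the Legendre--Hadamard condition for the quadratic form underlying $a$, so the Euler--Lagrange system is strongly elliptic with constant coefficients, while the auxiliary domain $B$ has a $\hold^\infty$ boundary. Under these hypotheses, classical interior and boundary $\hil^2$-regularity for linear elliptic systems (for example via the Nirenberg difference-quotient technique, or by citing the standard results of Nečas or Giaquinta) yields $\boldsymbol{v}\in\hil^2(B;\R^d)$ together with $\norm{\boldsymbol{v}}_{\hil^2(B)}\le c\,\norm{\boldsymbol{u}}_{\lebe^2(\Omega)}$.

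Finally, set $\boldsymbol{\tau}\coloneqq\mathscr{A}[\D\boldsymbol{v}]|_\Omega$. Since $\mathscr{A}$ maps into $\R^{d\times d}_{\mathscr{A}}$ pointwise and $\D\boldsymbol{v}\in\hil^1(B;\R^{d\times d})$, we have $\boldsymbol{\tau}\in\hil^1(\Omega;\R^{d\times d}_{\mathscr{A}})$, restriction of the equation to $\Omega$ gives $-\Div\boldsymbol{\tau}=\tilde{\boldsymbol{u}}|_\Omega=\boldsymbol{u}$, and the estimate $\norm{\boldsymbol{\tau}}_{\hil^1(\Omega)}\le c\,\norm{\boldsymbol{u}}_{\lebe^2(\Omega)}$ follows by chaining the previous bounds. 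The main obstacle is precisely the regularity transfer on $B$: one must justify that mere Legendre--Hadamard ellipticity (as opposed to the stronger Legendre condition), paired with constant coefficients and a smooth boundary, already suffices for the $\hil^2$-estimate, which is classical but warrants a careful pointer to the elliptic regularity theory for systems.
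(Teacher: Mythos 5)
Your proposal is correct and uses the same core construction as the paper's proof: solve a variational problem for a potential $\boldsymbol v$ via Lax--Milgram on the bilinear form $\int\mathscr{A}[\D\boldsymbol v]\boldsymbol{:}\mathscr{A}[\D\boldsymbol w]\,\mathrm{d}\boldsymbol x$, obtain coercivity from \cref{lem:Aronszajn} plus Poincar\'e, upgrade to $\hil^2$ by elliptic regularity, and set $\boldsymbol\tau\coloneqq\mathscr{A}[\D\boldsymbol v]$. The one place you diverge is in the choice of auxiliary domain: you extend $\boldsymbol u$ by zero to a smooth ball $B\supset\overline\Omega$, solve there, and restrict $\boldsymbol\tau$ to $\Omega$, whereas the paper poses the auxiliary Dirichlet problem directly on the Lipschitz domain $\Omega$ and invokes boundary $\hil^2$-regularity there. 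Your variant is the more robust one: global $\hil^2$-regularity up to the boundary for second-order elliptic systems with constant coefficients and homogeneous Dirichlet data is a smooth- (or convex-, or $C^{1,1}$-) domain statement, and on a general Lipschitz domain one cannot expect better than roughly $\hil^{3/2}$; by moving the problem to a ball you make the regularity step unconditional, and the restriction to $\Omega$ is harmless because $-\Div\mathscr{A}[\D\boldsymbol v]=\tilde{\boldsymbol u}$ holds pointwise a.e.\ on $B$. As for the worry you flag at the end, Legendre--Hadamard ellipticity is indeed enough here: for constant-coefficient operators it yields G\r{a}rding's inequality, which is exactly what the difference-quotient argument needs for the interior and boundary $\hil^2$-estimates on the smooth ball. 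One small remark: after Lax--Milgram you already have $\norm{\boldsymbol v}_{\hil^1(B)}\le c\norm{\boldsymbol u}_{\lebe^2(\Omega)}$, so the lower-order term in the a priori estimate $\norm{\boldsymbol v}_{\hil^2(B)}\le c(\norm{\tilde{\boldsymbol u}}_{\lebe^2(B)}+\norm{\boldsymbol v}_{\lebe^2(B)})$ is absorbed directly, and you do not need the intermediate Poincar\'e--Korn chain \cref{eq:norm_v} that the paper uses to close its estimate on $\Omega$.
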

 \begin{proof}
  Consider the bounded linear form
  \begin{align}
   \ell(\boldsymbol\varphi)\coloneqq\int_{\Omega}\skalarProd{\boldsymbol u}{\boldsymbol\varphi} \,\mathrm{d} \boldsymbol x
   ,
  \end{align}
  and the bounded and symmetric bilinear form
  \begin{align}
   B(\boldsymbol v,\boldsymbol \varphi)\coloneqq\int_{\Omega} \mathscr{A}[\D \boldsymbol v]\boldsymbol{:} \mathscr{A}[\D\boldsymbol \varphi]\,\mathrm{d}\boldsymbol x
   ,
  \end{align}
 for $\boldsymbol v,\boldsymbol \varphi\in\hil^1_0(\Omega;\R^d)$. By the ellipticity of $\mathscr{A}[\boldsymbol{\cdot}\otimes\boldsymbol{\nabla}]$, we can apply \cref{lem:Aronszajn} to conclude the coercivity of $B$. Using Lax--Milgram, we then find a unique solution $\boldsymbol v\in\hil^1_0(\Omega;\R^d)$, such that
 \begin{align}\label{eq:LM}
   B(\boldsymbol v,\boldsymbol \varphi) = \ell(\boldsymbol\varphi)\quad \forall\, \boldsymbol\varphi \in\hil^1_0(\Omega;\R^d).
 \end{align}
 Thus, in particular, for all $\boldsymbol\varphi \in\hil^1_0(\Omega;\R^d)$, we have that
 \begin{equation}
  \begin{aligned}
  \int_{\Omega}\skalarProd{\boldsymbol u}{\boldsymbol\varphi} \,\mathrm{d} \boldsymbol x &=\int_{\Omega} \mathscr{A}[\D \boldsymbol v]\boldsymbol{:} \mathscr{A}[\D\boldsymbol \varphi]\,\mathrm{d}\boldsymbol x \overset{\mathclap{\mathscr{A} \text{o.p.}}}{=} \int_{\Omega} \mathscr{A}[\D \boldsymbol v]\boldsymbol{:} \D\boldsymbol \varphi\,\mathrm{d}\boldsymbol x \\
  &\overset{\mathclap{\boldsymbol \varphi\in\hil^1_0}}{=} -\int_{\Omega}\skalarProd{\Div \mathscr{A}[\D \boldsymbol v]}{\boldsymbol\varphi}\,\mathrm{d}\boldsymbol x,
  \end{aligned}
 \end{equation}
 meaning that $\boldsymbol v\in\hil^1_0(\Omega;\R^d)$ is a weak solution of
 \begin{align}\label{eq:bvpv}
  \left\{
  \begin{alignedat}{2}
   -\Div \mathscr{A}[\D \boldsymbol v] &= \boldsymbol u \quad &&\text{in } \Omega,\\
   \hphantom{ -\Div \mathscr{A}[\D ] }\boldsymbol v &= 0 \quad &&\text{on } \partial\Omega.
  \end{alignedat}
  \right.
 \end{align}
 Since $\mathscr{A}[\boldsymbol{\cdot}\otimes\boldsymbol{\nabla}]$ is elliptic, we can apply classical elliptic regularity theory (cf.~\cite[Thm.~10.3, 3.18]{giustiDirectMethodsCalculus2005} with the Legendre--Hadamard condition from \cref{rem:LH} and also the literature mentioned after \cite[Prop. 2.11]{gmeinederPartialRegularityBV2019}) to conclude that for the unique solution, we moreover have
 \begin{align}\label{eq:elliptreg}
  \boldsymbol v\in\hil^2(\Omega;\R^d)\quad \text{and}\quad \norm{\boldsymbol v}_{\hil^2(\Omega)}\le \tilde{c}\,(\norm{\boldsymbol u}_{\lebe^2(\Omega)}+\norm{\boldsymbol v}_{\lebe^2(\Omega)}).
 \end{align}
 Furthermore, by the Poincaré and Korn inequalities, we estimate that
 \begin{align}\label{eq:norm_v}
  \norm{\boldsymbol v}_{\lebe^2(\Omega)} \le c_1 \norm{\D\boldsymbol v}_{\lebe^2(\Omega)}\le c_2 \norm{\mathscr{A}[\D \boldsymbol v]}_{\lebe^2(\Omega)}.
 \end{align}
 Now, we show that $\boldsymbol\tau \coloneqq\mathscr{A}[\D \boldsymbol v]$ does the trick. Since $ \boldsymbol v\in\hil^2(\Omega;\R^d)\cap\hil^1_0(\Omega;\R)$ is the unique solution of \cref{eq:bvpv}, with this choice, we have $\boldsymbol\tau\in\hil^1(\Omega;\R^{d\times d}_{\mathscr{A}})$ and $\Div\boldsymbol\tau = -\boldsymbol u$. Moreover, we have that
 \begin{equation}
 \begin{aligned}
  \norm{\boldsymbol\tau}_{\lebe^2(\Omega)}^2&=\norm{\mathscr{A}[\D \boldsymbol v]}_{\lebe^2(\Omega)}^2 \overset{\mathclap{\eqref{eq:LM}}}{=}\int_{\Omega}\skalarProd{\boldsymbol u}{\boldsymbol v} \,\mathrm{d}\boldsymbol x \overset{\mathclap{\text{CBS}}}{\le}\norm{\boldsymbol u}_{\lebe^2(\Omega)}\norm{\boldsymbol v}_{\lebe^2(\Omega)}\\
  &\overset{\mathclap{\eqref{eq:norm_v}}}{\le} c_2\norm{\boldsymbol u}_{\lebe^2(\Omega)}\norm{\mathscr{A}[\D \boldsymbol v]}_{\lebe^2(\Omega)} = c_2\norm{\boldsymbol u}_{\lebe^2(\Omega)}\norm{\boldsymbol\tau}_{\lebe^2(\Omega)},
 \end{aligned}
 \end{equation}
 such that
 \begin{align}\label{eq:normtau}
   \norm{\boldsymbol\tau}_{\lebe^2(\Omega)}&\le c_2\norm{\boldsymbol u}_{\lebe^2(\Omega)}
   ,
 \end{align}
 and we can estimate the $\hil(\Div)$-norm of $\boldsymbol\tau$ as
 \begin{equation}
  \norm{\boldsymbol \tau}_{\hil(\Div)(\Omega)}=\sqrt{\norm{\boldsymbol\tau}_{\lebe^2(\Omega)}^2+\norm{\Div \boldsymbol\tau}_{\lebe^2(\Omega)}^2} \le \sqrt{c_2^2 +1 }\norm{\boldsymbol u}_{\lebe^2(\Omega)}.
\end{equation}
But we can also estimate the full $\hil^1$-norm using \cref{eq:normtau} and
\begin{equation}
\begin{aligned}\label{eq:normgradtau}
  \norm{\D\boldsymbol \tau}_{\lebe^2(\Omega)}
  &\overset{\mathclap{\mathscr{A} \text{o.p.}}}{\le}
  \norm{\D^2 \boldsymbol v}_{\lebe^2(\Omega)}
  \le
  \norm{\boldsymbol v}_{\hil^2(\Omega)} \overset{\mathclap{\eqref{eq:elliptreg}}}{\le}\tilde{c}\left(\norm{\boldsymbol u}_{\lebe^2(\Omega)}+\norm{\boldsymbol v}_{\lebe^2(\Omega)}\right)
  \\
  &
  \overset{\mathclap{\eqref{eq:norm_v}}}{\le}
  \tilde{c}\left(\norm{\boldsymbol u}_{\lebe^2(\Omega)}+ c_2 \norm{\boldsymbol \tau}_{\lebe^2(\Omega)}\right)
  \overset{\mathclap{\eqref{eq:normtau}}}{\le}
  \tilde{c}\left(1+c_2^2\right)\norm{\boldsymbol u}_{\lebe^2(\Omega)}
  .
\end{aligned}
\end{equation}
 Hence, a combination of \cref{eq:normtau} and \cref{eq:normgradtau} concludes the proof.
 \end{proof}


\section{Existence of unique weak solutions for the linear R13 equations}
We are now ready to prove the existence of a unique weak solution to the linear R13 equations. Namely, given some \((\mathcal{F},\mathcal{G}) \in \mathrm{V}'\times \mathrm{Q}'\), there exists a unique \((\boldsymbol{\mathcal{U}}, \boldsymbol{\mathcal{P}}) \in  \mathrm{V}\times\mathrm{Q}\) such that
\begin{equation}\label{eq:LR13weak}
  \left\{
  \begin{alignedat}{2}
    \mathcal{A}(\boldsymbol{\mathcal{U}},\boldsymbol{\mathcal{V}}) + \mathcal{B}(\boldsymbol{\mathcal{V}},\boldsymbol{\mathcal{P}}) &= \mathcal{F}(\boldsymbol{\mathcal{V}})
    \quad & \forall\, \boldsymbol{\mathcal{V}}{} &\in \mathrm{V},
    \\
    \mathcal{B}(\boldsymbol{\mathcal{U}},\boldsymbol{\mathcal{Q}}) &= \mathcal{G}(\boldsymbol{\mathcal{Q}})
    \quad & \forall\, \boldsymbol{\mathcal{Q}}{} &\in \mathrm{Q}.
 \end{alignedat}
  \right.
\end{equation}
To this end, we follow the classical strategy, which we briefly recall below.

\begin{theorem}[{\cite[Thm.~1.1]{brezziMixedHybridFinite1991}}]\label{thm:existence}
 Let $\mathrm{V}$ and $\mathrm{Q}$ be two Hilbert spaces, and \(a(\cdot,\cdot):\mathrm{V} \times \mathrm{V}\to\R\) and \(b(\cdot,\cdot):\mathrm{V} \times \mathrm{Q}\to\R\) be two continuous linear forms. Let us suppose that the range of the operator \(B\), associated with \(b(\cdot,\cdot)\), is closed in \(\mathrm{Q}'\); that is, there exists a constant \(k_0>0\) such that
  \begin{equation}
    \sup_{v \in \mathrm{V}} \frac{b(v,q)}{{\|v\|}_\mathrm{V}} \ge k_0 {\|q\|}_{\mathrm{Q} / \ker B^\top}
    = k_0 \Big( \inf_{q_0 \in \ker B^\top} {\|q+q_0\|}_{\mathrm{Q}} \Big)
    .
  \end{equation}
  If, moreover, \(a(\cdot,\cdot)\) is invertible on \(\ker B\), that is, there exists \(\alpha_0 > 0\), such that
  \begin{align}
    \inf_{u_0 \in \ker B} \sup_{v_0 \in \ker B} \frac{a(u_0,v_0)}{{\|u_0\|}_\mathrm{V} {\|v_0\|}_\mathrm{V}} \ge \alpha_0
    \quad \text{and}\quad
    \inf_{v_0 \in \ker B} \sup_{u_0 \in \ker B} \frac{a(u_0,v_0)}{{\|u_0\|}_\mathrm{V} {\|v_0\|}_\mathrm{V}} \ge \alpha_0
    ,
  \end{align}
  then there exists a solution to
\begin{equation}
  \left\{
    \begin{alignedat}{2}
      a(u,v) + b(v,p) &= f(v)
      \quad & \forall\, v &\in \mathrm{V},
      \\
      b(u,q) &= g(q)
      \quad & \forall\, q &\in \mathrm{Q},
   \end{alignedat}
    \right.
  \end{equation}
  for any \(f \in \mathrm{V}'\) and any \(g \in \operatorname{im} B\). The first component \(u\) is unique, and \(p\) is defined up to an element of \(\ker B^\top\). Moreover, one has the bounds
  \begin{align}
    \|u\|_{\mathrm{V}}
    &\le
    \frac{1}{\alpha_0} \|f\|_{\mathrm{V}'} + \left(\frac{\|a\|}{\alpha_0} + 1\right) \frac{1}{k_0} \|g\|_{\mathrm{Q}'}
    \\
    \| p \|_{\mathrm{Q} / \ker B^\top}
    &\le
    \frac{1}{k_0} \left( 1 + \frac{\| a \|}{\alpha_0} \right) \| f \|_{\mathrm{V}'} + \frac{\| a \|}{k_0^2} \left( 1 + \frac{\| a \|}{\alpha_0} \right) \| g \|_{\mathrm{Q}'}.
  \end{align}
\end{theorem}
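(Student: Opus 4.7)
The plan is a two-step decomposition: first lift the constraint equation $Bu=g$ using the inf--sup condition on $b$ and the Banach closed range theorem, then solve for the $\ker B$-component of $u$ using the two-sided inf--sup hypothesis on $a$, and finally recover $p$ again via the closed range theorem. Concretely, the inf--sup condition on $b$ together with the closed range theorem ensures that $B:\mathrm{V}\to\mathrm{Q}'$ has closed range and that its restriction $B|_{(\ker B)^\perp}:(\ker B)^\perp\to\operatorname{im}B$ is a topological isomorphism whose inverse has operator norm at most $1/k_0$. For the prescribed $g\in\operatorname{im}B$ this produces $u_g\in(\ker B)^\perp$ with $Bu_g=g$ and $\|u_g\|_\mathrm{V}\le(1/k_0)\|g\|_{\mathrm{Q}'}$.

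Next, I would write $u=u_0+u_g$ with $u_0\in\ker B$ to be determined, and test the first equation against $v\in\ker B$ in order to eliminate the $b(v,p)$-contribution. This reduces the primal problem to finding $u_0\in\ker B$ satisfying
\[
a(u_0,v) \;=\; f(v)-a(u_g,v), \qquad \forall\,v\in\ker B.
\]
The right-hand side is a bounded linear functional on the closed subspace $\ker B$, so the two-sided inf--sup hypothesis on $a|_{\ker B\times\ker B}$ is exactly the Banach--Ne\v{c}as--Babu\v{s}ka condition there, producing a unique $u_0$ with $\|u_0\|_\mathrm{V}\le(1/\alpha_0)(\|f\|_{\mathrm{V}'}+\|a\|\,\|u_g\|_\mathrm{V})$. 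Setting $u:=u_0+u_g$ and defining $\ell(v):=f(v)-a(u,v)$, I observe that $\ell\in\mathrm{V}'$ annihilates $\ker B$ by construction, hence lies in $\operatorname{im}B^\top$ by the closed range theorem, yielding $p\in\mathrm{Q}$ with $B^\top p=\ell$, unique modulo $\ker B^\top$.

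The stated bounds then follow by elementary bookkeeping. The estimate on $\|u\|_\mathrm{V}$ is obtained via the triangle inequality from the bounds on $\|u_g\|_\mathrm{V}$ and $\|u_0\|_\mathrm{V}$ derived above. For the multiplier, the inf--sup condition on $b$ gives $\|[p]\|_{\mathrm{Q}/\ker B^\top}\le(1/k_0)\|\ell\|_{\mathrm{V}'}$, and the further continuity estimate $\|\ell\|_{\mathrm{V}'}\le\|f\|_{\mathrm{V}'}+\|a\|\,\|u\|_\mathrm{V}$, combined with the bound on $\|u\|_\mathrm{V}$, produces the announced factors $(1+\|a\|/\alpha_0)$.

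There is no genuine obstacle in this proof beyond the careful bookkeeping of constants: the two inf--sup conditions supply exactly the non-degeneracy needed to invert the constraint and the kernel restriction of $a$, while the closed range theorem provides the two abstract isomorphisms. The substantive effort in the present paper lies not in this abstract theorem itself but in verifying its hypotheses for the concrete R13 forms $\mathcal{A}$ and $\mathcal{B}$, where the tensor-valued Korn-type inequalities of \cref{lem:CelliptSTF,lem:tensorKorn} together with the right inverse of the tensorial divergence from \cref{lem:InvDiv1} enter to deliver, respectively, the kernel coercivity of $\mathcal{A}$ and the inf--sup condition for $\mathcal{B}$.
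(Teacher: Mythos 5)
The paper does not prove this theorem at all; it is quoted verbatim as \cite[Thm.~1.1]{brezziMixedHybridFinite1991} and used as a black box, so there is no in-paper proof to compare against. Your sketch is the standard Brezzi splitting argument (lift $g$ to $u_g\in(\ker B)^\perp$ via the inf--sup condition and closed range theorem, solve for the $\ker B$-component via Banach--Ne\v{c}as--Babu\v{s}ka on $a|_{\ker B\times\ker B}$, recover $p$ from the annihilator $\ell=f-a(u,\cdot)\in(\ker B)^\circ=\operatorname{im}B^\top$), and the constant bookkeeping you indicate reproduces the two stated bounds exactly; this is correct and is precisely the textbook proof the authors are invoking by citation.
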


\subsection{Coercivity on the kernel}\label{sec:CoerKern}
To continue proving the coercivity of \(\mathcal{A}\) on the kernel of $B$ (i.e., the linear map associated with \(\mathcal{B}\); see \cref{s_mixedform}), we first need the following characterization.
\begin{lemma}[Kernel of \(B\)]\label{lem:kerB}
  We have
  \begin{equation}
    \ker B = \left\{ (\boldsymbol{\sigma},\boldsymbol{s},p) \in \V : \Div\boldsymbol{\sigma} = - \boldsymbol{\nabla} p , \div\boldsymbol{s}=0 \right\}
    .
  \end{equation}
\end{lemma}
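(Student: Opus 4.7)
The proof is essentially a direct unfolding of the definitions combined with the fundamental lemma of the calculus of variations, so I expect it to be short.

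The plan is to start from the definition $\ker B = \{\boldsymbol{\mathcal{U}} \in \V : \mathcal{B}(\boldsymbol{\mathcal{U}},\boldsymbol{\mathcal{Q}}) = 0 \text{ for all } \boldsymbol{\mathcal{Q}} \in \mathrm{Q}\}$ and substitute the explicit form of $\mathcal{B}$ from \eqref{eq_bilinearFormCalB} together with the expressions for $e$, $g$, and $b$. This yields the condition
\begin{equation*}
  -\int_\Omega (\Div\boldsymbol{\sigma} + \boldsymbol{\nabla} p) \boldsymbol{\cdot} \boldsymbol{v} \,\mathrm{d}\boldsymbol{x} - \int_\Omega \kappa \, \div\boldsymbol{s} \,\mathrm{d}\boldsymbol{x} = 0,
\end{equation*}
required to hold for every $(\boldsymbol{v},\kappa) \in \lebe^2(\Omega;\R^3)\times\lebe^2(\Omega;\R)$.

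Next I would exploit the product structure of $\mathrm{Q}$: choosing $\kappa = 0$ decouples the vector condition, and choosing $\boldsymbol{v} = \boldsymbol{0}$ decouples the scalar one. Because $\boldsymbol{\sigma} \in \hil^1(\Omega;\R^{3\times3}_\stf)$, $p \in \tilde{\hil}^{1}(\Omega;\R)$, and $\boldsymbol{s} \in \hil^1(\Omega;\R^3)$, the quantities $\Div\boldsymbol{\sigma} + \boldsymbol{\nabla} p$ and $\div\boldsymbol{s}$ are well-defined elements of $\lebe^2(\Omega;\R^3)$ and $\lebe^2(\Omega;\R)$, respectively. The duality pairing of $\lebe^2$ with itself then allows me to test against themselves: picking $\boldsymbol{v} = \Div\boldsymbol{\sigma} + \boldsymbol{\nabla} p$ forces $\|\Div\boldsymbol{\sigma} + \boldsymbol{\nabla} p\|_{\lebe^2}^2 = 0$, and analogously $\|\div\boldsymbol{s}\|_{\lebe^2}^2 = 0$, giving the two pointwise (almost everywhere) identities of the claim.

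The converse inclusion is immediate: any $(\boldsymbol{\sigma},\boldsymbol{s},p)$ satisfying $\Div\boldsymbol{\sigma} = -\boldsymbol{\nabla} p$ and $\div\boldsymbol{s} = 0$ makes the integrand of $\mathcal{B}$ vanish pointwise for every admissible test pair. I do not foresee any real obstacle, but one small point of care is that the test space for $\kappa$ in \eqref{eq_functionSpaceV} is the full $\lebe^2(\Omega;\R)$ (with no mean-value restriction, even in the case $\epsilon^{\mathrm{w}}=0$ where $p$ itself is constrained), so no compatibility issue arises when concluding $\div\boldsymbol{s} = 0$ almost everywhere.
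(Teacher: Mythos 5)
Your proof is correct and follows essentially the same route as the paper's: unfold the definition of $\mathcal{B}$, decouple the two integral conditions using the product structure of $\mathrm{Q}$, and conclude pointwise identities by testing against arbitrary $\lebe^2$ functions. The only difference is that you spell out the final step (testing against $\Div\boldsymbol{\sigma}+\boldsymbol{\nabla}p$ and $\div\boldsymbol{s}$ themselves) where the paper simply says ``directly conclude'', and you note — correctly — that $\kappa$ ranges over all of $\lebe^2(\Omega;\mathbb{R})$ without a mean-value constraint.
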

\begin{proof}
  By the definition of the kernel, we have for all \(\boldsymbol{\mathcal{U}} \in \ker B\):
  \begin{alignat}{2}
    \mathcal{B}(\boldsymbol{\mathcal{U}},\boldsymbol{\mathcal{Q}})
    &=
    0
    \quad& \forall\, \boldsymbol{\mathcal{Q}} &\in \mathrm{Q}
    \\
    \Leftrightarrow
    -
    e(\boldsymbol{v},\boldsymbol{\sigma})
    -
    g(p,\boldsymbol{v})
    -
    b(\kappa, \boldsymbol{s})
    &=
    0
    \quad& \forall\, (\boldsymbol{v},\kappa) &\in \mathrm{Q}
    .
  \end{alignat}
 Since this holds for any \((\boldsymbol{v},\kappa) \in \mathrm{Q}\), we extract two separated conditions, i.e., \(e(\boldsymbol{v},\boldsymbol{\sigma}) + g(p,\boldsymbol{v}) = 0\) and \(b(\kappa, \boldsymbol{s}) = 0\). We insert the explicit expressions of these bilinear forms from \cref{s_mixedform} and directly conclude that \(\Div\boldsymbol{\sigma} = - \boldsymbol{\nabla} p\) and \(\div\boldsymbol{s}=0\) define the kernel of \(B\).
\end{proof}

\begin{lemma}[Coercivity of $\mathcal{A}$ on $\ker B$]\label{lem:coercivityAonKernel}
The bilinear form $\mathcal{A}$ is coercive on $\ker B$, i.e., there exists a constant \(\alpha_0 > 0\) such that \(\mathcal{A}(\boldsymbol{\mathcal{U}},\boldsymbol{\mathcal{U}}) \ge \alpha_0 \norm{\boldsymbol{\mathcal{U}}}_\V^2\) for all \(\boldsymbol{\mathcal{U}} \in \ker B\).
\end{lemma}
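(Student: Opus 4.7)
The plan is to exploit the antisymmetric structure of $\mathcal{A}$, combine the coercivity estimates \cref{eq:forA,eq:forD} already established for $a$ and $\bar{d}$, and then use the kernel condition from \cref{lem:kerB} together with a Poincaré-type argument to recover the missing $\hil^1$-control on the pressure $p$.

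First, I would observe that for $\boldsymbol{\mathcal{V}}=\boldsymbol{\mathcal{U}}=(\boldsymbol{\sigma},\boldsymbol{s},p)$ the cross terms cancel:
\begin{equation}
  \mathcal{A}(\boldsymbol{\mathcal{U}},\boldsymbol{\mathcal{U}})
  = a(\boldsymbol{s},\boldsymbol{s}) + c(\boldsymbol{s},\boldsymbol{\sigma}) - c(\boldsymbol{s},\boldsymbol{\sigma}) + \bar{d}((\boldsymbol{\sigma},p),(\boldsymbol{\sigma},p))
  = a(\boldsymbol{s},\boldsymbol{s}) + \bar{d}((\boldsymbol{\sigma},p),(\boldsymbol{\sigma},p)).
\end{equation}
Applying \cref{eq:forA} and \cref{eq:forD} directly gives
\begin{equation}
  \mathcal{A}(\boldsymbol{\mathcal{U}},\boldsymbol{\mathcal{U}})
  \ge c_0 \norm{\boldsymbol{s}}_{\hil^1(\Omega)}^2 + c_1 \norm{\boldsymbol{\sigma}}_{\hil^1(\Omega)}^2.
\end{equation}
Hence the only missing ingredient is control of $\norm{p}_{\hil^1(\Omega)}^2$ by the right-hand side. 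This is where I would invoke the kernel characterization.

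By \cref{lem:kerB}, any $\boldsymbol{\mathcal{U}}\in\ker B$ satisfies $\boldsymbol{\nabla} p = -\Div\boldsymbol{\sigma}$ in $\Omega$, so
\begin{equation}
  \norm{\boldsymbol{\nabla} p}_{\lebe^2(\Omega)}
  \le \norm{\Div\boldsymbol{\sigma}}_{\lebe^2(\Omega)}
  \le \norm{\boldsymbol{\sigma}}_{\hil^1(\Omega)}.
\end{equation}
It remains to bound $\norm{p}_{\lebe^2(\Omega)}$. I would split into the two cases already built into the definition of $\tilde{\hil}{}^1(\Omega;\R)$: if $\epsilon^{\textnormal{w}}=0$, then $p$ has zero mean on $\Omega$, and the Poincaré--Wirtinger inequality gives $\norm{p}_{\lebe^2(\Omega)}\le C\,\norm{\boldsymbol{\nabla} p}_{\lebe^2(\Omega)}$ directly. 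If $\epsilon^{\textnormal{w}}>0$, I would use the extra boundary contribution in \cref{eq:bilinaerform_dbar}, namely $\epsilon^{\textnormal{w}}\tilde{\chi}\norm{p+\sigma_{nn}}_{\lebe^2(\Gamma)}^2$, and combine it with the continuous trace estimate $\norm{\sigma_{nn}}_{\lebe^2(\Gamma)}\le C\norm{\boldsymbol{\sigma}}_{\hil^1(\Omega)}$ to infer $\norm{p}_{\lebe^2(\Gamma)}\le C(\bar{d}((\boldsymbol{\sigma},p),(\boldsymbol{\sigma},p))^{1/2}+\norm{\boldsymbol{\sigma}}_{\hil^1(\Omega)})$. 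Then a standard Friedrichs-type inequality $\norm{p}_{\lebe^2(\Omega)}\le C(\norm{\boldsymbol{\nabla} p}_{\lebe^2(\Omega)}+\norm{p}_{\lebe^2(\Gamma)})$ on the Lipschitz domain $\Omega$ closes the gap.

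Combining these bounds yields $\norm{p}_{\hil^1(\Omega)}^2\le C\,(\norm{\boldsymbol{\sigma}}_{\hil^1(\Omega)}^2+\bar{d}((\boldsymbol{\sigma},p),(\boldsymbol{\sigma},p)))$, so that after possibly shrinking the constants,
\begin{equation}
  \mathcal{A}(\boldsymbol{\mathcal{U}},\boldsymbol{\mathcal{U}})
  \ge \alpha_0\,\big(\norm{\boldsymbol{s}}_{\hil^1(\Omega)}^2 + \norm{\boldsymbol{\sigma}}_{\hil^1(\Omega)}^2 + \norm{p}_{\tilde{\hil}{}^1(\Omega)}^2\big)
  = \alpha_0\,\norm{\boldsymbol{\mathcal{U}}}_{\V}^2,
\end{equation}
for some $\alpha_0>0$. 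The main obstacle in executing this plan is the case $\epsilon^{\textnormal{w}}>0$: one must carefully balance the boundary contribution of $\bar{d}$ against the trace of $\sigma_{nn}$ so that the constant absorption does not destroy the coercivity; the zero-mean branch is essentially immediate.
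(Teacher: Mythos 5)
Your proof is correct and takes essentially the same approach as the paper: cancel the antisymmetric $c$-terms so that $\mathcal{A}(\boldsymbol{\mathcal{U}},\boldsymbol{\mathcal{U}}) = a(\boldsymbol s,\boldsymbol s) + \bar d((\boldsymbol\sigma,p),(\boldsymbol\sigma,p))$, apply the coercivity estimates \eqref{eq:forA} and \eqref{eq:forD}, pass $\|\boldsymbol{\nabla} p\|_{\lebe^2}$ through the kernel constraint $\Div\boldsymbol\sigma=-\boldsymbol{\nabla} p$, and close via Poincaré ($\epsilon^{\textnormal{w}}=0$) or Friedrichs ($\epsilon^{\textnormal{w}}>0$). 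If anything, you are slightly more explicit than the paper in the $\epsilon^{\textnormal{w}}>0$ branch about extracting $\|p\|_{\lebe^2(\Gamma)}$ from the $\|p+\sigma_{nn}\|_{\lebe^2(\Gamma)}$ term in $\bar d$, but this is the same route.
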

\begin{proof}
 By \cref{lem:kerB}, with $\boldsymbol{\mathcal{U}} = (\boldsymbol{\sigma},\boldsymbol{s},p) \in \ker B$, we have $\Div\boldsymbol\sigma=-\boldsymbol{\nabla} p$. Thus, we estimate that
 \begin{align}\label{eq:fromKerB}
 \norm{\boldsymbol\sigma}_{\hil^1(\Omega)}^2
 &\ge
 \norm{\D\boldsymbol\sigma}_{\lebe^2(\Omega)}^2
 \overset{\mathclap{(\ast)}}{\ge}
 \norm{\Div\boldsymbol\sigma}_{\lebe^2(\Omega)}^2
 =
 \norm{\boldsymbol{\nabla} p}_{\lebe^2(\Omega)}^2
 ,
 \end{align}
 using, in the lower bound (\(\ast\)), the CBS inequality for the Frobenius inner products in \(\int_\Omega \|\Div \boldsymbol{\sigma}\|_{\mathbb{R}^3}^2 \,\mathrm{d} \boldsymbol{x} = \int_\Omega \| [\langle (\delta_{kl})_{k,l=1}^3 , (\partial_k \sigma_{il})_{k,l=1}^3 \rangle_{\textnormal{F}}]_{i=1}^3\|_{\mathbb{R}^3}^2 \,\mathrm{d} \boldsymbol{x} \le \|\bbone_3\|_{\textnormal{F}}^2 \int_\Omega \|\D\boldsymbol{\sigma}\|_{\textnormal{F}}^2 \,\mathrm{d} \boldsymbol{x}\). Then, we conclude with  
 \begin{equation}
 \begin{aligned}\label{eq:coercivityA}
  \mathcal{A}(\boldsymbol{\mathcal{U}},\boldsymbol{\mathcal{U}})&=a(\boldsymbol s,\boldsymbol s)
  +
  \bar{d}((\boldsymbol\sigma,p),(\boldsymbol\sigma,p))
  \\
  &\overset{\mathclap{\eqref{eq:forA},\eqref{eq:forD}}}{\ge}
  c_2\,\left(\norm{\boldsymbol s}_{\hil^1(\Omega)}^2+\norm{\boldsymbol\sigma}_{\hil^1(\Omega)}^2\right) + \tilde{\chi}\epsilon^{\textnormal{w}} \|p\|^2_{\mathrm{L}^2(\Gamma)} \\
  &\overset{\mathclap{\eqref{eq:fromKerB}}\phantom{1}}{\ge}
  \frac{c_2}{2}
  \left(\norm{\boldsymbol s}_{\hil^1(\Omega)}^2+\norm{\boldsymbol\sigma}_{\hil^1(\Omega)}^2\right)
  +
  \frac{c_2}{2}\norm{\boldsymbol{\nabla} p}_{\lebe^2(\Omega)}^2
  +
  \tilde{\chi}\epsilon^{\textnormal{w}} \|p\|^2_{\mathrm{L}^2(\Gamma)}\\
  &\overset{\mathclap{(\ast)}}{\ge}
  \alpha_0\left(\norm{\boldsymbol\sigma}_{\hil^1(\Omega)}^2+\norm{\boldsymbol s}_{\hil^1(\Omega)}^2+\norm{p}_{\hil^1(\Omega)}^2\right)
  =
  \alpha_0\norm{\boldsymbol{\mathcal{U}}}_\V^2 \quad \forall\, \boldsymbol{\mathcal{U}} \in \ker B
  ,
 \end{aligned}
 \end{equation}
 where the step (\(\ast\)) in \cref{eq:coercivityA}, depending on the value of \(\epsilon^{\textnormal{w}}\) from \cref{bcs_3d}, either uses Poincaré's inequality $\norm{p}_{\lebe^2(\Omega)}\le c\,\norm{\boldsymbol{\nabla} p}_{\lebe^2(\Omega)}$ for $p\in\hil^1(\Omega;\R)$ with $\int_{\Omega} p\,\mathrm{d}\boldsymbol x=0$ (\(\epsilon^{\textnormal{w}} = 0\)) or Friedrichs's inequality $\norm{p}_{\lebe^2(\Omega)}\le c\,(\norm{\boldsymbol{\nabla} p}_{\lebe^2(\Omega)} + \norm{p}_{\lebe^2(\Gamma)})$ for $p\in\hil^1(\Omega;\R)$ (\(\epsilon^{\textnormal{w}} > 0\)) in accordance with the definition of \(\mathrm{V}\) in \cref{eq_functionSpaceV}.
\end{proof}
\begin{remark}
Interestingly, for the coercivity of $\mathcal{A}$ in \cref{lem:coercivityAonKernel}, we used the kernel property only to include the pressure $p$ in the estimate by relating it to the stress \(\boldsymbol{\sigma}\). It becomes clear that we must proceed in this manner when considering the case of \(\epsilon^{\textnormal{w}}=0\), in which \(\mathcal{A}\) would not contain any \(p\)-terms at all. The property \(\operatorname{div} \boldsymbol{s} = 0\), however, is not needed for coercivity.
\end{remark}

\subsection{Sup-condition}\label{sec:supCond}
The existence of the right inverse of the matrix divergence from \cref{lem:Invdiv0}, combined with results in the vectorial case, yields the following result:
\begin{lemma}[Sup-condition of $\mathcal{B}$]\label{lem:supCond}%
 There exists a constant $k_0 > 0$ such that for all $\boldsymbol{\mathcal{Q}}\in\mathrm{Q}$, it holds
 \begin{align}
  \sup_{\boldsymbol{\mathcal{V}}\in\mathrm{V}}\frac{\mathcal{B}(\boldsymbol{\mathcal{V}},\boldsymbol{\mathcal{Q}})}{\norm{\boldsymbol{\mathcal{V}}}_{\mathrm{V}}}\ge k_0 \norm{\boldsymbol{\mathcal{Q}}}_{\mathrm{Q}}.
 \end{align}
\end{lemma}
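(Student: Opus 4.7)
The strategy is to construct, for any given $\boldsymbol{\mathcal{Q}}=(\boldsymbol{v},\kappa)\in\mathrm{Q}$, an explicit test element $\boldsymbol{\mathcal{V}}=(\boldsymbol{\psi},\boldsymbol{r},q)\in\mathrm{V}$ that realises the supremum up to a uniform constant. The key structural observation is that the three summands of
$\mathcal{B}(\boldsymbol{\mathcal{V}},\boldsymbol{\mathcal{Q}})=-e(\boldsymbol{v},\boldsymbol{\psi})-g(q,\boldsymbol{v})-b(\kappa,\boldsymbol{r})$
each pair the data $(\boldsymbol{v},\kappa)$ with only one of the three test components $\boldsymbol{\psi},\boldsymbol{r},q$. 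Hence I would build $\boldsymbol{r}$ to absorb $\|\kappa\|_{\lebe^2}^2$, build $\boldsymbol{\psi}$ to absorb $\|\boldsymbol{v}\|_{\lebe^2}^2$, and simply set $q=0$ to kill the cross-term $-g(q,\boldsymbol{v})$ (which is admissible in both cases of the definition of $\tilde{\hil}{}^1$).

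For the first component, apply the classical right-inverse of the vector divergence (\cref{lem:Invdiv0}) to obtain $\boldsymbol{r}\in\hil^1(\Omega;\R^3)$ with $-\div\boldsymbol{r}=\kappa$ and $\|\boldsymbol{r}\|_{\hil^1(\Omega)}\le c_1\,\|\kappa\|_{\lebe^2(\Omega)}$; the definition of $b$ then gives $-b(\kappa,\boldsymbol{r})=\|\kappa\|_{\lebe^2(\Omega)}^2$. For the second component, I would invoke \cref{lem:InvDiv1} with $\mathscr{A}=\stf$ to produce $\boldsymbol{\psi}\in\hil^1(\Omega;\R^{3\times 3}_{\stf})$ with $-\Div\boldsymbol{\psi}=\boldsymbol{v}$ and $\|\boldsymbol{\psi}\|_{\hil^1(\Omega)}\le c_2\,\|\boldsymbol{v}\|_{\lebe^2(\Omega)}$, whence $-e(\boldsymbol{v},\boldsymbol{\psi})=\|\boldsymbol{v}\|_{\lebe^2(\Omega)}^2$.

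The one nontrivial prerequisite is that \cref{lem:InvDiv1} requires ellipticity of the symbol $\stf[\,\cdot\,\otimes\boldsymbol{\nabla}]$ acting on $\R^3$-valued vector fields; note that this is distinct from (and in fact \emph{easier} than) the tensor-valued $\C$-ellipticity already established for \cref{lem:tensorKorn} since here the argument sits in $\R^d$ rather than $\R^{3\times3}_{\stf}$, and only $\R$-ellipticity is needed. A brief symbol check suffices: if $\stf[\boldsymbol{u}\otimes\boldsymbol{\xi}]=\tfrac12(\boldsymbol{u}\otimes\boldsymbol{\xi}+\boldsymbol{\xi}\otimes\boldsymbol{u})-\tfrac13(\skalarProd{\boldsymbol{u}}{\boldsymbol{\xi}})\bbone_3=0$ for some $\boldsymbol{\xi}\in\R^3\setminus\{\boldsymbol{0}\}$, multiplying by $\boldsymbol{\xi}$ from the right forces $\boldsymbol{u}\parallel\boldsymbol{\xi}$; writing $\boldsymbol{u}=\alpha\boldsymbol{\xi}$ and re-inserting yields $\alpha(1+\tfrac13)\abs{\boldsymbol{\xi}}^2=0$, hence $\boldsymbol{u}=\boldsymbol{0}$.

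Combining the three ingredients, the explicit test vector $\boldsymbol{\mathcal{V}}=(\boldsymbol{\psi},\boldsymbol{r},0)$ satisfies
\begin{equation*}
\mathcal{B}(\boldsymbol{\mathcal{V}},\boldsymbol{\mathcal{Q}})=\|\boldsymbol{v}\|_{\lebe^2(\Omega)}^2+\|\kappa\|_{\lebe^2(\Omega)}^2=\|\boldsymbol{\mathcal{Q}}\|_{\mathrm{Q}}^2
\quad\text{and}\quad
\|\boldsymbol{\mathcal{V}}\|_{\mathrm{V}}\le\sqrt{c_1^2+c_2^2}\,\|\boldsymbol{\mathcal{Q}}\|_{\mathrm{Q}},
\end{equation*}
so dividing yields the claim with $k_0\coloneqq (c_1^2+c_2^2)^{-1/2}$. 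The main obstacle is exclusively the symbol verification above; once the right-inverse of the symmetric trace-free matrix divergence is available, the decoupled block structure of $\mathcal{B}$ reduces the rest to straightforward bookkeeping.
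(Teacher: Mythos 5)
Your proposal matches the paper's proof: construct the test element $(\boldsymbol{\tau},\boldsymbol{t},0)$ via the right-inverses of the matrix and vector divergences (\cref{lem:InvDiv1} with $\mathscr{A}=\stf$ and \cref{lem:Invdiv0}) and exploit the decoupled block structure of $\mathcal{B}$ to get the inf-sup constant $1/c$. Your explicit $\R$-ellipticity check of $\stf[\,\cdot\,\otimes\boldsymbol{\xi}]$ on $\R^3$ is a welcome addition the paper leaves implicit (citing it only as a classical example), though the re-insertion coefficient should come out as $\tfrac23$ rather than $1+\tfrac13$ — a harmless arithmetic slip that does not alter the conclusion.
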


\begin{proof}
 For $\boldsymbol{\mathcal{V}}=(\boldsymbol{\psi},\boldsymbol{r},q)\in\mathrm{V}$ and $\boldsymbol{\mathcal{Q}}=(\boldsymbol{v},\kappa)\in\mathrm{Q}$, the bilinear form \(\mathcal{B}\) reads as
 \begin{equation}
 \begin{aligned}
  \mathcal{B}(\boldsymbol{\mathcal{V}},\boldsymbol{\mathcal{Q}})&=-e(\boldsymbol{v},\boldsymbol{\psi})-g(q,\boldsymbol{v})-b(\kappa,\boldsymbol{r})\\
  &=
  -\int_{\Omega}\left(\Div\boldsymbol{\psi}\boldsymbol\cdot\boldsymbol{v}+\boldsymbol{v}\boldsymbol{\cdot}\boldsymbol{\nabla}q+\kappa\div\boldsymbol{r}\right)\,\mathrm{d} \boldsymbol{x}
  .\label{eq:binlin}
 \end{aligned}
 \end{equation}
Now, let $\boldsymbol{\mathcal{Q}}=(\boldsymbol{v},\kappa)\in\mathrm{Q}$ be given. Then, by \cref{lem:Invdiv0,lem:InvDiv1}, we find a $\boldsymbol{\tau}\in\hil^1(\Omega;\R^{3\times3}_{\stf})$ and a $\boldsymbol{t}\in\hil^1(\Omega;\R^3)$, such that
\begin{subequations}
\begin{alignat}{2}
 -\Div \boldsymbol{\tau} &= \boldsymbol{v},
 &\quad
 -\div\boldsymbol{t} &= \kappa, \label{eq:Divs}
 \\
 \norm{\boldsymbol{\tau}}_{\hil^1(\Omega)} &\le c \norm{\boldsymbol{v}}_{\lebe^2(\Omega)},
 &\quad
 \norm{\boldsymbol{t}}_{\hil^1(\Omega)} &\le c\norm{\kappa}_{\lebe^2(\Omega)},
 \label{eq:norms}
\end{alignat}
\end{subequations}
for some constant $c>0$. In particular, we choose $\tilde{\boldsymbol{\mathcal{V}}}\coloneqq(\boldsymbol{\tau},\boldsymbol{t},0)\in\mathrm{V}$ and obtain
\begin{multline}
 \sup_{\boldsymbol{\mathcal{V}}\in\mathrm{V}}\frac{\mathcal{B}(\boldsymbol{\mathcal{V}},\boldsymbol{\mathcal{Q}})}{\norm{\boldsymbol{\mathcal{V}}}_{\mathrm{V}}}
 \ge
 \frac{\mathcal{B}(\tilde{\boldsymbol{\mathcal{V}}},\boldsymbol{\mathcal{Q}})}{\|\tilde{\boldsymbol{\mathcal{V}}}\|_{\mathrm{V}}} \overset{\eqref{eq:binlin},\eqref{eq:Divs}}{=}\frac{\displaystyle\int_{\Omega}\left(\boldsymbol{v}^2+\kappa^2\right)\,\mathrm{d}\boldsymbol{x}}{(\norm{\boldsymbol{\tau}}_{\hil^1}^2+\norm{\boldsymbol{t}}_{\hil^1}^2)^{\frac12}}  
 \\
 \overset{\mathclap{\eqref{eq:norms}}}{\ge}
 \frac{\norm{\boldsymbol{v}}_{\lebe^2}^2+\norm{\kappa}_{\lebe^2}^2}{c(\norm{\boldsymbol{v}}_{\lebe^2}^2+\norm{\kappa}_{\lebe^2}^2)^{\frac12}}  
 =
 \frac{1}{c}\left(\norm{\boldsymbol{v}}_{\lebe^2}^2+\norm{\kappa}_{\lebe^2}^2\right)^{\frac12}=\frac1c\norm{\boldsymbol{\mathcal{Q}}}_{\mathrm{Q}}.  
\end{multline}
\end{proof}
\begin{remark}
  Remarkably, the choice of \(\tilde{\mathcal{V}}\) had a trivial third component in the proof of \cref{lem:supCond}, corresponding to the pressure \(p\), which is not needed for the sup-condition.
\end{remark}

\begin{corollary}
  The associated linear map $B:\mathrm{V}\to\mathrm{Q}'$ is an isomorphism.
\end{corollary}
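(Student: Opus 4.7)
The plan is to deduce the isomorphism property directly from the inf--sup estimate in \cref{lem:supCond} via the Banach closed range theorem. Interpreting the supremum on the left-hand side of the bound in \cref{lem:supCond} as the dual norm $\|B^\top \boldsymbol{\mathcal{Q}}\|_{\mathrm{V}'}$, the estimate reads $\|B^\top \boldsymbol{\mathcal{Q}}\|_{\mathrm{V}'} \ge k_0 \|\boldsymbol{\mathcal{Q}}\|_{\mathrm{Q}}$ for every $\boldsymbol{\mathcal{Q}} \in \mathrm{Q}$, so the adjoint $B^\top: \mathrm{Q} \to \mathrm{V}'$ is bounded below; in particular, it is injective and has closed range.

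By the closed range theorem, $B: \mathrm{V} \to \mathrm{Q}'$ then has closed range equal to ${(\ker B^\top)}^\perp = \mathrm{Q}'$, where the last equality uses the injectivity of $B^\top$. Hence $B$ is surjective. Continuity of $B$ was already noted when introducing $\mathcal{B}$ in \cref{s_mixedform}, so the open mapping theorem yields that $B$ descends to a topological isomorphism $\mathrm{V}/\ker B \to \mathrm{Q}'$. Equivalently, the restriction of $B$ to the Hilbert-orthogonal complement ${(\ker B)}^\perp$ is an isomorphism onto $\mathrm{Q}'$, which is the natural interpretation of the corollary --- consistent with the nontrivial characterization of $\ker B$ obtained in \cref{lem:kerB}.

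The main ingredients --- continuity of $\mathcal{B}$, the inf--sup bound from \cref{lem:supCond}, and the closed range theorem --- are all in hand, so I do not expect any genuine obstacle. The corollary is essentially a functional-analytic repackaging of the sup-condition: the substantive work already lies in \cref{lem:supCond}, where the right-inverses of the scalar- and matrix-valued divergences from \cref{lem:Invdiv0,lem:InvDiv1} were used to build the explicit test function $\tilde{\boldsymbol{\mathcal{V}}} = (\boldsymbol{\tau},\boldsymbol{t},0)$ realizing the lower bound.
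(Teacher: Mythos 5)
Your proposal is correct and follows essentially the same route as the paper: the paper cites Braess (Satz 3.6) and verifies its hypotheses --- continuity of $\mathcal{B}$, the sup-condition, and the separating condition that $\mathcal{B}(\tilde{\boldsymbol{\mathcal{V}}},\boldsymbol{\mathcal{Q}})\neq0$ for nonzero $\boldsymbol{\mathcal{Q}}$ --- whereas you unpack what that citation does internally via the closed range theorem. One small economy in your version: since \cref{lem:supCond} states the sup-condition with the \emph{full} norm $\|\boldsymbol{\mathcal{Q}}\|_{\mathrm{Q}}$ (not the quotient norm), the lower bound $\|B^\top\boldsymbol{\mathcal{Q}}\|_{\mathrm{V}'}\ge k_0\|\boldsymbol{\mathcal{Q}}\|_{\mathrm{Q}}$ already forces $\ker B^\top=\{0\}$, so the paper's separate verification of the non-degeneracy condition via the explicit $\tilde{\boldsymbol{\mathcal{V}}}$ is, strictly speaking, redundant once the strong sup-condition is in hand; your reading of ``isomorphism'' as the induced map $\mathrm{V}/\ker B\to\mathrm{Q}'$ (equivalently $B|_{(\ker B)^\perp}$) is also the intended one, consistent with the nontrivial kernel found in \cref{lem:kerB}.
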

\begin{proof}
  This property follows (see, e.g.,~\cite[Satz 3.6]{Braess}) since the corresponding bilinear form $\mathcal{B}$ is continuous, fulfills the sup-condition, and for all $\boldsymbol{\mathcal{Q}}\neq0$, we find a $\tilde{\boldsymbol{\mathcal{V}}}\in\mathrm{V}$ such that $\mathcal{B}(\tilde{\boldsymbol{\mathcal{V}}},\boldsymbol{\mathcal{Q}})\neq0$. For the latter, consider $\tilde{\boldsymbol{\mathcal{V}}}$ from the proof of \cref{lem:supCond}, for which we showed that
  \begin{equation}
  \mathcal{B}(\tilde{\boldsymbol{\mathcal{V}}},\boldsymbol{\mathcal{Q}})=\norm{\boldsymbol{\mathcal{Q}}}^2_{\mathrm{Q}}>0.
\end{equation}
\end{proof}
\begin{corollary}\label{cor:kerBtTrivial}
  For the transpose $B^\top:\mathrm{Q}\to\mathrm{V}'$, given via $\langle \boldsymbol{\mathcal{V}},B^\top\boldsymbol{\mathcal{Q}}\rangle_{\mathrm{V}\times \mathrm{Q}'}\coloneqq\mathcal{B}(\boldsymbol{\mathcal{V}},\boldsymbol{\mathcal{Q}})$, it holds
  \begin{equation}
    \ker B^\top=\{0\}.
  \end{equation}
\end{corollary}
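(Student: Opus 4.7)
The plan is to give an essentially one-line argument that reduces the corollary to the sup-condition established in \cref{lem:supCond}. Fix any $\boldsymbol{\mathcal{Q}} \in \ker B^\top$; by the definition of $B^\top$ this means $\mathcal{B}(\boldsymbol{\mathcal{V}},\boldsymbol{\mathcal{Q}}) = 0$ for every $\boldsymbol{\mathcal{V}} \in \mathrm{V}$. Consequently the supremum appearing on the left-hand side of the inequality in \cref{lem:supCond} vanishes, so that $0 \ge k_0 \|\boldsymbol{\mathcal{Q}}\|_{\mathrm{Q}}$, and since $k_0 > 0$ we conclude $\boldsymbol{\mathcal{Q}} = 0$.

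As an alternative phrasing (which I would mention as a remark rather than run through in detail), one may simply invoke the preceding corollary: since $B : \mathrm{V} \to \mathrm{Q}'$ has already been shown to be an isomorphism, so is its Banach-space adjoint $B^\top : \mathrm{Q} \to \mathrm{V}'$ by standard duality, and in particular $B^\top$ is injective. Both routes rely on exactly the same analytic input, namely the inf-sup estimate, so there is no genuine obstacle here; the whole content of the corollary is a restatement of \cref{lem:supCond} in the language of the transpose operator.

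The only point that deserves a moment of care is the identification of $\ker B^\top$ via the bilinear form: because $\mathcal{B}(\cdot,\cdot)$ is not symmetric (indeed $\mathcal{B} : \mathrm{V} \times \mathrm{Q} \to \mathbb{R}$ and the two factors live in different spaces), one has to make sure the duality pairing used to define $B^\top$ is the one written out in the statement, i.e.\ $\langle \boldsymbol{\mathcal{V}}, B^\top \boldsymbol{\mathcal{Q}} \rangle_{\mathrm{V} \times \mathrm{V}'} = \mathcal{B}(\boldsymbol{\mathcal{V}},\boldsymbol{\mathcal{Q}})$. Once this is noted, the argument above is immediate and the corollary is complete.
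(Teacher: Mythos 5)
Your argument is correct and matches what the paper leaves implicit: the sup-condition of \cref{lem:supCond} gives $0 = \sup_{\boldsymbol{\mathcal{V}}} \mathcal{B}(\boldsymbol{\mathcal{V}},\boldsymbol{\mathcal{Q}})/\norm{\boldsymbol{\mathcal{V}}}_{\mathrm{V}} \ge k_0 \norm{\boldsymbol{\mathcal{Q}}}_{\mathrm{Q}}$ for any $\boldsymbol{\mathcal{Q}} \in \ker B^\top$, forcing $\boldsymbol{\mathcal{Q}} = 0$; the paper's preceding corollary (that $B$ is an isomorphism) and your alternative-route remark are exactly the same analytic content packaged via duality. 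One small point in your favor: you wrote the duality pairing as $\mathrm{V} \times \mathrm{V}'$, which is the correct target for $B^\top\boldsymbol{\mathcal{Q}} \in \mathrm{V}'$, whereas the paper's displayed subscript $\mathrm{V} \times \mathrm{Q}'$ appears to be a typo.
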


\subsection{Proof of the main result}%
\enlargethispage{\baselineskip}  
We can now use the \cref{lem:supCond,lem:coercivityAonKernel} to obtain the main result.
\begin{theorem}[Well-posedness]\label{thm:mainR13}
  For any \((\mathcal{F},\mathcal{G}) \in \mathrm{V}'\times \mathrm{Q}'\), there exists a unique solution \((\boldsymbol{\mathcal{U}}, \boldsymbol{\mathcal{P}}) \in \mathrm{V}\times\mathrm{Q}\) to the weak linear R13 equations \cref{eq:LR13weak} that fulfills
  \begin{align}
    \|\boldsymbol{\mathcal{U}}\|_{\mathrm{V}}
    &\le
    \frac{1}{\alpha_0} \|\mathcal{F}\|_{\mathrm{V}'} + \left(\frac{\| \mathcal{A} \|}{\alpha_0} + 1\right) \frac{1}{k_0} \| \mathcal{G} \|_{\mathrm{Q}'},
    \\
    \| \boldsymbol{\mathcal{P}} \|_{\mathrm{Q}}
    &\le
    \frac{1}{k_0} \left( 1 + \frac{\| \mathcal{A} \|}{\alpha_0} \right) \| \mathcal{F} \|_{\mathrm{V}'} + \frac{\| \mathcal{A} \|}{k_0^2} \left( 1 + \frac{\| \mathcal{A} \|}{\alpha_0} \right) \| \mathcal{G} \|_{\mathrm{Q}'},
  \end{align}
  with the constants \(\alpha_0, k_0 > 0\) from \cref{lem:coercivityAonKernel,lem:supCond}.
\end{theorem}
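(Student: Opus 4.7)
The plan is to deduce Theorem~\ref{thm:mainR13} as a direct application of the abstract saddle-point framework in Theorem~\ref{thm:existence}, with the roles $(a,b,f,g,u,p) \leftrightarrow (\mathcal{A},\mathcal{B},\mathcal{F},\mathcal{G},\boldsymbol{\mathcal{U}},\boldsymbol{\mathcal{P}})$ and $(\mathrm{V},\mathrm{Q})$ as defined in~\eqref{eq_functionSpaceV}. All the ingredients have been prepared in the preceding sections, so the work reduces to verifying each hypothesis and then invoking the abstract result.

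First, I would note that $\mathcal{A}:\mathrm{V}\times\mathrm{V}\to\mathbb{R}$ and $\mathcal{B}:\mathrm{V}\times\mathrm{Q}\to\mathbb{R}$ are continuous bilinear forms; this follows from repeated Cauchy--Bunyakovsky--Schwarz estimates together with the continuity of the trace operator (as recorded in Appendix~\ref{sec:Appendix}), yielding finite operator norms $\|\mathcal{A}\|$ and $\|\mathcal{B}\|$. Next, the sup-condition required by Theorem~\ref{thm:existence}, namely
\begin{equation*}
  \sup_{\boldsymbol{\mathcal{V}}\in\mathrm{V}} \frac{\mathcal{B}(\boldsymbol{\mathcal{V}},\boldsymbol{\mathcal{Q}})}{\|\boldsymbol{\mathcal{V}}\|_{\mathrm{V}}} \ge k_0\,\|\boldsymbol{\mathcal{Q}}\|_{\mathrm{Q}/\ker B^\top},
\end{equation*}
is Lemma~\ref{lem:supCond}; Corollary~\ref{cor:kerBtTrivial} moreover gives $\ker B^\top=\{0\}$, so the quotient norm coincides with $\|\boldsymbol{\mathcal{Q}}\|_{\mathrm{Q}}$, and in particular $\operatorname{im}B=\mathrm{Q}'$, so \emph{every} $\mathcal{G}\in\mathrm{Q}'$ is admissible as a right-hand side.

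For the conditions on $\mathcal{A}$ restricted to $\ker B$, the coercivity from Lemma~\ref{lem:coercivityAonKernel} supplies a stronger statement than what Theorem~\ref{thm:existence} requires: for any $\boldsymbol{\mathcal{U}}_0\in\ker B$ one obtains, via Cauchy--Schwarz with the test function $\boldsymbol{\mathcal{V}}_0=\boldsymbol{\mathcal{U}}_0$, the bound
\begin{equation*}
  \sup_{\boldsymbol{\mathcal{V}}_0\in\ker B}\frac{\mathcal{A}(\boldsymbol{\mathcal{U}}_0,\boldsymbol{\mathcal{V}}_0)}{\|\boldsymbol{\mathcal{V}}_0\|_{\mathrm{V}}}\ge \frac{\mathcal{A}(\boldsymbol{\mathcal{U}}_0,\boldsymbol{\mathcal{U}}_0)}{\|\boldsymbol{\mathcal{U}}_0\|_{\mathrm{V}}}\ge\alpha_0\,\|\boldsymbol{\mathcal{U}}_0\|_{\mathrm{V}},
\end{equation*}
from which both the inf--sup on $(\boldsymbol{\mathcal{U}}_0,\boldsymbol{\mathcal{V}}_0)$ and its swapped counterpart follow with the same constant $\alpha_0$. (The coercivity proof does not rely on symmetry of $\mathcal{A}$, which is good since the skew $c$-terms break full symmetry.)

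Having verified every hypothesis of Theorem~\ref{thm:existence}, I would conclude the existence of a solution $(\boldsymbol{\mathcal{U}},\boldsymbol{\mathcal{P}})\in\mathrm{V}\times\mathrm{Q}$ to~\eqref{eq:LR13weak}, with $\boldsymbol{\mathcal{U}}$ uniquely determined and $\boldsymbol{\mathcal{P}}$ unique up to $\ker B^\top$; by Corollary~\ref{cor:kerBtTrivial} the latter is trivial, so $\boldsymbol{\mathcal{P}}$ is uniquely determined as well, and $\|\boldsymbol{\mathcal{P}}\|_{\mathrm{Q}/\ker B^\top}=\|\boldsymbol{\mathcal{P}}\|_{\mathrm{Q}}$. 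The quantitative a~priori bounds stated in the theorem are then precisely the quantitative bounds supplied by Theorem~\ref{thm:existence} with the constants $\alpha_0$ and $k_0$ coming from Lemmata~\ref{lem:coercivityAonKernel} and~\ref{lem:supCond}. There is no real obstacle remaining at this point: the entire mathematical content sits in the tensor-valued Korn inequality (Lemma~\ref{lem:tensorKorn}) feeding the coercivity, and the right-inverse of the matrix divergence (Lemma~\ref{lem:InvDiv1}) feeding the sup-condition, both of which have already been established.
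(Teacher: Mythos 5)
Your proposal is correct and follows essentially the same route as the paper: continuity of $\mathcal{A}$ and $\mathcal{B}$ from Appendix~\ref{sec:Appendix}, coercivity of $\mathcal{A}$ on $\ker B$ from Lemma~\ref{lem:coercivityAonKernel}, the sup-condition from Lemma~\ref{lem:supCond}, surjectivity of $B$ and triviality of $\ker B^\top$ from Corollary~\ref{cor:kerBtTrivial}, all fed into Theorem~\ref{thm:existence}. You even spell out a detail the paper leaves implicit — that coercivity supplies both inf--sup conditions on the kernel by choosing the diagonal test function — though the passing appeal to Cauchy--Schwarz there is superfluous (it is just the choice $\boldsymbol{\mathcal{V}}_0 = \boldsymbol{\mathcal{U}}_0$ in the supremum).
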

\begin{proof}
  The existence follows directly from applying \cref{thm:existence} to our situation since $\mathcal{A}$ and $\mathcal{B}$ are both continuous (see \cref{sec:Appendix}), $\mathcal{A}$ is coercive on $\ker B$ by \cref{lem:coercivityAonKernel}, and $\mathcal{B}$ fulfills the sup-condition in \cref{lem:supCond}. Moreover, we have $\operatorname{im} B=\mathrm Q'$ and $\ker B^\top=\{0\}$ by \cref{cor:kerBtTrivial}, such that both components are unique.
\end{proof}
\begin{remark}
  The results in \cref{thm:mainR13}, which at first sight differ from the classical Poisson and Stokes theory, should in fact be compared with corresponding results for their \emph{first-order} systems \cref{eq_stokes_and_poisson_first_order} rather than with the second-order (primal) formulations. In the mixed formulation of Poisson's equation~\cite[Example~3.5]{brezziMixedHybridFinite1991} and in (pseudo)stress-velocity(-pressure) formulations of Stokes' problem, see, e.g.,~\cite{penatiNewMixedMethod2019,carstensenNumericalExperimentsArnoldWinther2012,lepeMixedMethodsVelocityPressurePseudostress2022,gaticaAnalysisVelocityPressure2010,farhloulFiniteElementAnalysis2025}, the scalar field $\theta$ and the velocity $\boldsymbol{u}$ enter only through constraint terms and therefore act as Lagrange multipliers in $\operatorname{L}^{2}(\Omega)$. In contrast, the heat flux and the (pseudo)stress are the variables whose divergence appears. The additional derivative terms in the R13 balance relations \cref{eq_balance2} yield the higher regularity $\boldsymbol{s},\boldsymbol{\sigma} \in \operatorname{H}^{1}(\Omega)$ for each fixed $\operatorname{Kn}>0$, and these terms vanish quadratically as $\operatorname{Kn}\to 0$ in \cref{eq_stokes_and_poisson_first_order}, so that we recover the classical mixed Poisson and Stokes formulations. Moreover, the R13 formulation of the momentum equation \cref{eq_balance1} explicitly contains $\boldsymbol{\nabla}p$. With $\operatorname{Div}\boldsymbol{\sigma}$ and the right-hand side in $\operatorname{L}^{2}(\Omega;\mathbb{R}^{3})$, this implies $p \in \tilde{\operatorname{H}}{}^1(\Omega)\subset\operatorname{L}^{2}(\Omega)$ and thus slightly strengthens, but does not contradict, the classical mixed Stokes theory.  
\end{remark}
\begin{remark}[Regularity]
  \cref{thm:existence} establishes the uniqueness of weak solutions with source terms \(\mathcal{F}\) and \(\mathcal{G}\) in the dual spaces \(\mathrm{V}'\) and \(\mathrm{Q}'\), respectively. In the classical theory of the Stokes system, it is a well-known result by Cattabriga~\cite{cattabrigaSuProblemaContorno1961} (see also, e.g.,~\cite{AmroucheGirault,giraultFiniteElementMethods1986}) that data with higher regularity lead to solutions with higher Sobolev regularity, provided the boundary is sufficiently smooth. Since the linearized R13 system exhibits an elliptic structure similar to the Stokes system, we anticipate that analogous regularity results hold. However, proving this formally for the R13 system is non-trivial due to the complex coupling in the tensor-valued boundary conditions and requires a separate, detailed analysis that lies beyond the scope of the present study.
\end{remark}


\section{Conclusion and future work}%

In this paper, we derived the well-posedness for the weak R13 equations. The first important step was to reformulate the problem as a grouped mixed problem within the abstract LBB framework. Due to the unique tensorial structure of the equations, which also involve matrix-valued differential equations, we derived new theoretical tools, including a tensor-valued Korn inequality for the symmetric and trace-free parts of matrix derivatives. Proving the ellipticity of such operators then led to the existence of a right inverse of the matrix-valued divergence as the second crucial ingredient needed to analyze the equations. These estimates could also be used in the context of linear elasticity if, e.g., nonstandard material laws are involved, and we presented them in a general form.
\par
Our analysis is the first step towards a numerical analysis of discretization schemes for the equations. For example, in the context of mixed finite element methods \textendash\ although promising numerical progress has been made, e.g., using generalized Taylor--Hood elements in~\cite{theisenFenicsR13TensorialMixed2021} \textendash\ no known stable element pairing is yet available. In fact, as a mixed problem, continuous results do not directly transfer to the discrete case. Especially the discrete sup-condition and the handling of the discrete kernel of \(B\) are unclear for now, since the symmetry and trace-free constraints on the tensor \(\boldsymbol{\sigma}\) are challenging; see, e.g., the discussion in~\cite[Sec.\ 9.1]{boffiMixedFiniteElement2013}.
\par
Further aspects include the extension to the time-dependent case, the construction of efficient preconditioned iterative solvers, and, eventually, the solution and analysis of the complete nonlinear system. On the theoretical side, reformulating the system as a variational optimization problem, conducting alternative analyses using different groupings, using augmented formulations, and extending to higher-order moment systems are natural next steps. Such systems will require coercivity estimates for symmetric and trace-free tensor derivatives of even higher order, as well as suitable right inverses of the corresponding divergence operators.


\appendix\crefalias{section}{appendix}
\section{Continuity of the bilinear forms}\label{sec:Appendix}%
The continuity of the bilinear form $\mathcal{B}$ is straightforward. For any $\boldsymbol{\mathcal{U}}=(\boldsymbol{\sigma},\boldsymbol{s},p) \in \mathrm{V}$ and any $ \boldsymbol{\mathcal{Q}}=(\boldsymbol{v},\kappa) \in \mathrm{Q}$, we have
\begingroup{}%
\allowdisplaybreaks[1]%
\begin{align}
  |\mathcal{B}(\boldsymbol{\mathcal{U}},\boldsymbol{\mathcal{Q}})|
  &=
  |\mathcal{B}({(\boldsymbol{\sigma},\boldsymbol{s},p)},{(\boldsymbol{v},\kappa)})|
  \overset{\mathclap{\triangle\textnormal{-in.}}}{\le}
  |e(\boldsymbol{v},\boldsymbol{\sigma})|
  +
  |g(p,\boldsymbol{v})|
  +
  |b(\kappa, \boldsymbol{s})|
  \\ \tag*{}
  &=
  \left|\int_\Omega \Div\boldsymbol{\sigma} \boldsymbol{\cdot} \boldsymbol{v} \,\textnormal{d}\boldsymbol{x}\right|
  +
  \left|\int_\Omega \boldsymbol{v} \boldsymbol{\cdot} \boldsymbol{\nabla} p \,\textnormal{d}\boldsymbol{x}\right|
  +
  \left|\int_\Omega \kappa \, \div\boldsymbol{s} \,\textnormal{d}\boldsymbol{x}\right|
  \\ \tag*{}
  &\overset{\mathclap{\textnormal{CBS}}}{\le}
  {\|\Div\boldsymbol{\sigma}\|}_{\lebe^2(\Omega)} {\|\boldsymbol{v}\|}_{\lebe^2(\Omega)}
  +
  {\|\boldsymbol{v}\|}_{\lebe^2(\Omega)} {\|\boldsymbol{\nabla} p\|}_{\lebe^2(\Omega)}
  +
  {\|\kappa\|}_{\lebe^2(\Omega)} {\|\div\boldsymbol{s}\|}_{\lebe^2(\Omega)}
  \\ \tag*{}
  &\le
  {\|\boldsymbol{\sigma}\|}_{\hil^1(\Omega)} {\|\boldsymbol{v}\|}_{\lebe^2(\Omega)}
  +
  {\|\boldsymbol{v}\|}_{\lebe^2(\Omega)} {\|p\|}_{\hil^1(\Omega)}
  +
  {\|\kappa\|}_{\lebe^2(\Omega)} {\|\boldsymbol{s}\|}_{\hil^1(\Omega)}
  \\ \tag*{}
  &\le
  C {\|\boldsymbol{\mathcal{U}}\|}_{\mathrm{V}} {\|\boldsymbol{\mathcal{Q}}\|}_{\mathrm{Q}}
  ,
\end{align}
\endgroup%
where we used the norm equivalence on \(\mathbb{R}^n\) in the last step. The continuity of $\mathcal{A}$ follows similarly. We demonstrate this here, as an example, for the bilinear form \(a\) as
\begingroup{}%
\allowdisplaybreaks[1]%
\begin{align}
  \abs{a(\boldsymbol{s},\boldsymbol{r})}
  &\le
  c\,\big\{{\|\sym\D \boldsymbol{s}\|}_{\lebe^2(\Omega)}\norm{\sym \D \boldsymbol{r}}_{\lebe^2(\Omega)}+ \norm{\div \boldsymbol{s}}_{\lebe^2(\Omega)}\norm{\div \boldsymbol{r}}_{\lebe^2(\Omega)} 
  \\ \tag*{}
  & \hspace{1.0em} +\norm{\boldsymbol{s}}_{\lebe^2(\Omega)}\norm{\boldsymbol{r}}_{\lebe^2(\Omega)} + \norm{s_n}_{\lebe^2(\Gamma)}\norm{r_n}_{\lebe^2(\Gamma)}+ \textstyle{\sum_{i=1}^2} \norm{s_{t_i}}_{\lebe^2(\Gamma)}\norm{r_{t_i}}_{\lebe^2(\Gamma)}\big\} 
  \\ \tag*{}
  & \le
  c_1\,\big\{{\|\D\boldsymbol{s}\|}_{\lebe^2(\Omega)}\norm{\D\boldsymbol{r}}_{\lebe^2(\Omega)}+\norm{\boldsymbol{s}}_{\lebe^2(\Omega)}\norm{\boldsymbol{r}}_{\lebe^2(\Omega)}+\norm{\boldsymbol{s}}_{\lebe^2(\Gamma)}\norm{\boldsymbol{r}}_{\lebe^2(\Gamma)}\big\}
  \\ \tag*{}
  &\le
  c_2\norm{\boldsymbol{s}}_{\hil^1(\Omega)}\norm{\boldsymbol{r}}_{\hil^1(\Omega)}
  ,
\end{align}
\endgroup%
where the last step used the trace theorem~\cite[p.~42]{Braess} to estimate the $\lebe^2$-norm on \(\Gamma\).

\section*{Acknowledgments}%
LT would like to thank Matthias Kirchhart for many fruitful discussions on mixed finite element methods back in 2022. MT thanks Jan Giesselmann and Tabea Tscherpel for valuable discussions. PL is thankful for the kind invitation to the RWTH Aachen, where parts of the project were concluded. We also thank the reviewers for their valuable comments and suggestions, which have greatly improved the paper.

\apptocmd{\sloppy}{\vbadness10000\relax}{}{}
\apptocmd{\sloppy}{\hbadness10000\relax}{}{}
\bibliographystyle{siamplain}
\bibliography{refs}

\begin{thebibliography}{10}

\bibitem{abdelmalikMomentClosureApproximations2016}
{\sc M.~R.~A. Abdelmalik and E.~H. Van~Brummelen}, {\em Moment {{Closure
  Approximations}} of the {{Boltzmann Equation Based}} on
  {$\varphi$}-{{Divergences}}}, J. Stat. Phys., 164 (2016), pp.~77--104,
  \url{https://doi.org/10.1007/s10955-016-1529-5}.

\bibitem{KnudsenPump2}
{\sc H.~Akhlaghi, E.~Roohi, and S.~Stefanov}, {\em A comprehensive review on
  micro- and nano-scale gas flow effects: Slip-jump phenomena, knudsen paradox,
  thermally-driven flows, and knudsen pumps}, Phys. Rep., 997 (2023),
  pp.~1--60, \url{https://doi.org/10.1016/j.physrep.2022.10.004}.

\bibitem{AmroucheLionslemma}
{\sc C.~Amrouche, P.~G. Ciarlet, and C.~Mardare}, {\em On a lemma of
  {Jacques}-{Louis} {Lions} and its relation to other fundamental results}, J.
  Math. Pures Appl. (9), 104 (2015), pp.~207--226,
  \url{https://doi.org/10.1016/j.matpur.2014.11.007}.

\bibitem{AmroucheGirault}
{\sc C.~Amrouche and V.~Girault}, {\em Decomposition of vector spaces and
  application to the {Stokes} problem in arbitrary dimension}, Czech. Math. J.,
  44 (1994), pp.~109--140, \url{https://doi.org/10.21136/CMJ.1994.128452}.

\bibitem{aronszajnCoerciveIntegrodifferentialQuadratic1955}
{\sc N.~Aronszajn}, {\em {On coercive integro-differential quadratic forms}},
  in {Conference on Partial Differential Equations, Summer 1954}, Univ. Kansas,
  1955, pp.~94--106.

\bibitem{boffiMixedFiniteElement2013}
{\sc D.~Boffi, F.~Brezzi, and M.~Fortin}, {\em Mixed {{Finite Element Methods}}
  and {{Applications}}}, vol.~44 of Springer {{Series}} in {{Computational
  Mathematics}}, Springer, Berlin, Heidelberg, 2013,
  \url{https://doi.org/10.1007/978-3-642-36519-5}.

\bibitem{Braess}
{\sc D.~Braess}, {\em Finite {Elemente}. {Theorie}, schnelle {L{\"o}ser} und
  {Anwendungen} in der {Elastizit{\"a}tstheorie}}, Springer-Lehrb. Mastercl.,
  Springer Spektrum, Berlin, 5th revised~ed., 2013,
  \url{https://doi.org/10.1007/978-3-642-34797-9}.

\bibitem{brennerMathematicalTheoryFinite2008}
{\sc S.~C. Brenner and L.~R. Scott}, {\em The Mathematical Theory of Finite
  Element Methods}, no.~15 in Texts in Applied Mathematics, Springer, New York,
  3rd~ed., 2008, \url{https://doi.org/10.1007/978-0-387-75934-0}.

\bibitem{brezziMixedHybridFinite1991}
{\sc F.~Brezzi and M.~Fortin}, {\em Mixed and Hybrid Finite Element Methods},
  no.~15 in Springer Series in Computational Mathematics, Springer, New York,
  1991, \url{https://doi.org/10.1007/978-1-4612-3172-1}.

\bibitem{bungerStructuredDerivationMoment2023}
{\sc J.~B{\"u}nger, E.~Christhuraj, A.~Hanke, and M.~Torrilhon}, {\em
  Structured derivation of moment equations and stable boundary conditions with
  an introduction to symmetric, trace-free tensors}, KRM, 16 (2023),
  pp.~458--494, \url{https://doi.org/10.3934/krm.2022035}.

\bibitem{caiLinearRegularized13Moment2024}
{\sc Z.~Cai, M.~Torrilhon, and S.~Yang}, {\em Linear {{Regularized}}
  13-{{Moment Equations}} with {{Onsager Boundary Conditions}} for {{General
  Gas Molecules}}}, SIAM J. Appl. Math.,  (2024), pp.~215--245,
  \url{https://doi.org/10.1137/23M1556472}.

\bibitem{carstensenNumericalExperimentsArnoldWinther2012}
{\sc C.~Carstensen, J.~Gedicke, and E.-J. Park}, {\em Numerical {{Experiments}}
  for the {{Arnold--Winther Mixed Finite Elements}} for the {{Stokes
  Problem}}}, SIAM J. Sci. Comput., 34 (2012), pp.~A2267--A2287,
  \url{https://doi.org/10.1137/100802906}.

\bibitem{cattabrigaSuProblemaContorno1961}
{\sc L.~Cattabriga}, {\em {Su un problema al contorno relativo al sistema di
  equazioni di Stokes}}, Rend. Sem. Mat. Univ. Padova, 31 (1961), pp.~308--340.

\bibitem{ciarletKornsInequality2010}
{\sc P.~G. Ciarlet}, {\em On {{Korn}}'s inequality}, Chin. Ann. Math. Ser. B,
  31 (2010), pp.~607--618, \url{https://doi.org/10.1007/s11401-010-0606-3}.

\bibitem{dainGeneralizedKornInequality2006a}
{\sc S.~Dain}, {\em Generalized {Korn}'s inequality and conformal {Killing}
  vectors}, Calc. Var. Partial Differ. Equ., 25 (2006), pp.~535--540,
  \url{https://doi.org/10.1007/s00526-005-0371-4}.

\bibitem{duvautInequationsMecaniquePhysique1972}
{\sc G.~Duvaut and J.-L. Lions}, {\em {{Les in{\'e}quations en m{\'e}canique et
  en physique}}}, Travaux et Recherches Math{\'e}matiques 21, Dunod, Paris,
  1972.

\bibitem{farhloulFiniteElementAnalysis2025}
{\sc M.~Farhloul, N.~Fall, I.~Dione, and S.~L{\'e}ger}, {\em Finite element
  analysis of a pseudostress--pressure--velocity formulation of the stationary
  {{Navier}}--{{Stokes}} equations}, J. Comput. Appl. Math., 469 (2025),
  p.~116665, \url{https://doi.org/10.1016/j.cam.2025.116665}.

\bibitem{frieseckeTheoremGeometricRigidity2002}
{\sc G.~Friesecke, R.~D. James, and S.~M{\"u}ller}, {\em A theorem on geometric
  rigidity and the derivation of nonlinear plate theory from three-dimensional
  elasticity}, Communications on Pure and Applied Mathematics, 55 (2002),
  pp.~1461--1506, \url{https://doi.org/10.1002/cpa.10048}.

\bibitem{garroniGradientTheoryPlasticity2010}
{\sc A.~Garroni, G.~Leoni, and M.~Ponsiglione}, {\em Gradient theory for
  plasticity via homogenization of discrete dislocations}, J. Eur. Math. Soc.
  (JEMS), 12 (2010), pp.~1231--1266, \url{https://doi.org/10.4171/jems/228}.

\bibitem{gaticaAnalysisVelocityPressure2010}
{\sc G.~N. Gatica, A.~M{\'a}rquez, and M.~A. S{\'a}nchez}, {\em Analysis of a
  velocity--pressure--pseudostress formulation for the stationary {{Stokes}}
  equations}, Comput. Methods Appl. Mech. Eng., 199 (2010), pp.~1064--1079,
  \url{https://doi.org/10.1016/j.cma.2009.11.024}.

\bibitem{giraultFiniteElementMethods1986}
{\sc V.~Girault and P.-A. Raviart}, {\em Finite {{Element Methods}} for
  {{Navier-Stokes Equations}}}, vol.~5 of Springer {{Series}} in
  {{Computational Mathematics}}, Springer, Berlin, Heidelberg, 1986,
  \url{https://doi.org/10.1007/978-3-642-61623-5}.

\bibitem{giustiDirectMethodsCalculus2005}
{\sc E.~Giusti}, {\em Direct Methods in the Calculus of Variations}, World
  Scientific, New Jersey, reprinted~ed., 2005.

\bibitem{GmeinederDieningPotAna24}
{\sc F.~Gmeineder and L.~Diening}, {\em Sharp trace and korn inequalities for
  differential operators}, Potential Anal.,  (2024),
  \url{https://doi.org/10.1007/s11118-024-10165-1}.

\bibitem{gmeinederPartialRegularityBV2019}
{\sc F.~Gmeineder and J.~Kristensen}, {\em Partial {{Regularity}} for {{BV
  Minimizers}}}, Arch. Rational. Mech. Anal., 232 (2019), pp.~1429--1473,
  \url{https://doi.org/10.1007/s00205-018-01346-5}.

\bibitem{gmeinederKornMaxwellSobolev2024}
{\sc F.~Gmeineder, P.~Lewintan, and P.~Neff}, {\em
  Korn--{{Maxwell}}--{{Sobolev}} inequalities for general incompatibilities},
  Math. Models Methods Appl. Sci., 34 (2024), pp.~523--570,
  \url{https://doi.org/10.1142/S0218202524500088}.

\bibitem{gmeinederLimitingKornMaxwellSobolevInequalities2024}
{\sc F.~Gmeineder, P.~Lewintan, and J.~V. Schaftingen}, {\em Limiting
  {{Korn-Maxwell-Sobolev}} inequalities for general incompatibilities}, 2024,
  \url{https://doi.org/10.48550/arXiv.2405.10349}.

\bibitem{GRVS}
{\sc F.~Gmeineder, B.~Rait{\u{a}}, and J.~Van~Schaftingen}, {\em On limiting
  trace inequalities for vectorial differential operators}, Indiana Univ. Math.
  J., 70 (2021), pp.~2133--2176,
  \url{https://doi.org/10.1512/iumj.2021.70.8682}.

\bibitem{gmeinederKornMaxwellSobolevInequalities2021}
{\sc F.~Gmeineder and D.~Spector}, {\em On {{Korn-Maxwell-Sobolev}}
  inequalities}, J. Math. Anal. Appl., 502 (2021), p.~125226,
  \url{https://doi.org/10.1016/j.jmaa.2021.125226}.

\bibitem{gradKineticTheoryRarefied1949}
{\sc H.~Grad}, {\em On the kinetic theory of rarefied gases}, Comm. Pure. Appl.
  Math., 2 (1949), pp.~331--407, \url{https://doi.org/10.1002/cpa.3160020403}.

\bibitem{guComputationalStrategyRegularized2007}
{\sc X.-J. Gu and D.~R. Emerson}, {\em A computational strategy for the
  regularized 13 moment equations with enhanced wall-boundary conditions}, J.
  Comput. Phys., 225 (2007), pp.~263--283,
  \url{https://doi.org/10.1016/j.jcp.2006.11.032}.

\bibitem{guHighorderMomentApproach2009}
{\sc X.-J. Gu and D.~R. Emerson}, {\em A high-order moment approach for
  capturing non-equilibrium phenomena in the transition regime}, J. Fluid
  Mech., 636 (2009), pp.~177--216,
  \url{https://doi.org/10.1017/S002211200900768X}.

\bibitem{GuptaGianGasPump}
{\sc N.~K. Gupta and Y.~B. Gianchandani}, {\em Thermal transpiration in
  zeolites: A mechanism for motionless gas pumps}, Appl. Phys. Lett., 93
  (2008), p.~193511, \url{https://doi.org/10.1063/1.3025304}.

\bibitem{himanshiGeneralizedFundamentalSolution2026}
{\sc {Himanshi}, L.~Theisen, A.~S. Rana, M.~Torrilhon, and V.~K. Gupta}, {\em A
  generalized fundamental solution technique for the regularized 13-moment
  system in rarefied gas flows}, J. Comput. Phys., 549 (2026), p.~114591,
  \url{https://doi.org/10.1016/j.jcp.2025.114591}.

\bibitem{horganKornsInequalitiesTheir1995}
{\sc C.~O. Horgan}, {\em Korn's {{Inequalities}} and {{Their Applications}} in
  {{Continuum Mechanics}}}, SIAM Rev., 37 (1995), pp.~491--511,
  \url{https://doi.org/10.1137/1037123}.

\bibitem{SatelliteControl}
{\sc T.~Kataoka, M.~Tsutahara, K.~Ogawa, Y.~Yamamoto, M.~Shoji, and Y.~Sakai},
  {\em Knudsen pump and its possibility of application to satellite control},
  Theor. Appl. Mech. Jpn., 53 (2004), pp.~155--161,
  \url{https://doi.org/10.11345/nctam.53.155}.

\bibitem{kornUberEinigeUngleichungen1909}
{\sc A.~Korn}, {\em {\"U}ber einige {{Ungleichungen}}, welche in der
  {{Theorie}} der elastischen und elektrischen {{Schwingungen}} eine {{Rolle}}
  spielen}, Bull. Int. Cracovie Akademie Umiejet, Classe des Sci. Math. Nat
  (deuxi{\`e}me semestre), 9 (1909), pp.~705--724.

\bibitem{lepeMixedMethodsVelocityPressurePseudostress2022}
{\sc F.~Lepe, G.~Rivera, and J.~Vellojin}, {\em Mixed {{Methods}} for the
  {{Velocity-Pressure-Pseudostress Formulation}} of the {{Stokes Eigenvalue
  Problem}}}, SIAM J. Sci. Comput., 44 (2022), pp.~A1358--A1380,
  \url{https://doi.org/10.1137/21M1402959}.

\bibitem{lewintanKornInequalitiesIncompatible2021}
{\sc P.~Lewintan, S.~M{\"u}ller, and P.~Neff}, {\em Korn inequalities for
  incompatible tensor fields in three space dimensions with conformally
  invariant dislocation energy}, Calc. Var. Partial Differ. Equ., 60 (2021),
  p.~150, \url{https://doi.org/10.1007/s00526-021-02000-x}.

\bibitem{LewNeffKornRev}
{\sc P.~Lewintan and P.~Neff}, {\em Ne{\v{c}}as-lions lemma revisited: an
  {{\(L^{p}\)}}-version of the generalized {Korn} inequality for incompatible
  tensor fields}, Math. Methods Appl. Sci., 44 (2021), pp.~11392--11403,
  \url{https://doi.org/10.1002/mma.7498}.

\bibitem{linTimedependentRegularised13moment2025}
{\sc B.~Lin, H.~Wang, S.~Yang, and Z.~Cai}, {\em Time-dependent regularised
  13-moment equations with {{Onsager}} boundary conditions in the linear
  regime}, J. Fluid Mech., 1009 (2025), p.~A60,
  \url{https://doi.org/10.1017/jfm.2025.215}.

\bibitem{lockerbyFundamentalSolutionsMoment2016}
{\sc D.~A. Lockerby and B.~Collyer}, {\em Fundamental solutions to moment
  equations for the simulation of microscale gas flows}, J. Fluid Mech., 806
  (2016), pp.~413--436, \url{https://doi.org/10.1017/jfm.2016.606}.

\bibitem{martinAcousticScatteringRarefied2020}
{\sc P.~A. Martin}, {\em Acoustic scattering in a rarefied gas: {{Solving}} the
  {{R13}} equations in spherical polar coordinates}, Math. Methods Appl. Sci.,
  43 (2020), pp.~8906--8929, \url{https://doi.org/10.1002/mma.6585}.

\bibitem{necasNormesEquivalentesDans1966}
{\sc J.~Ne{\v c}as}, {\em Sur les normes {\'e}quivalentes dans
  {{$W^{(k)}_{p}(\Omega)$}} et sur la coercivit{\'e} des formes formellement
  positives}, S{\'e}minaire Equations aux D{\'e}riv{\'e}es partielles, les
  presses de l'Universit{\'e} de Montr{\'e}al,  (1966), pp.~102--128.

\bibitem{neffKornsFirstInequality2002}
{\sc P.~Neff}, {\em On {{Korn}}'s first inequality with non-constant
  coefficients}, Proc. R. Soc. Edinb. A: Math., 132 (2002), pp.~221--243,
  \url{https://doi.org/10.1017/S0308210500001591}.

\bibitem{neffUnifyingPerspectiveRelaxed2014}
{\sc P.~Neff, I.-D. Ghiba, A.~Madeo, L.~Placidi, and G.~Rosi}, {\em A unifying
  perspective: The relaxed linear micromorphic continuum}, Contin. Mech.
  Thermodyn., 26 (2014), pp.~639--681,
  \url{https://doi.org/10.1007/s00161-013-0322-9}.

\bibitem{payneKornsInequality1961}
{\sc L.~E. Payne and H.~F. Weinberger}, {\em On {{Korn}}'s inequality}, Arch.
  Ration. Mech. Anal., 8 (1961), pp.~89--98,
  \url{https://doi.org/10.1007/BF00277432}.

\bibitem{penatiNewMixedMethod2019}
{\sc M.~Penati and E.~Miglio}, {\em A {{New Mixed Method}} for the {{Stokes
  Equations Based}} on {{Stress-Velocity-Vorticity Formulation}}}, J. Math.
  Study, 52 (2019), pp.~299--319,
  \url{https://doi.org/10.4208/jms.v52n3.19.05}.

\bibitem{pompeKornsFirstInequality2003}
{\sc W.~Pompe}, {\em Korn's {{First Inequality}} with variable coefficients and
  its generalization}, Comment. Math. Univ. Carol., 44 (2003), pp.~57--70.

\bibitem{ranaRobustNumericalMethod2013}
{\sc A.~Rana, M.~Torrilhon, and H.~Struchtrup}, {\em A robust numerical method
  for the {{R13}} equations of rarefied gas dynamics: {{Application}} to lid
  driven cavity}, J. Comput. Phys., 236 (2013), pp.~169--186,
  \url{https://doi.org/10.1016/j.jcp.2012.11.023}.

\bibitem{ranaHtheoremBoundaryConditions2021}
{\sc A.~S. Rana, V.~K. Gupta, J.~E. Sprittles, and M.~Torrilhon}, {\em
  H-theorem and boundary conditions for the linear {{R26}} equations:
  Application to flow past an evaporating droplet}, J. Fluid Mech., 924 (2021),
  pp.~A16:1--A16:40, \url{https://doi.org/10.1017/jfm.2021.622}.

\bibitem{ranaThermodynamicallyAdmissibleBoundary2016}
{\sc A.~S. Rana and H.~Struchtrup}, {\em Thermodynamically admissible boundary
  conditions for the regularized 13 moment equations}, Phys. Fluids, 28 (2016),
  p.~027105, \url{https://doi.org/10.1063/1.4941293}.

\bibitem{Reshetnyak}
{\sc Y.~G. Reshetnyak}, {\em Estimates for certain differential operators with
  finite-dimensional kernel}, Sib. Math. J., 11 (1970), pp.~315--326,
  \url{https://doi.org/10.1007/BF00967305}.

\bibitem{smithFormulasRepresentFunctions1970}
{\sc K.~T. Smith}, {\em Formulas to represent functions by their derivatives},
  Math. Ann., 188 (1970), pp.~53--77, \url{https://doi.org/10.1007/BF01435415}.

\bibitem{struchtrupMacroscopicTransportEquations2005}
{\sc H.~Struchtrup}, {\em Macroscopic Transport Equations for Rarefied Gas
  Flows}, Interaction of {{Mechanics}} and {{Mathematics}}, Springer, Berlin,
  2005, \url{https://doi.org/10.1007/3-540-32386-4}.

\bibitem{struchtrupRegularizationGrads132003}
{\sc H.~Struchtrup and M.~Torrilhon}, {\em Regularization of {{Grad}}'s 13
  moment equations: {{Derivation}} and linear analysis}, Phys. Fluids, 15
  (2003), pp.~2668--2680, \url{https://doi.org/10.1063/1.1597472}.

\bibitem{struchtrupTheoremRegularizationBoundary2007}
{\sc H.~Struchtrup and M.~Torrilhon}, {\em H {{Theorem}}, {{Regularization}},
  and {{Boundary Conditions}} for {{Linearized}} 13 {{Moment Equations}}},
  Phys. Rev. Lett., 99 (2007), p.~014502,
  \url{https://doi.org/10.1103/PhysRevLett.99.014502}.

\bibitem{struchtrupRegularized13Moment2013}
{\sc H.~Struchtrup and M.~Torrilhon}, {\em Regularized 13 moment equations for
  hard sphere molecules: {{Linear}} bulk equations}, Phys. Fluids, 25 (2013),
  p.~052001, \url{https://doi.org/10.1063/1.4802041}.

\bibitem{taheriCouettePoiseuilleMicroflows2009}
{\sc P.~Taheri, M.~Torrilhon, and H.~Struchtrup}, {\em Couette and
  {{Poiseuille}} microflows: {{Analytical}} solutions for regularized 13-moment
  equations}, Phys. Fluids, 21 (2009), pp.~017102/1--017102/11,
  \url{https://doi.org/10.1063/1.3064123}.

\bibitem{theisenFenicsR13TensorialMixed2020}
{\sc L.~Theisen and M.~Torrilhon}, {\em {{fenicsR13}}: {{A Tensorial Mixed
  Finite Element Solver}} for the {{Linear R13 Equations Using}} the {{FEniCS
  Computing Platform}} (v1.4)}.
\newblock Zenodo, 2020, \url{https://doi.org/10.5281/zenodo.4172951}.

\bibitem{theisenFenicsR13TensorialMixed2021}
{\sc L.~Theisen and M.~Torrilhon}, {\em {{fenicsR13}}: {{A Tensorial Mixed
  Finite Element Solver}} for the {{Linear R13 Equations Using}} the {{FEniCS
  Computing Platform}}}, ACM Trans. Math. Softw., 47 (2021), pp.~17:1--17:29,
  \url{https://doi.org/10.1145/3442378}.

\bibitem{torrilhonModelingNonequilibriumGas2016}
{\sc M.~Torrilhon}, {\em Modeling {{Nonequilibrium Gas Flow Based}} on {{Moment
  Equations}}}, Annu. Rev. Fluid Mech., 48 (2016), pp.~429--458,
  \url{https://doi.org/10.1146/annurev-fluid-122414-034259}.

\bibitem{torrilhonHierarchicalBoltzmannSimulations2017}
{\sc M.~Torrilhon and N.~Sarna}, {\em Hierarchical {{Boltzmann}} simulations
  and model error estimation}, J. Comput. Phys., 342 (2017), pp.~66--84,
  \url{https://doi.org/10.1016/j.jcp.2017.04.041}.

\bibitem{torrilhonBoundaryConditionsRegularized2008}
{\sc M.~Torrilhon and H.~Struchtrup}, {\em Boundary conditions for regularized
  13-moment-equations for micro-channel-flows}, J. Comput. Phys., 227 (2008),
  pp.~1982--2011, \url{https://doi.org/10.1016/j.jcp.2007.10.006}.

\bibitem{KnudsenPump1}
{\sc X.~Wang, T.~Su, W.~Zhang, Z.~Zhang, and S.~Zhang}, {\em Knudsen pumps: a
  review}, Microsyst. Nanoeng., 6 (2020), pp.~1--28,
  \url{https://doi.org/10.1038/s41378-020-0135-5}.

\bibitem{wenigerConformingInterfaceApproach2025}
{\sc D.~Weniger and M.~Torrilhon}, {\em A conforming interface approach for
  phase transitions in rarefied gas dynamics}, J. Comput. Phys., 542 (2025),
  p.~114376, \url{https://doi.org/10.1016/j.jcp.2025.114376}.

\bibitem{westerkampContinuousInteriorPenalty2017}
{\sc A.~Westerkamp}, {\em A {{Continuous Interior Penalty Method}} for the
  {{Linear Regularized}} 13-{{Moment Equations}} Describing {{Rarefied Gas
  Flows}}}, PhD thesis, RWTH Aachen University, 2017,
  \url{https://doi.org/10.18154/RWTH-2018-222958}.

\bibitem{westerkampFiniteElementMethods2019}
{\sc A.~Westerkamp and M.~Torrilhon}, {\em Finite element methods for the
  linear regularized 13-moment equations describing slow rarefied gas flows},
  J. Comput. Phys., 389 (2019), pp.~1--21,
  \url{https://doi.org/10.1016/j.jcp.2019.03.022}.

\end{thebibliography}

\end{document}